\documentclass[10pt, a4paper]{amsart}
\usepackage[
bookmarks=true,
colorlinks=true,
linkcolor=blue,
urlcolor=blue,
bookmarks=true,
citecolor=blue,
hyperindex=true
]{hyperref}
\usepackage{amsmath}
\numberwithin{equation}{section}
\usepackage{amsthm}
\usepackage{amssymb}
\usepackage[abbrev]{amsrefs}
\usepackage{bm}

\usepackage{enumerate}
\usepackage{enumitem}
\usepackage{subfigure}
\usepackage{graphicx}

\makeatletter
\let\orgdescriptionlabel\descriptionlabel
\renewcommand*{\descriptionlabel}[1]{%
  \let\orglabel\label
  \let\label\@gobble
  \phantomsection
  \edef\@currentlabel{#1}%
  \let\label\orglabel
  \orgdescriptionlabel{#1}%
}
\makeatother

\usepackage{indentfirst}
\usepackage{geometry}
\usepackage{marginnote}

\newtheorem{theorem}{Theorem}[section]
\newtheorem{theoremx}{Theorem}

\newtheorem{proposition}[theorem]{Proposition}
\newtheorem{example}[theorem]{Example}

\newtheorem{corollary}[theorem]{Corollary}
\newtheorem{lemma}[theorem]{Lemma}

\newtheorem{remark}[theorem]{Remark}
\newtheorem*{remark*}{Remark}
\newtheorem{definition}[theorem]{Definition}

\newcommand*{\cE}{\mathcal{E}}
\newcommand*{\fF}{\mathfrak{F}}

\newcommand*{\cK}{\mathcal{K}}
\newcommand*{\M}{\mathcal{M}}

\newcommand*{\cX}{\mathcal{X}}

\newcommand*{\Rb}{\mathbb{R}}
\newcommand*{\Zb}{\mathbb{Z}}
\newcommand*{\Nb}{\mathbb{N}}

\newcommand*{\dif}{\mathop{}\!\mathrm{d}}

\newcommand*{\Merg}{\mathcal{M}_{\mathrm{erg}}}
\newcommand*{\htop}{h_{\mathrm{top}}}
\newcommand*{\lh}{\underline{h}}
\newcommand*{\ve}{\varepsilon}

\newcommand*{\Orb}{\mathrm{Orb}}

\newcommand*{\Trans}{\mathrm{Trans}}

\newcommand*{\diam}{\mathrm{diam}}

\newcommand*{\supp}{\mathrm{supp}}
\newcommand*{\Sing}{\mathrm{Sing}}
\newcommand*{\Int}{\mathrm{Int}}

\newcommand{\Wuloc}{W^u_{\mathrm{loc}}}
\newcommand{\Wsloc}{W^s_{\mathrm{loc}}}

\let\oldtocsection=\tocsection

\let\oldtocsubsection=\tocsubsection

\let\oldtocsubsubsection=\tocsubsubsection

\renewcommand{\tocsection}[2]{\hspace{0em}\oldtocsection{#1}{#2}}
\renewcommand{\tocsubsection}[2]{\hspace{1em}\oldtocsubsection{#1}{#2}}
\renewcommand{\tocsubsubsection}[2]{\hspace{2em}\oldtocsubsubsection{#1}{#2}}

\title[]{Topological entropy of saturated sets for Lorenz attractors}
\author{Zheng Huang}
\address{School of Mathematical Sciences, CMA-Shanghai, Shanghai Jiao Tong University, \newline \hspace*{\parindent} Shanghai, 200240, China}
\email{hzxs280457@sjtu.edu.cn}

\author{Xueting Tian}
\address{School of Mathematical Sciences, Fudan University, Shanghai, 200433, China}
\email{xuetingtian@fudan.edu.cn}

\author{Xiaodong Wang}
\address{School of Mathematical Sciences, CMA-Shanghai, Shanghai Jiao Tong University, \newline \hspace*{\parindent} Shanghai, 200240, China}
\email{xdwang1987@sjtu.edu.cn}

\author{Xutong Zhao}
\address{School of Mathematical Sciences, CMA-Shanghai, Shanghai Jiao Tong University, \newline \hspace*{\parindent} Shanghai, 200240, China}
\email{xutongzhao@sjtu.edu.cn}

\thanks{Z. Huang, X. Wang, and X. Zhao were supported by the National Key R\&D Program of China (No. 2021YFA1001900).  }
\thanks{X. Tian was supported by the National Natural Science Foundation of China (No. 12471182).}

\begin{document}
\begin{abstract}
    We study the topological entropy of saturated sets for continuous flows,
    with emphasis on singular hyperbolic dynamics. 
    For a $C^r$-generic
    ($r \geq 2$) vector field $X$ carrying a geometric Lorenz attractor $\Lambda$,
    we prove that for every nonempty compact connected set $\mathcal{K} \subseteq \M(\Phi,\Lambda)$ and every open set $U \subseteq M$ such that $U \cap \Lambda \neq \emptyset$, 
    \[
        \htop\left(\Phi, G_{\mathcal{K}}(\Phi) \cap \Trans(\Phi,\Lambda) \cap U\right)
        = \inf_{\mu \in \mathcal{K}} h_\mu(\phi_{1}),
    \]
    where $\Phi=(\phi_{t})_{t \in \mathbb{R}}$ is the flow induced by $X$. Here, \(\Trans(\Phi,\Lambda)\) denotes the set of points whose positive orbits are dense in \(\Lambda\). The same formula holds $C^1$-generically for singular hyperbolic
    attractors.  In contrast with the uniformly hyperbolic case, we also prove that, for vector fields in a dense subset of $\mathcal{X}^{r}(M^3)$, geometric Lorenz attractors admit a positive-entropy invariant measure with no generic points. An analogous conclusion holds for vector fields in a dense subset of $\mathcal{X}^{1}(M)$ whose non-trivial singular attractors contain singularities and satisfy $\dim E^{cu}=2$. These examples show that the discrete-time saturated entropy theory does not extend directly to singular flows.
\end{abstract}

\maketitle
\tableofcontents

\section{Introduction}
Multifractal analysis of dynamical systems studies the size and complexity of sets of points with asymptotic statistical behaviors. A central object in this theory is the empirical measure, which records the average distribution of an orbit over time.
Let $\Phi=(\phi_t)_{t\in\mathbb R}$ be a continuous flow on a compact metric space $K$, and let $\Lambda\subseteq K$ be a compact $\Phi$-invariant set. Let $\mathcal M(\Lambda)$ denote the space of Borel probability measures supported on $\Lambda$, endowed with the weak* topology.
We denote by $\M(\Phi,\Lambda)$ and $\Merg(\Phi,\Lambda)$ the sets of $\Phi$-invariant and $\Phi$-ergodic probability measures supported on $\Lambda$, respectively. For $x\in\Lambda$ and $t>0$, the \emph{empirical measure} of $x$ up to time $t$ is defined by
$$ \mathcal{E}_t(x,\Phi)=\frac1t\int_0^t\delta_{\phi_s(x)}\,ds, $$
where $\delta_y$ denotes the Dirac probability measure supported at $y$.
Let $V_\Phi(x)$ be the set of accumulation points of $\mathcal{E}_t(x,\Phi)$. For a nonempty compact connected subset $\mathcal{K} \subset \mathcal M(\Phi,\Lambda)$, the set
\[
    G_{\mathcal{K}}(\Phi)=\{x\in\Lambda: V_\Phi(x)=\mathcal{K}\}
\]
is called the \emph{saturated set} of $\mathcal{K}$.

When $\mathcal{K}=\{\mu\}$, the set $G_{\mathcal{K}}=G_{\mu}$ consists of generic points of the invariant measure $\mu$; for general $\mathcal{K}$, it consists of points whose empirical measures oscillate exactly inside $\mathcal{K}$.

The entropy theory of noncompact sets goes back to Bowen \cite{bowen1973topological}, who introduced a Carathéodory-type definition of topological entropy for arbitrary subsets. This point of view has become a central tool in the study of irregular sets, level sets, and saturated sets. 
For discrete-time systems with specification (more generally, the $g$-almost product property) together with the uniform separation property, Pfister and Sullivan \cite{pfister2007topological} established a general entropy formula for saturated sets: for every
nonempty compact connected set $\mathcal K$ of invariant measures,
\[
h_{\mathrm{top}}(T,G_{\mathcal K})
=
\inf_{\mu\in\mathcal K} h_\mu(T).
\]
Related variational principles for noncompact sets and measure-theoretical lower entropy were developed by Feng and Huang \cite{feng2012variational}. 
In the nonuniformly hyperbolic setting, Liang, Liao, Sun and Tian \cite{liang2016variational} established an analogous variational equality for compact connected families of invariant measures with a common hyperbolic rate. 

Huang, Tian and Wang \cite{huang2019transitively} introduced the transitively-saturated property, which strengthens the saturated property by requiring the points in the saturated set to be transitive. Under the $g$-almost product and uniform separation properties, they obtained the corresponding entropy formula for transitively-saturated sets.

For flows, however, several additional difficulties arise. First, the empirical measures are defined by continuous-time averages, while many entropy estimates, such as measure-theoretic entropy, are more naturally formulated for the time-one map.
Second, the relation of invariant measures between the flow and its time-one map is subtle, and generic points for the two systems need not behave in the same way. For example, positive entropy does not necessarily guarantee the existence of generic points for the flow. 

When specializing to smooth flows generated by vector fields, we write $\Phi^X=(\phi_t^X)_{t\in\mathbb R}$ for the flow generated by a vector field $X$. We prove in Theorem \ref{thm:C} that there exists a dense subset $\mathcal D^r\subset \mathfrak X^r(M^3)$, $r\ge 2$, such that if $\Lambda$ is a geometric Lorenz attractor of $X\in\mathcal D^r$, then there exists $\mu\in\mathcal M(\Phi^X,\Lambda)$ with $h_\mu(\phi^X_{1})>0$ and $G_\mu(\Phi^X)=\emptyset$. Thus, a direct application of the discrete-time saturated entropy theory is not sufficient.

The purpose of this paper is to establish a saturated entropy formula for singular hyperbolic flows, with special emphasis on geometric Lorenz attractors and singular hyperbolic attractors. 

The Lorenz system was introduced by Lorenz as a simplified model for atmospheric convection and has become a classical example of chaotic dynamics \cite{lorenz1963deterministic}. Later, Guckenheimer \cite{guckenheimer1976strange} and Afraimovich, Bykov and Shil'nikov \cite{afraimovich1977origin} introduced geometric models for the Lorenz attractor. These models, now known as geometric Lorenz attractors, were further studied by Guckenheimer and Williams and by Williams, who investigated their structural stability and geometric structure \cites{guckenheimer1979structural,williams1979structure}. Singular hyperbolicity was introduced by Morales, Pacifico and Pujals \cites{morales1999singular, morales2004robust} as a robust framework for Lorenz-like dynamics, and was later developed in higher-dimensional settings in \cites{li2005robustly, metzger2008sectional, zhu2008indices}. More recently, the thermodynamic formalism and equilibrium states for Lorenz and singular hyperbolic attractors have also been studied in \cite{pacifico2025equilibrium}. These systems form a natural class beyond uniformly hyperbolic flows, since singularities may be present, and the dynamics is only partially hyperbolic in a singular sense.

Our first main result concerns geometric Lorenz attractors. We prove in Theorem \ref{thm:A} that for every $r\ge 2$, there exists a residual subset $\mathcal R^r\subset \mathfrak X^r(M^3)$, such that if $\Lambda$ is a geometric Lorenz attractor of $X\in\mathcal R^r$, then for every nonempty compact connected set $\mathcal{K} \subset \mathcal M(\Phi^X,\Lambda)$ and every open set $U \subseteq M$ such that $U \cap \Lambda \neq \emptyset$,
\[
    h_{\mathrm{top}}\bigl(\Phi^X,G_{\mathcal{K}}(\Phi^X)\cap
    \mathrm{Trans}(\Phi^X,\Lambda)\cap U\bigr)
    =\inf_{\mu\in \mathcal{K}}h_\mu(\phi^X_1).
\]
Here and below, \(\Trans(\Phi,\Lambda)\) denotes transitive points of $\Phi$ in $\Lambda$. That is,
\[\Trans\left(\Phi,\Lambda\right) = \left\{x \in \Lambda:\omega_{\Phi}(x) = \Lambda\right\}.\]
Thus, the saturated set has the expected entropy even after restricting to transitive points and to an arbitrary open subset of the attractor.

Our second main result establishes the same formula for singular hyperbolic attractors in the $C^1$ generic setting, namely Theorem \ref{thm:B}.
These results extend the saturated entropy principle from uniformly hyperbolic and specification-type systems to a class of singular hyperbolic flows.

As an application of Theorems~\ref{thm:A} and~\ref{thm:B}, we give an alternative proof of the multifractal results for Lorenz-type attractors established in \cite{shi2023multifractal}. In that work, the irregular set (where time averages do not converge) and level sets (where averages equal a given value) were studied; see Theorem~\ref{thm:2.1}. With a suitable choice of $\mathcal{K} \subset \mathcal{M}(\Phi,\Lambda)$, we calculate the entropy of irregular sets and level sets using our entropy formula.

\subsection*{Main results}
Let $\cX^r(M^d)$, $r \geq 1$, denote the space of $C^r$-vector fields on a $d$-dimensional compact Riemannian manifold $M$ endowed with the $C^r$ topology. Let $\Nb, \Nb^+, \Zb, \Rb$ denote the set of natural numbers, positive integers, integers, and real numbers, respectively. For $X\in\cX^r(M)$, let $\Phi^X=(\phi_t^X)_{t\in\mathbb R}$ denote the $C^r$ flow generated by $X$. When $X$ is clear, we simply write $\Phi=(\phi_t)_{t\in\mathbb R}$. For $\mu\in\M(\Phi,\Lambda)$, the measure-theoretic entropy of the flow is understood throughout as $h_\mu(\phi_1)$, the entropy of its time-one map.

We use the notation $\mathcal{E}_t(x,\Phi)$, $V_\Phi(x)$ and $G_{\mathcal{K}}(\Phi)$ introduced above. Throughout the main results, $\mathcal{K}$ will be a nonempty compact connected subset of $\mathcal M(\Phi,\Lambda)$.

Our first and main result asserts that, for a $C^r$-generic vector field, the saturated entropy formula holds on a geometric Lorenz attractor, and persists after intersecting with the transitive points and with an arbitrary open set.

\begin{theoremx}\label{thm:A}
    For every $r \geq 2$, there exists a residual subset $\mathcal{R}^r \subset \cX^r\left(M^3\right)$ such that, if $\Lambda$ is a geometric Lorenz attractor of $X \in \mathcal{R}^r$, then for any nonempty compact connected subset $\mathcal{K}$ of $\M(\Phi, \Lambda)$, we have
    \[\htop\left(\Phi, G_{\mathcal{K}}\left(\Phi\right)  \cap \Trans(\Phi,\Lambda) \cap U\right) = \inf\left\{h_\mu\left(\phi_1\right) : \mu \in \mathcal{K} \right\}\]
    for any nonempty open set $U \subseteq M$ with $U \cap \Lambda \ne \emptyset$.
\end{theoremx}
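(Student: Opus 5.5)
The proof splits into an upper bound and a lower bound, and the lower bound is where the generic hypothesis enters.

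\emph{Upper bound.} It suffices to show $\htop(\Phi, G_{\mathcal K}(\Phi)) \le \inf_{\mu\in\mathcal K} h_\mu(\phi_1)$. Take any $x \in G_{\mathcal K}(\Phi)$ and any $\mu\in\mathcal K$. Then $\mu$ is an accumulation point of $\mathcal E_t(x,\Phi)$. Restricting to integer times $t=n$ and using $\mathcal E_n(x,\Phi)=\frac1n\sum_{k<n}(\phi_k)_*\mathcal E_1(x,\Phi)$, we see that $\mu$ is also a limit point of the discrete empirical measures of the time-one map applied to the smeared point measures. We then run Bowen's classical argument: the set of points having a given $\mu$ in the limit set of empirical measures has Bowen entropy at most $h_\mu$. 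This works for the flow because flow Bowen balls of length $n$ are comparable to $\phi_1$-Bowen balls of length $n$, up to uniform continuity on unit time intervals. Since the choice of $\mu\in\mathcal K$ was arbitrary, taking the infimum gives the bound. This step is standard and needs no genericity.

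\emph{Lower bound via horseshoes.} The plan is to approximate $\mathcal K$ by measures living on hyperbolic sets inside $\Lambda$ and then apply a Pfister--Sullivan/Huang--Tian--Wang construction. The first ingredient is a residual set $\mathcal R^r$ on which the following hold:
\begin{enumerate}[label=(\roman*)]
\item the singularity of the geometric Lorenz attractor is not accumulated in a degenerate way;
\item every invariant measure on $\Lambda$, including combinations involving the Dirac measure at the singularity $\sigma$, is approximated in the weak* topology and in entropy (in the sense of $\liminf$ from below, up to $\ve$) by measures supported on hyperbolic basic sets $\Lambda_n\subset\Lambda$ of the flow;
\item the basic sets $\Lambda_n$ can be chosen increasing, homoclinically related to one another, and such that their union is dense in $\Lambda$.
\end{enumerate}
Genericity, presumably via Kupka--Smale together with a connecting lemma applied to the singularity, would guarantee that periodic orbits approximate $\delta_\sigma$. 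This property can fail in general, and Theorem~\ref{thm:C} shows exactly that failure.

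\emph{Lower bound: building the points.} For each $n$, choose a compact connected family $\mathcal K_n\subset\M(\Phi,\Lambda_n)$ that converges to $\mathcal K$ in the Hausdorff metric and satisfies $\inf_{\mathcal K_n}h \ge \inf_{\mathcal K}h-\ve_n$. Restricted to a suspension of a subshift, the flow on $\Lambda_n$ has specification and expansiveness. Following Huang--Tian--Wang, we build a Moran-type fractal of points that shadow successive segments of the $\mathcal K_n$. The segments are arranged so that the empirical measures wander densely through $\mathcal K_n$. The sequence of families is interleaved so that the limit set of empirical measures is exactly $\mathcal K$. Between segments, the orbit makes excursions visiting finer and finer dense subsets of $\Lambda$, which forces $\omega(x)=\Lambda$. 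The first visit is made to $U$. Since the $\Lambda_n$ are homoclinically related, transitions between consecutive $\Lambda_n$ are possible with uniformly bounded gap times at each stage. An entropy distribution principle then gives Bowen entropy at least $\inf_{\mathcal K}h-\ve$.

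\emph{Main obstacle.} The hardest step is the approximation property (ii) for measures that give positive mass to $\sigma$ or accumulate on it. In that regime the entropy and the empirical statistics must be controlled simultaneously while the connecting orbits pass near $\sigma$, where time is slowed down without bound. I would first try to use the $C^r$-generic denseness of periodic orbits, homoclinically related to a fixed one, whose orbits spend a prescribed fraction of time near $\sigma$. These would be obtained by perturbing the homoclinic loop of $\sigma$ via the Lorenz one-dimensional quotient map. The resulting short periodic measures would then be glued onto high-entropy horseshoes using the shadowing lemma.
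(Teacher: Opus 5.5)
Your architecture matches the paper's: a Bowen-type upper bound; horseshoe approximation; nested, homoclinically related horseshoes with dense union; and a Moran construction with excursions that force transitivity and an initial visit to $U$. However, the lower bound rests on a false step: ``restricted to a suspension of a subshift, the flow on $\Lambda_n$ has specification.'' The flow specification property of Definition~\ref{def:specification}, with arbitrary real gaps $\ge T_\varepsilon$, implies topological mixing of the flow. (Shadow a point of $U$ and then, after any time $s\ge T_\varepsilon$, a point of $V$.) Suspensions of mixing subshifts need not be mixing. For the constant-roof suspension of Appendix~\ref{app:suspension-example}, $\phi_t(U)\cap V=\emptyset$ for arbitrarily large $t$ when $U,V$ are small sets at different fibre heights. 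By Theorem~\ref{incommensurates_implies_mixing}, a horseshoe is mixing exactly when it contains periodic orbits with incommensurate periods, and homoclinic relatedness does not provide this. The paper therefore adds a further generic condition, Lemma~\ref{lem:lorenz_residual}(4), proved by a Baire argument using positive time changes $\rho_\varepsilon X$. It then arranges that every enlarged horseshoe $\widetilde\Lambda_n$ contains two periodic orbits with incommensurate periods, so that Theorem~\ref{mixing_implies_specification} gives specification. Without this, the shadowing with prescribed transition times on which your construction depends is not available.

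A second ingredient is also missing. The entropy distribution principle requires the orbit segments chosen at every stage to be separated at a single scale $\varepsilon^*$, independent of the stage, while still numbering about $e^{t(h_{\nu}(\phi_1)-\eta)}$. Expansiveness of each $\Lambda_n$ only yields a constant depending on $n$, which may degenerate as the horseshoes approach $\sigma$. In that case the estimate $\omega(B_s(z,r,\phi_1))\le e^{-s(H-2\eta)}$ at a fixed $r$ breaks down. The paper gets this uniformity from the entropy-expansiveness of $\Lambda$ itself (Lemma~\ref{lem:lorenz_residual}(2)), through the uniform separation property with $\Gamma_n\subset\supp(\nu)$ (Lemma~\ref{lem:AEE-to-USP}).

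On the other hand, the step you single out as the main obstacle is not proved in the paper at all. The horseshoe approximation property for $C^r$-generic Lorenz attractors is imported from \cite{shi2023multifractal}*{Corollary 4.14(1)}, so your heuristic perturbation of the homoclinic loop can be replaced by that citation.

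Finally, in the upper bound, writing $\mathcal{E}_n(x,\Phi)$ as a $\phi_1$-average of a smeared measure does not by itself put $x$ into Bowen's set $QR(h_\mu(\phi_1))$. You need some $\nu\in V_{\phi_1}(x)$ with $\tau\nu=\mu$ and $h_\nu(\phi_1)=h_\mu(\phi_1)$, which is exactly what Lemma~\ref{lem:tau} provides.
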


The same conclusion persists, $C^1$-generically, in the broader class of singular hyperbolic attractors.

\begin{theoremx}\label{thm:B}
    There exists a residual subset $\mathcal{R} \subset \cX^1\left(M\right)$ such that, if $\Lambda$ is a non-trivial singular hyperbolic attractor of $X \in \mathcal{R}$, then for any nonempty compact connected subset $\mathcal{K}$ of $\M(\Phi, \Lambda)$, we have
    \[\htop\left(\Phi, G_{\mathcal{K}}\left(\Phi\right)  \cap \Trans(\Phi,\Lambda) \cap U \right) = \inf\left\{h_\mu\left(\phi_1\right) : \mu \in \mathcal{K} \right\}\]
    for any nonempty open set $U \subseteq M$ with $U \cap \Lambda \ne \emptyset$.
\end{theoremx}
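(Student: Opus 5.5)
The plan is to prove the two inequalities separately, with the upper bound being general and the lower bound relying on genericity. For the upper bound I use the flow analogue of Bowen's classical estimate: if $x\in G_{\mathcal K}(\Phi)$ then every $\mu\in\mathcal K$ lies in $V_\Phi(x)$. Hence $G_{\mathcal K}(\Phi)\subseteq\{x:V_\Phi(x)\ni\mu\}$, and the Bowen topological entropy of the latter set is at most $h_\mu(\phi_1)$. I would prove this through the time-one map. Since $\mathcal E_n(x,\phi_1)$ averaged along $[0,1]$ is comparable to $\mathcal E_n(x,\Phi)$, accumulation of $\mathcal E_t(x,\Phi)$ at $\mu$ yields accumulation of the discrete empirical measures of $\phi_1$ at some $\phi_1$-invariant $\nu$. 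Here $\nu$ averages to $\mu$ under $\int_0^1(\phi_s)_*\,ds$, and $h_\nu(\phi_1)=h_\mu(\phi_1)$ by flow invariance of entropy along $\phi_s$. Bowen's lemma for $\phi_1$, together with the standard comparison $\htop(\Phi,Z)=\htop(\phi_1,Z)$ for Bowen entropy of arbitrary sets, then gives $\htop(\Phi,G_{\mathcal K})\le\inf_{\mu\in\mathcal K}h_\mu(\phi_1)$. Intersecting with $\Trans\cap U$ only decreases the entropy.

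For the lower bound I would follow the Pfister–Sullivan / Huang–Tian–Wang scheme in its transitively-saturated form, adapted to flows. The key structural input I would invoke from the genericity assumption has three parts. First, $\Lambda$ is a homoclinic class, and for $C^1$-generic $X$ (or $C^r$-generic Lorenz attractors, $r\ge2$) every invariant measure on $\Lambda$ is approximated, in the weak$^*$ topology and in entropy, by measures supported on horseshoes $\Lambda_k\subset\Lambda$. These horseshoes are hyperbolic sets away from the singularity, and they contain periodic orbits dense in $\Lambda$. Second, the horseshoes can be chosen nested and increasing and homoclinically related, so any finitely many of them lie in a single larger horseshoe. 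Third, on each horseshoe the flow satisfies specification (after reparametrization to the suspension over a subshift) and expansiveness, hence uniform separation. Using the first and second parts, I choose for the compact connected $\mathcal K$ a sequence of finite chains $\{\mu_{k,1},\dots,\mu_{k,m_k}\}$ that are $1/k$-dense in $\mathcal K$ in Hausdorff distance and consecutively $1/k$-close. Each $\mu_{k,j}$ is a measure on a horseshoe $\Lambda_k$ with $h_{\mu_{k,j}}(\phi_1)>\inf_{\mathcal K}h-\varepsilon$. This entropy approximation from below is the delicate point, since measures of $\mathcal K$ may charge the singularity and have lower entropy. I would handle it by mixing each member with a small multiple of a high-entropy horseshoe measure, which costs only small weak$^*$ distance.

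Then comes the concatenation construction. I pick $(n,\delta)$-separated sets of orbit segments on $\Lambda_k$ whose empirical measures are near $\mu_{k,j}$, of cardinality about $e^{n(h_{\mu_{k,j}}-\varepsilon)}$, using uniform separation. I glue them by specification inside a single horseshoe, cycling through the chain for increasingly long times. I interleave short segments that visit a $1/k$-dense set of periodic orbits to force $\omega(x)=\Lambda$, and I start the concatenation inside $U$. This last step is possible because horseshoe points are dense in $\Lambda$ by the density of periodic orbits. The resulting Cantor-like set $F\subseteq G_{\mathcal K}\cap\Trans\cap U$ carries, via the entropy distribution principle for flows, a measure giving $\htop(\Phi,F)\ge\inf_{\mathcal K}h-O(\varepsilon)$.

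The main obstacle is making the horseshoe approximation compatible with the singularity. Orbits in $G_{\mathcal K}$ must spend asymptotically the right fraction of time near $\sigma$ when $\mathcal K$ contains measures charging $\delta_\sigma$. The horseshoes avoid $\sigma$, so their approximating measures must approach $\delta_\sigma$ through periodic orbits spending long times near $\sigma$. Such orbits require ever larger horseshoes with worse specification constants. I would control this by letting the gap functions grow with $k$ while keeping them sublinear relative to the block lengths, which the Pfister–Sullivan $g$-almost-product framework tolerates. The $C^r$, $r\ge2$, hypothesis in Theorem~\ref{thm:A} should enter exactly through the existence of such entropy-approximating horseshoes for generic Lorenz attractors.
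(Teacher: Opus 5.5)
Your upper bound is the paper's argument: the averaging map $\tau$, the identity $\tau(V_{\phi_1}(x))=V_\Phi(x)$, entropy preservation under $\tau$, and Bowen's $QR$ estimate. Your lower-bound architecture is also the paper's (Theorem~\ref{thm:5.1}): horseshoe approximation, nested horseshoes via homoclinic relations, gluing by specification with interleaved density segments and an initial segment in $U$, and an entropy distribution argument.

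The first gap is that your ``third part'' is false as stated: a horseshoe of a flow need not have specification. Specification for $\Phi$ forces topological mixing. Take $t_0=0$, $t_1=s$, $t_2=s+T_\varepsilon$; this gives $\phi_s(A)\cap B\neq\emptyset$ for all $s\ge T_\varepsilon$. A basic set can fail to be mixing; a constant-roof suspension over a subshift is the model case. Passing to a reparametrized suspension does not help. A time change alters the empirical measures, the set $\M(\Phi,\Lambda)$ and $h_\mu(\phi_1)$, and the shadowing must hold for the original flow. What you need is that each enlarged horseshoe $\widetilde\Lambda_n$ be mixing. The paper gets this from a generic property missing from your list, Lemma~\ref{lem:sectional_residual}(4): $\Lambda$ contains periodic orbits with incommensurate periods. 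This is proved by a Baire argument over the open sets $\mathcal G_{i,j,p,q}$ together with the time-change perturbation $Y_\varepsilon=\rho_\varepsilon X$. Using Lemma~\ref{lem:larger_horseshoe}, each horseshoe is then enlarged to contain two such orbits. It is then mixing by Theorem~\ref{incommensurates_implies_mixing} and has specification by Theorem~\ref{mixing_implies_specification}. This is one of the reasons $\mathcal R$ must be residual, beyond horseshoe approximation and homoclinic relatedness.

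The second gap is the separation scale. You derive uniform separation from expansiveness of each horseshoe separately, but then use a single scale $\delta$ at every stage. A scale coming from $\Lambda_k$ alone depends on $k$ and may degenerate as the horseshoes accumulate on $\sigma$. Since later stages dominate the orbit length, a fixed Bowen-ball radius $r$ would then see the branching of only finitely many stages. The estimate $\omega(B_s(z,r,\phi_1))\le e^{-s(H-2\eta)}$ would then collapse.

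The paper obtains a scale $\varepsilon^*$ depending only on $\eta$ from entropy-expansiveness of the whole attractor (Lemma~\ref{lem:sectional_residual}(2), via Pacifico--Yang--Yang and Liao--Viana--Yang). It combines this with Lemma~\ref{lem:AEE-to-USP} applied to $(\Lambda,\phi_1)$, and takes the separated sets inside $\supp(\nu_i)\subseteq\Lambda_i$.

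Finally, your step of mixing members of $\mathcal K$ with a high-entropy measure is unnecessary. The horseshoe approximation property already gives an ergodic $\nu$ on a horseshoe with $h_\nu(\phi_1)>h_\mu(\phi_1)-\varepsilon\ge\inf_{\mathcal K}h-\varepsilon$, including for $\mu$ charging $\sigma$.
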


\begin{remark}
    In the systems of the above two theorems, the set $G_{\mathcal{K}}\left(\Phi\right)  \cap \Trans(\Phi,\Lambda) \cap U$ remains non-empty even if the right-hand side of the equality vanishes.
\end{remark}

In \cite{shi2025connecting}*{Theorem B}, the authors give characterizations of the space of invariant measures for $C^r (r\geq2)$-generic and $C^r$-dense Lorenz attractors. They proved that for $C^r (r\geq2)$-generic  Lorenz attractors, periodic measures are dense in the space of invariant measures and the set of ergodic measures is path-connected while for $C^r$-dense Lorenz attractors, the singular measure is an isolated ergodic measure and thus the set of ergodic measures is not connected, which gives a sharp contrast (a similar property holds for $C^1$ singular hyperbolic attractors)\cite{shi2025connecting}*{Theorem B.1}. Based on the generic part, the authors of \cite{shi2023multifractal} prove the horseshoe approximation property for  $C^r (r\geq2)$-generic  Lorenz attractors and $C^1$-generic singular hyperbolic attractors and then study the multifractal analysis of such systems. The horseshoe approximation property plays a key role in the proofs of Theorems \ref{thm:A} and \ref{thm:B}.

In sharp contrast with the uniformly hyperbolic setting, our final two results show that generic points may fail to exist even for measures of positive entropy. This is precisely the obstruction announced in the introduction, and it explains why the discrete-time saturated theory cannot be applied directly.

The main idea originates in \cite{shi2025connecting}, where the singular measure is isolated in the set of ergodic measures for $C^r (r\geq 2)$ dense Lorenz attractors and $C^1$ dense singular hyperbolic attractors with a codimension two splitting.
\begin{theoremx}\label{thm:C}
    For every $r \geq 2$, there exists a dense subset $\mathcal{D}^r \subset \cX^r\left(M^3 \right)$ such that, if $\Lambda$ is a geometric Lorenz attractor of $X \in \mathcal{D}^r$, then there exists $\mu \in \M(\Phi, \Lambda)$ such that $h_\mu\left(\phi_1\right) > 0$ and $G_\mu(\Phi) = \emptyset$.
\end{theoremx}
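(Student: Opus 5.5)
We take $\mathcal D^r$ to be the dense set from \cite{shi2025connecting}*{Theorem B}: for $X\in\mathcal D^r$ and a geometric Lorenz attractor $\Lambda$ of $X$, the singular measure $\delta_\sigma$ (with $\sigma$ the Lorenz singularity) is isolated in $\Merg(\Phi,\Lambda)$. Thus there is a weak$^*$ neighbourhood $\mathcal V$ of $\delta_\sigma$ with $\mathcal V\cap\Merg(\Phi,\Lambda)=\{\delta_\sigma\}$. We also use that $\Lambda$ carries an ergodic measure $\nu$ with $h_\nu(\phi_1)>0$, for example the SRB measure or a measure of maximal entropy, since $\htop(\phi_1|_\Lambda)>0$. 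The candidate is
\[
\mu=(1-\lambda)\delta_\sigma+\lambda\nu
\]
for some $\lambda\in(0,1)$. Entropy is affine, so $h_\mu(\phi_1)=\lambda h_\nu(\phi_1)>0$.

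To prove $G_\mu(\Phi)=\emptyset$, we argue by contradiction and take $x$ with $\mathcal E_t(x,\Phi)\to\mu$. The orbit must spend asymptotic fraction $1-\lambda$ of its time near $\sigma$ and fraction $\lambda$ distributing like $\nu$. The key structural input from the isolation argument in \cite{shi2025connecting} is a dichotomy. Isolation of $\delta_\sigma$ is produced by a perturbation that breaks the connection between the unstable manifold of $\sigma$ and the rest of $\Lambda$. As a result, orbit segments leaving a small neighbourhood of $\sigma$ either
\begin{enumerate}[label=(\roman*)]
\item follow $W^u(\sigma)$, which no longer accumulates on the bulk of the attractor, for a time comparable to the time they spent near $\sigma$; or
\item never return close to $\sigma$.
\end{enumerate}
We would turn this into the following quantitative statement. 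There are a neighbourhood $B$ of $\sigma$ and a compact set $F$, disjoint from $\supp\nu$ but containing the closure of the branches of $W^u(\sigma)$ outside $B$, such that the time spent in a neighbourhood of $F$ after an excursion into $B$ is at least $c$ times the excursion time, up to a bounded error. Granting this, any limit measure of $\mathcal E_t(x,\Phi)$ that gives weight $1-\lambda>0$ to $\sigma$ must give weight at least $c(1-\lambda)$ to a neighbourhood of $F$. Since $\mu(F)=0$, this contradicts $\mathcal E_t(x,\Phi)\to\mu$.

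A softer alternative avoids the quantitative estimate. The unstable branches of $\sigma$ cross a cross-section transversally, so each close passage by $\sigma$ forces the orbit to follow $W^u(\sigma)$ for a definite time after leaving. Hence every limit of empirical measures satisfies $\mu'\ge$ const$\cdot$(mass near $\sigma$)$\cdot$(an invariant measure supported on $\overline{W^u(\sigma)}$). Isolation of $\delta_\sigma$ should imply that $\overline{W^u(\sigma)}$ supports no ergodic measure other than $\delta_\sigma$ that charges the region where the orbit is forced to go. One then chooses $\nu$, for instance a periodic or horseshoe measure, whose support avoids that region.

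The main obstacle is exactly this quantitative coupling between time spent near $\sigma$ and time spent following $W^u(\sigma)$. Near a singularity, time spent close to $\sigma$ is unbounded while the exit direction is only approximately $W^u$. We would handle this with the linearization at $\sigma$, which is $C^1$ by the $r\ge2$ hypothesis and Hartman–Grobman/Sternberg in dimension 3. In linear coordinates an orbit entering at distance $\delta$ from $W^s_{\mathrm{loc}}(\sigma)$ stays for time $\sim\log(1/\delta)/\lambda_u$. It exits within $\delta^{\alpha}$ of $W^u_{\mathrm{loc}}(\sigma)$ and therefore shadows $W^u(\sigma)$ for time $\sim\alpha\log(1/\delta)/\lambda_u$, which gives the linear ratio $c=\alpha$. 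The remaining ingredient is that, for $X\in\mathcal D^r$, the forward orbit of $W^u(\sigma)$ stays away from $\supp\nu$ for long times. This is where the construction of $\mathcal D^r$ in \cite{shi2025connecting} must be reopened: one arranges, for example, that $W^u(\sigma)$ falls into a hyperbolic periodic orbit's stable manifold, and takes $\nu$ supported away from that orbit.
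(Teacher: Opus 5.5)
Your mechanism is the one the paper uses. The input needed from \cite{shi2025connecting} is not the isolation of $\delta_\sigma$ as such. It is the configuration $W^u(\sigma)\setminus\{\sigma\}\subset W^s(\Orb(p))$ for a hyperbolic periodic point $p\in\Lambda$, which you correctly single out at the end. Your coupling estimate is Lemma~\ref{lem:star}: there are an open $U^*\ni\sigma$, a closed tube $V^*\supset\Orb(p)$ with $\overline{U^*}\cap V^*=\emptyset$, and $L>0$, such that every $x$ with $V_\Phi(x)\neq\{\delta_\sigma\}$ has some $\mu'\in V_\Phi(x)$ with $\mu'(V^*)\ge L\,\mu'(U^*)$.

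\textbf{The gap is in how you close the argument.} You fix $\lambda\in(0,1)$ in advance. The coupling then gives $\mu(N)\ge c(1-\lambda)$ only for the particular closed neighbourhood $N$ of $F$ on which it was proved, so you need $\lambda\,\nu(N)<c(1-\lambda)$. Your proposal secures this only by demanding $\supp\nu\cap F=\emptyset$, where $F\supseteq\overline{W^u(\sigma)\setminus B}\supseteq\Orb(p)$. Knowing $\mu(F)=0$ is not enough unless you also show that $c$ survives shrinking $N$ down to $F$.
\begin{itemize}
\item Your first candidates do not satisfy this: the SRB measure is fully supported on $\Lambda$, and nothing keeps a measure of maximal entropy away from $\Orb(p)$.
\item Your fallback is not justified either. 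Periodic measures have zero entropy. A positive-entropy horseshoe in $\Lambda$ disjoint from $\Orb(p)$ is plausible, but it needs an argument for $X\in\mathcal D^r$, where the residual statements of Lemma~\ref{lem:lorenz_residual} are not available.
\item The soft alternative fails outright. $\overline{W^u(\sigma)}=W^u(\sigma)\cup\Orb(p)$ does carry an ergodic measure other than $\delta_\sigma$, namely the periodic measure on $\Orb(p)$, and it lives exactly in the region the orbit is forced into.
\end{itemize}

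\textbf{The missing idea} is that the freedom in $\lambda$ makes any condition on $\nu$ unnecessary. Take any $\nu\in\M(\Phi,\Lambda)$ with $h_\nu(\phi_1)>0$; ergodicity is not needed. Put almost all the mass on the singularity, $1-\lambda>1/(1+L)$. If $x$ were generic for $\mu=(1-\lambda)\delta_\sigma+\lambda\nu\neq\delta_\sigma$, then $V_\Phi(x)=\{\mu\}$, so $\mu(V^*)\ge L\,\mu(U^*)$ and
\[
\mu(U^*)+\mu(V^*)\ \ge\ (1+L)\,\mu(U^*)\ \ge\ (1+L)(1-\lambda)\ >\ 1,
\]
contradicting $U^*\cap V^*=\emptyset$. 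This is exactly the paper's proof.

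Two smaller points in your sketch of the coupling:
\begin{itemize}
\item A $C^1$ linearization is not automatic for a $C^2$ saddle in dimension $3$, since the resonance $\lambda_2=\lambda_1+\lambda_3$ is compatible with the Lorenz conditions. This is why the paper works in adapted coordinates straightening $\Wsloc(\sigma)$ and $\Wuloc(\sigma)$.
\item The long stay after an excursion takes place near $\Orb(p)$, at a rate governed by its unstable exponent, so the constant is not $\alpha$. Any positive constant suffices, though.
\end{itemize}
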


\begin{theoremx}\label{thm:D}
    There exists a dense subset $\mathcal{D} \subset \cX^1\left(M\right)$ such that, if $\Lambda$ is a non-trivial singular hyperbolic attractor of $X \in \mathcal{D}$ satisfying $\Sing(\Lambda)\neq\emptyset$ and  whose singular hyperbolic splitting $T_{\Lambda}M=E^{ss} \oplus E^{cu}$ satisfies $\dim(E^{cu})=2$, then there exists $\mu \in \M(\Phi, \Lambda)$ such that $h_\mu\left(\phi_1\right) > 0$ and $G_\mu(\Phi) = \emptyset$.
\end{theoremx}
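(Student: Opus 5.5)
The plan is to mimic the proof of Theorem~\ref{thm:C}, replacing the $C^r$ Lorenz-specific input by its $C^1$ singular hyperbolic analogue from \cite{shi2025connecting}*{Theorem B.1}. That result provides a dense set $\mathcal{D}\subset\cX^1(M)$ with the following property. If $\Lambda$ is a non-trivial singular hyperbolic attractor of $X\in\mathcal{D}$ with $\Sing(\Lambda)\neq\emptyset$ and $\dim E^{cu}=2$, then for a singularity $\sigma\in\Lambda$ the Dirac measure $\delta_\sigma$ is isolated in $\Merg(\Phi,\Lambda)$. That is, there is a weak$^*$ neighbourhood $\mathcal{V}$ of $\delta_\sigma$ with $\mathcal{V}\cap\Merg(\Phi,\Lambda)=\{\delta_\sigma\}$. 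Since $\Lambda$ is non-trivial, it carries hyperbolic periodic orbits and horseshoes, hence an ergodic measure $\nu$ with $h_\nu(\phi_1)>0$. The candidate is
\[
\mu=\tfrac12\delta_\sigma+\tfrac12\nu ,
\]
or more generally $\mu=(1-\lambda)\delta_\sigma+\lambda\nu$ with small $\lambda\in(0,1)$. By affinity of entropy, $h_\mu(\phi_1)=\lambda h_\nu(\phi_1)>0$.

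It remains to show $G_\mu(\Phi)=\emptyset$. Suppose $x\in G_\mu(\Phi)$, so that $\mathcal{E}_t(x,\Phi)\to\mu$. Then $V_\Phi(x)=\{\mu\}$, so $\mu(\omega_\Phi(x))=1$, and in particular $\sigma\in\omega_\Phi(x)$. The orbit of $x$ therefore spends a positive proportion $(1-\lambda)$ of its time near $\sigma$ and a proportion $\lambda$ near $\supp\nu$, and it must alternate between these regions infinitely often. The key step is to show that such alternation forces accumulation on measures in the segment $[\delta_\sigma,\nu]$ different from $\mu$, or on measures outside it. I would use the mechanism behind the isolation of $\delta_\sigma$. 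Each excursion leaving a small neighbourhood of $\sigma$ must travel along $W^u(\sigma)\cap\Lambda$ for a definite time before returning. The isolation in \cite{shi2025connecting} comes from the fact that orbit segments starting and ending near $\sigma$ cannot shadow $\sigma$-homoclinic behaviour; in the dense setting the unstable branches of $\sigma$ lie in the basin of a proper piece, for instance converging to a periodic orbit or to some other invariant set.

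From this I would extract a compact invariant set $\Gamma\not\ni\sigma$ with the following property: every orbit segment that leaves $B_\delta(\sigma)$ spends a uniform fraction of the excursion time near $\Gamma$. By choosing $\nu$ with $\supp\nu$ disjoint from $\Gamma$ and $\lambda$ small, one obtains a contradiction. The limit of $\mathcal{E}_t(x,\Phi)$ would give $\Gamma$ weight comparable to the non-$\delta_\sigma$ part of $\mu$, yet $\mu(\Gamma)=0$. In dimension three, or more generally with $\dim E^{cu}=2$, this is exactly where the codimension-two splitting is used: the two separatrices of $\sigma$ govern all returns.

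If this structural statement is not available directly, the fallback is ergodic decomposition. The measure $\mu$ has the nontrivial decomposition $\mu=(1-\lambda)\delta_\sigma+\lambda\nu$, so one needs that generic points of a non-ergodic measure containing an isolated atom at the singularity cannot exist. I would attempt this via time-changes: reparametrize the flow near $\sigma$ to slow it down, which turns the atom into a statement about frequency of returns. I would then use the uniform expansion rate along $E^{cu}$ near $\sigma$ to bound the time spent near $\sigma$ per visit relative to the excursion time.

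The main obstacle is this uniform excursion estimate. It requires showing that whenever an orbit leaves and reenters a small neighbourhood of the singularity, the intermediate empirical measure stays bounded away from $\nu$ in a way compatible with the isolation of $\delta_\sigma$. Everything else (density of $\mathcal{D}$, positivity of entropy, the convex combination) is routine once \cite{shi2025connecting}*{Theorem B.1} is invoked.
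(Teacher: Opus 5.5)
Your outline (the dense set from \cite{shi2025connecting}, an ergodic $\nu$ of positive entropy, and $\mu=(1-\lambda)\delta_\sigma+\lambda\nu$ with $\lambda$ small) has the right shape. But the step you call the main obstacle is the whole proof, and the estimate you propose for it compares the wrong quantities.

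You want the time spent near $\Gamma$ to be a uniform fraction of the whole excursion away from $\sigma$, i.e.\ comparable to the $\nu$-part of $\mu$. This fails. Once the orbit has left the vicinity of $\Gamma$, nothing prevents it from shadowing other parts of $\Lambda$ (e.g.\ a horseshoe carrying $\nu$) for an arbitrarily long time before it next approaches $\sigma$. So that fraction has no positive lower bound. Isolation of $\delta_\sigma$ in $\Merg(\Phi,\Lambda)$ does not rescue this either: it constrains only ergodic measures and says nothing directly about generic points of the non-ergodic $\mu$. What you need from \cite{shi2025connecting} is not the conclusion of Theorem B.1 but the structure built in its dense-part proof: $\dim E^u(\sigma)=1$, and a hyperbolic periodic orbit $\Orb(p)\subset\Lambda$ with $W^u(\sigma)\setminus\{\sigma\}\subset W^s(\Orb(p))$.

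The right comparison is with the $\delta_\sigma$-part of $\mu$, via two estimates.
\begin{enumerate}
\item In adapted coordinates at $\sigma$, an orbit crossing a small neighbourhood $U^*$ of $\sigma$ and exiting at distance $r$ from $\Wuloc(\sigma)$ stays in $U^*$ for time at most $C|\log r|$.
\item Since $W^u(\sigma)\setminus\{\sigma\}\subset W^s(\Orb(p))$, the orbit then meets a cross-section at $p$ within $Kr$ of the local stable manifold. It therefore remains in a tubular neighbourhood $V^*$ of $\Orb(p)$, disjoint from $\overline{U^*}$, for time at least $c|\log(Kr)|$.
\end{enumerate}
Both bounds are logarithmic in the same $r$, so every sojourn in $U^*$ is followed by a sojourn in $V^*$ at least $L$ times as long. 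Taking a limit of $\mathcal{E}_T(x,\Phi)$ along the exit times from $V^*$ shows that every $x$ outside the stable set of $\sigma$ has some $\mu'\in V_\Phi(x)$ with $\mu'(V^*)\geq L\mu'(U^*)$; this is Lemma \ref{lem:star}.

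The contradiction is then a mass count, not $\mu(\Gamma)=0$. If $x$ were generic for $\mu$ and $1-\lambda>1/(1+L)$, then $1\geq\mu(U^*)+\mu(V^*)\geq(1+L)\mu(U^*)\geq(1+L)(1-\lambda)>1$. Thus the smallness of $\lambda$ is quantitatively essential; your first choice $\lambda=\tfrac12$ is not covered when $L\leq1$. Your requirement that $\supp\nu$ avoid $\Gamma$ plays no role.
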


We conclude this section with a brief comparison with the multifractal results of
\cite{shi2023multifractal}. For \(g\in C(\Lambda,\mathbb{R})\), define the
irregular set and the level sets by
\[
    I_g=
    \left\{
    x\in\Lambda:
    \lim_{T\to\infty}
    \frac1T\int_0^T g(\phi_t(x))\,dt
    \ \text{does not exist}
    \right\}
\]
and
\[
    R_g(a)=
    \left\{
    x\in\Lambda:
    \lim_{T\to\infty}
    \frac1T\int_0^T g(\phi_t(x))\,dt
    =
    a
    \right\},
    \qquad a\in\mathbb{R},
\]
respectively.

Under the generic hypotheses considered in \cite{shi2023multifractal},
the multifractal results there give a dichotomy for the irregular set:
either \(I_g=\varnothing\), or \(I_g\) is residual in \(\Lambda\) and carries full topological entropy. Moreover, in the latter case, the nontrivial level sets \(R_g(a)\) are dense and satisfy the corresponding conditional variational principle for topological entropy.

In the notation of the present paper, these conclusions follow from Theorems \ref{thm:A} and \ref{thm:B}. For completeness, we recall the precise statement of the result from \cite{shi2023multifractal}, translated into our notation, in Section \ref{subsec:5.2}, where we also give the formal deduction.

\subsection*{Novelty and main difficulties} Our results establish a flow analogue of the saturated entropy formula known for discrete-time systems with specification or almost product structure \cites{pfister2007topological}, in the setting of singular hyperbolic flows.  Two structural features distinguish the flow setting from the discrete-time one and account for the main difficulties, and a third phenomenon shows that the discrete-time theory cannot be transported directly.

\emph{(i) Passage between the flow and its time-one map.} The measure-theoretic entropy and the Bowen balls are most naturally formulated for the time-one map $\phi_1$, whereas the empirical measures $\mathcal{E}_t(x)$ are continuous-time averages. The bridge is the averaging map
\[
    \tau(\nu)=\int_0^1\nu\circ\phi_{-t}\,dt
\]
of Lemma \ref{lem:tau}, a continuous affine surjection from $\M(\phi_1,\Lambda)$ onto $\M(\Phi,\Lambda)$ that preserves entropy and intertwines the limit sets, $\tau\left(V_{\phi_1}(x)\right) = V_\Phi(x)$. The upper bound then follows from Bowen's estimate (Lemma \ref{lem:Bowen}). However, $\tau$ is neither injective nor a bijection between saturated sets: the example following Lemma \ref{lem:tau} exhibits a set $\mathcal{K}$ with $\phi_{[0,1]}G_{\mathcal{K}}(\phi_1) = \emptyset \subsetneq G_{\tau \mathcal{K}}(\Phi)$. Hence the discrete-time saturated theory cannot simply be transported, and the correspondence must be set up simultaneously at the level of measures, entropy, and supports.

\emph{(ii) Absence of global specification.} Unlike mixing hyperbolic basic sets, geometric Lorenz attractors and singular hyperbolic attractors containing a singularity satisfy neither the weak specification property \cite{sumi2015specification} nor the shadowing property \cite{WenWen2020}. We circumvent this through the horseshoe approximation property \cite{shi2023multifractal}: $\mathcal{K}$ is approximated by ergodic measures supported on a non-decreasing sequence of horseshoes, which are glued together via specification; by additionally interlacing segments of the orbit of a transitive point, the constructed points are made transitive in $\Lambda$ and confined to any prescribed open set, which yields the local form of the formula on $\Trans(\Phi,\Lambda) \cap U$. Nestedness is obtained from the larger-horseshoe construction (Lemma \ref{lem:larger_horseshoe}), and each horseshoe is made topologically mixing---hence enjoys the periodic specification property (Theorem \ref{mixing_implies_specification})---by ensuring that it contains periodic orbits with incommensurate periods, a $C^1$-generic property established in Lemmas \ref{lem:lorenz_residual} and \ref{lem:sectional_residual}. The resulting lower bound is Theorem \ref{thm:5.1}, in which the uniform separation property (Lemma \ref{lem:AEE-to-USP}) and the variational principle for the measure-theoretical lower entropy \cite{feng2012variational} yield the estimate.

\emph{(iii) Failure of generic points for singular flows.} In the proof of Theorems \ref{thm:C} and \ref{thm:D}, near a Lorenz-like singularity or a singular-hyperbolic type singularity $\sigma$, the local geometry implies that, for every $x$, 
either $V_\Phi(x)=\{\delta_\sigma\}$, 
or there exists $\mu\in V_\Phi(x)$ satisfying the fixed mass inequality $\mu(V^*) \geq L\,\mu(U^*)$ between a neighborhood $U^*$ of $\sigma$ and a tubular neighborhood $V^*$ of an associated periodic orbit (Lemma \ref{lem:star}, inequality \eqref{eq:star}). Choosing $\theta$ sufficiently close to $1$, the measure $\mu_\theta = \theta\delta_\sigma + (1-\theta)\mu$, with $h_{\mu}(\phi_{1})>0$, violates this inequality. Thus $G_{\mu_\theta} = \emptyset$ although $h_{\mu_\theta}(\phi_1) > 0$. This obstruction has no analogue in the uniformly hyperbolic case, and it explains why the measure-level approach above is necessary.

\subsection*{Organization} The paper is organized as follows. Section 2 recalls background on hyperbolic sets, horseshoes, geometric Lorenz attractors, singular hyperbolicity, specification and entropy for noncompact sets.
Section 3 proves the upper bound for saturated sets of flows by relating continuous-time empirical measures to the empirical measures of the time-one map.
Section 4 establishes a general lower bound in Theorem \ref{thm:5.1} under asymptotic entropy-expansiveness and specification on approximating subsystems.
Section 5 applies the abstract result to geometric Lorenz attractors and singular hyperbolic attractors, proves Theorems \ref{thm:A} and \ref{thm:B}, and derives the multifractal consequences. 
Finally, Section 6 proves Theorems \ref{thm:C} and \ref{thm:D} by constructing positive-entropy invariant measures without generic points.

\section{Preliminaries}
Throughout this paper, we frequently deal with a flow $\Phi=(\phi_t)_{t\in\mathbb R}$ and its time-one map $\phi_1$ simultaneously. We distinguish the continuous-time and discrete-time settings by including the corresponding flow or map in the notation. For example, we use $\mathcal{E}_{t}(x,\Phi), V_\Phi(x), G_{\mathcal{K}}(\Phi)$ for a flow and $\mathcal{E}_n(x,\phi_1), V_{\phi_{1}}(x), G_{\mathcal{K}}(\phi_{1})$ for its time-one map.

\subsection{Hyperbolic sets, basic sets and horseshoes}
We first describe the uniformly hyperbolic subsystems, namely basic sets and horseshoes.
\begin{definition}[Hyperbolic set for a flow]
Let $M$ be a compact Riemannian manifold, and let $\Phi=(\phi_t)$ be the flow generated by the $C^1$ vector field $X$. A compact $\Phi$-invariant set $\Lambda$ is hyperbolic if there exists a continuous $D\phi_t$-invariant tangent bundle splitting
$$
T_\Lambda M = E^s \oplus \langle X\rangle \oplus E^u
$$
and constants $C\ge 1$ and $\lambda\in(0,1)$ such that for every $x\in \Lambda$ and every $t>0$,
$$
\|D\phi_t|_{E^s_x}\|\le C\lambda^t,\qquad
\|D\phi_{-t}|_{E^u_x}\|\le C\lambda^t.
$$
Here, $\langle X(x)\rangle=\mathbb RX(x)$ is the flow direction if $x$ is a regular point,
and $\langle X(x)\rangle=0$ if $x$ is a singularity.
\end{definition}

\begin{definition}
    Let $X \in \cX^1(M)$ and let $\Phi=(\phi_t)$ be the flow generated by $X$. An invariant compact set $\Lambda$ is called a \emph{basic set} if it is transitive, hyperbolic, and locally maximal, i.e., there exists an open neighborhood U of $\Lambda$, such that $\Lambda = \bigcap_{t \in \Rb}\phi_t(U)$.
\end{definition}

We say the periods of a set of periodic points are \emph{commensurate} if they are all in $p\Zb$ for some $p > 0$. Otherwise, we say they are \emph{incommensurate}, which implies that they generate a dense subgroup of $\Rb$.

\begin{theorem}[\cite{fisher2019hyperbolic}*{Theorem 6.2.11}]\label{incommensurates_implies_mixing}
    Let $\Lambda$ be a locally maximal hyperbolic set containing no singularities of $X$. Then the following statements are equivalent:
    \begin{enumerate}
        \item $\Phi|_{\Lambda}$ is mixing.
        \item The periodic points of $\Lambda$ are dense in $\Lambda$, and for each periodic point $p \in \Lambda$ we have $\overline{W^s(p) \cap \Lambda}=\Lambda$ and $\overline{W^u(p) \cap \Lambda}=\Lambda$.
        \item $\Phi|_{\Lambda}$ is transitive and has periodic points with incommensurate periods.
    \end{enumerate}
\end{theorem}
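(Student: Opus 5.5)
We prove the cycle of implications $(1)\Rightarrow(2)\Rightarrow(3)\Rightarrow(1)$, using the standard tools for locally maximal hyperbolic sets without singularities: the local product structure, the Anosov closing and shadowing lemmas, and the spectral decomposition. Since $\Lambda$ contains no singularities, the flow direction $\langle X\rangle$ is a genuine one-dimensional bundle, and every point has strong stable and strong unstable manifolds $W^{ss}(x)$, $W^{uu}(x)$. The weak manifolds are then $W^{s}(x)=\bigcup_t\phi_t W^{ss}(x)$ and $W^{u}(x)=\bigcup_t\phi_t W^{uu}(x)$.

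\emph{$(1)\Rightarrow(2)$.} Mixing implies transitivity. Shadowing together with the closing lemma, which applies because $\Lambda$ is locally maximal, then gives density of periodic orbits in $\Lambda$. Now fix a periodic point $p$ of period $T$, and let $V\subseteq\Lambda$ be open. By mixing, $\phi_t(B)\cap V\neq\emptyset$ for all large $t$, where $B$ is a small neighbourhood of $p$ in $\Lambda$.
\begin{itemize}
\item By local product structure, points of $B$ are connected to $W^{uu}_{\mathrm{loc}}(\phi_s p)$ for some small $s$ via short stable plaques.
\item Applying $\phi_t$ with $t$ a large multiple of $T$ shrinks those stable plaques.
\item Hence $W^{u}(p)\cap\Lambda$ comes within any $\varepsilon$ of $V$.
\end{itemize}
Because $V$ was arbitrary, $\overline{W^u(p)\cap\Lambda}=\Lambda$. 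The stable case follows by reversing time.

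\emph{$(2)\Rightarrow(3)$.} Transitivity follows from density of $W^u(p)\cap\Lambda$ for a single periodic $p$. Given open sets $A,B$, pick $y\in W^u(p)\cap A$; then its backward orbit accumulates on $\mathcal{O}(p)$. Using density of $W^s(p)\cap\Lambda$ and the local product structure near $p$, one can connect $\mathcal{O}(p)$ to $B$, which gives $\phi_t(A)\cap B\neq\emptyset$.

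For incommensurate periods, argue by contradiction: suppose all periods lie in $\tau\Zb$. Take a periodic $p$ of period $T$ and a transverse homoclinic point $q\in W^{ss}(p)\pitchfork W^{u}(p)$ inside $\Lambda$; such a point exists by density of the weak manifolds and local product structure. Shadowing the pseudo-orbits that go around $\mathcal{O}(p)$ $n$ times and then along the homoclinic excursion yields periodic orbits with periods $nT+c+\epsilon_n$, where $\epsilon_n\to0$. These periods cannot all lie in $\tau\Zb$ unless $\epsilon_n$ is eventually constant.

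This step is the main obstacle. Condition (2) states density of the \emph{weak} manifolds, so the shift along the flow direction is not automatically nonzero. I would try to use density of $W^{uu}$ versus $W^{u}$ to produce a point $q$ lying on $W^{ss}(p)$ and on $\phi_s W^{uu}(p)$ with a controllable $s$. Varying $q$ among the dense set of homoclinic points gives a continuum of possible time shifts $s$, and shadowing then produces periods whose differences are not all in $\tau\Zb$. Alternatively, as in \cite{fisher2019hyperbolic}, one shows that commensurate periods force $\Lambda$ to be a suspension with constant roof over a basic set. In that situation $W^{ss}(p)$ cannot be dense in $W^{s}(p)\cap\Lambda$ in the required sense, which contradicts (2).

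\emph{$(3)\Rightarrow(1)$.} By spectral decomposition, a transitive locally maximal hyperbolic set of a flow is either mixing or a constant-time suspension over a mixing basic set; the latter is the Bowen--Anosov alternative. In the suspension case, every periodic period is an integer multiple of the constant roof, so periods are commensurate. Incommensurate periods exclude this case, and so $\Phi|_\Lambda$ is mixing. Concretely, the alternative is proved by studying the closed subgroup of $\Rb$ generated by return-time discrepancies along strong unstable leaves. If this subgroup is all of $\Rb$, one obtains that $W^{uu}(p)$ is dense, and density of $W^{uu}(p)$ implies mixing by the same plaque-shrinking argument used in $(1)\Rightarrow(2)$, run in reverse. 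If it is a proper closed subgroup $c\Zb$, all periods lie in $c\Zb$, which contradicts the incommensurability assumed in (3).
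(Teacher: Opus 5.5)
\paragraph{The gap.} Your reading of condition (2) is the problem, and under that reading your step $(2)\Rightarrow(3)$ cannot be proved at all, because it is false. You take $W^s(p)$ and $W^u(p)$ to be the weak manifolds $\bigcup_t\phi_tW^{ss}(p)$ and $\bigcup_t\phi_tW^{uu}(p)$ of the orbit, like the paper's $W^s(\Orb(p))$. With that reading the equivalence fails.

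Let $\Phi$ be the suspension of a hyperbolic automorphism $A$ of $\mathbb{T}^2$ with constant roof $1$.
\begin{itemize}
\item It is an Anosov flow, so $\Lambda=M$ is locally maximal, hyperbolic and has no singularities, and its periodic orbits are dense.
\item For a periodic point $p=(x_0,0)$, the weak stable manifold of $\Orb(p)$ contains $W^s_A(x_0)\times[0,1)$. This set is dense because $W^s_A(x_0)$ is dense in $\mathbb{T}^2$, and the unstable case is the same.
\item Yet every period is an integer, and $e^{2\pi i s}$ ($s$ the fibre coordinate) is a continuous eigenfunction, so the flow is not mixing. This is the smooth analogue of the constant-roof suspension in the appendix.
\end{itemize}
Your homoclinic computation detects exactly this. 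The condition $c+\epsilon_n\in\tau\Zb$ with $\epsilon_n\to0$ only forces $\epsilon_n=0$ eventually, which is what happens in the suspension, so there is no contradiction to extract. Your fallback argument is also inconsistent with your reading. It says that in the suspension case $W^{ss}(p)$ is not dense, contradicting (2), but that only contradicts (2) if (2) concerns \emph{strong} manifolds. The intended meaning, as in Fisher--Hasselblatt, is indeed that $W^s(p)$ and $W^u(p)$ are the strong stable and unstable manifolds of the point $p$. In the example, $W^{ss}(p)$ stays inside the fibre $\mathbb{T}^2\times\{0\}$, so (2) fails there, as it must.

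\paragraph{What survives under the strong reading.} Most of your plan goes through and becomes shorter.
\begin{itemize}
\item Your $(1)\Rightarrow(2)$ argument already proves the strong statement. Because you only flow for times $t=nT$, a bracket point $\phi_s u$, with $u\in W^{uu}_{\mathrm{loc}}(p)\cap\Lambda$ and $|s|$ small, is carried to $\phi_s(\phi_{nT}u)$. The point $\phi_{nT}u\in W^{uu}(p)\cap\Lambda$ lies within $|s|\sup\|X\|$ of it. This is exactly where mixing, rather than transitivity, is used.
\item For $(2)\Rightarrow(3)$ you do not need homoclinic points. Density of $W^{uu}(p)\cap\Lambda$, the uniform expansion $\phi_{nT}W^{uu}_{\ve}(p)\supseteq W^{uu}_R(p)$, and a compactness argument over $t$ modulo $T$ give $\phi_t(A)\cap B\neq\emptyset$ for all large $t$, that is, mixing.
\item Mixing plus the closing lemma then gives, for every large $t$, a periodic orbit in $\Lambda$ with period within any prescribed $\ve$ of $t$. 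That is impossible if all periods lie in some $c\Zb$, so the periods are incommensurate.
\end{itemize}
The only substantial implication left is $(3)\Rightarrow(1)$, the Anosov--Bowen alternative. Quoting it is reasonable, since the paper itself only cites Fisher--Hasselblatt. Be aware, though, that your sketch of it asserts both halves without proof: that the closed group of time discrepancies along strong unstable leaves is either $\Rb$, giving dense $W^{uu}(p)$, or $c\Zb$, forcing all periods into $c\Zb$. That dichotomy is the actual content of the theorem.
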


\begin{definition}
    A basic set $\Lambda$ is called a \emph{horseshoe} if $\Lambda$ is a proper subset of $M$, is not reduced to a single orbit of a hyperbolic critical element, and its intersection with any local cross-section to the flow is totally disconnected.
\end{definition}
\begin{remark*}
    Definitions of horseshoes vary across different literatures. In this paper, we follow the definition given in \cite{shi2023multifractal}*{Definition 4.2}.
\end{remark*}

We give the definition of the homoclinic class of a hyperbolic periodic orbit and the homoclinically related property between hyperbolic periodic orbits. First, recall that for a hyperbolic periodic point $p$ of $X \in \cX^1(M)$, the \emph{stable manifolds} or \emph{unstable manifolds} associated to its orbit $\mathrm{Orb}(p)$ are defined as follows:
\[
    W^s(\mathrm{Orb}(p)) = \left\{ x : \lim_{t \to +\infty} d\left(\phi_t(x), \mathrm{Orb}(p)\right)  = 0 \right\},
\]
\[
    W^u(\mathrm{Orb}(p)) = \left\{ x : \lim_{t \to +\infty} d\left(\phi_{-t}(x), \mathrm{Orb}(p)\right)  = 0 \right\}
\]
where $\Orb(p):=\left\{\phi_t(p):t \in \Rb\right\}$. The classical theory on invariant manifolds by \cite{hirsch1977invariant} ensures that $W^s(\mathrm{Orb}(p))$ and $W^u(\mathrm{Orb}(p))$
are submanifolds of $M$.

\begin{definition}
    Given two distinct hyperbolic periodic points $p, q$ of $X \in \cX^1(M)$, the \emph{homoclinic class} of $p$ is defined as
    \[
        H(p) = \overline{W^s(\mathrm{Orb}(p)) \pitchfork W^u(\mathrm{Orb}(p))},
    \]
    that is, the closure of the set of transversal intersections between the stable and unstable manifolds of the periodic orbit $\mathrm{Orb}(p)$ of $p$. Two hyperbolic periodic orbits $\mathrm{Orb}(p)$ and $\mathrm{Orb}(q)$
    are \emph{homoclinically related} if the stable manifold of $\mathrm{Orb}(p)$ has non-empty transverse intersections with the unstable manifold of $\mathrm{Orb}(q)$ and vice versa. A homoclinic class is called
    \emph{non-trivial} if it is not reduced to a single hyperbolic periodic orbit.
\end{definition}

\begin{lemma}[\cite{palis1982geometric}*{Lemma 7.1}]\label{lem:larger_horseshoe}
    Let $\Lambda_1$ and $\Lambda_2$ be two horseshoes of $X \in \cX^1(M)$. Assume there exist hyperbolic periodic points $p_1 \in \Lambda_1$ and $p_2 \in \Lambda_2$ such that $\Orb\left(p_1\right) $ and $\Orb\left(p_2\right) $ are homoclinically related. Then there exists a larger horseshoe $\Lambda$ that contains both $\Lambda_1$ and $\Lambda_2$.
\end{lemma}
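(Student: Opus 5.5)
The proof builds a single locally maximal hyperbolic set containing $\Lambda_1 \cup \Lambda_2$ together with a transverse heteroclinic cycle between them, and then shows it is a horseshoe. The model is the classical construction for diffeomorphisms; here we work on local cross-sections so that the flow case reduces to it.

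\emph{Step 1: a heteroclinic chain.} Since $\Orb(p_1)$ and $\Orb(p_2)$ are homoclinically related, we can choose points
\[
q \in W^u(\Orb(p_1)) \pitchfork W^s(\Orb(p_2)), \qquad q' \in W^u(\Orb(p_2)) \pitchfork W^s(\Orb(p_1)).
\]
Consider
\[
\Gamma = \Lambda_1 \cup \Lambda_2 \cup \Orb(q) \cup \Orb(q').
\]
The orbits of $q$ and $q'$ accumulate only on $\Orb(p_1)\cup\Orb(p_2)$ in forward and backward time, so $\Gamma$ is compact and invariant.

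\emph{Step 2: $\Gamma$ is hyperbolic.} The splittings on $\Lambda_1$ and $\Lambda_2$ have equal stable dimension, because $p_1$ and $p_2$ have transverse intersections in both directions and hence equal index. Along the two connecting orbits we define $E^s$ and $E^u$ as the tangent spaces of the stable and unstable manifolds through those points. Transversality makes these subspaces complementary to each other and to the flow direction. The uniform contraction and expansion estimates then follow by the standard cone-field argument: a connecting orbit spends all but a bounded amount of time near $\Orb(p_i)$, and the transition times near $q$ and $q'$ are uniformly bounded.

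\emph{Step 3: thicken to a basic set.} Take a small neighborhood $V$ of $\Gamma$ and set $\tilde\Lambda = \bigcap_{t \in \Rb}\phi_t(V)$. By the shadowing lemma and the openness of hyperbolicity, $\tilde\Lambda$ is hyperbolic and locally maximal for $V$ small, and it contains $\Gamma$. Transitivity comes from the cycle through $p_1$ and $p_2$. By the $\lambda$-lemma, $W^u(\Orb(p_1))$ accumulates on $W^u(\Orb(p_2))$ and conversely, and both horseshoes are themselves transitive; combining these facts we shadow pseudo-orbits that visit any prescribed open sets of $\Lambda_1$, $\Lambda_2$, and the connecting orbits. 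If the resulting maximal set is not transitive, we instead take $\tilde\Lambda$ to be the homoclinic class of $p_1$ inside $V$, which is transitive and still contains $\Gamma$.

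\emph{Step 4: the horseshoe conditions.} Three properties remain. First, $\tilde\Lambda \neq M$, because $V$ is a small neighborhood of a proper set. Second, $\tilde\Lambda$ is not a single orbit, since it contains the horseshoe $\Lambda_1$. Third, we need total disconnectedness on cross-sections. Working with a Markov family of local sections covering $\Lambda_1$, $\Lambda_2$ and small disks through $q$ and $q'$, $\tilde\Lambda$ corresponds to a subshift of finite type coded over these rectangles, so its trace on each section is a Cantor set. Singularities do not occur: $\Gamma$ contains none, and $V$ can be chosen to avoid them.

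\emph{Main obstacle.} I expect the hardest step to be choosing $V$ so that local maximality and total disconnectedness hold at once while $\tilde\Lambda$ still contains the entire original $\Lambda_1$ and $\Lambda_2$, not merely continuations of them. I would handle this with isolating blocks: take isolating neighborhoods $U_1$ and $U_2$ of $\Lambda_1$ and $\Lambda_2$, join them by thin tubes around the compact connecting orbit arcs, and note that the maximal invariant set of this union automatically contains $\Lambda_1 \cup \Lambda_2$. Its hyperbolicity then follows from Step 2.
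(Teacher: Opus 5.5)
The paper quotes this lemma from Palis--de Melo without proof, so I compare your proposal with the standard argument. Your outline is the right one: a transverse heteroclinic cycle $\Gamma$, hyperbolicity of $\Gamma$, then thickening. Step 2 is fine once $E^s,E^u$ along $\Orb(q),\Orb(q')$ are taken tangent to the \emph{strong} stable and unstable leaves; you need the strong leaves because $W^s(\Orb(p_2))$ and $W^u(\Orb(p_1))$ both contain $X$.

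\textbf{Local maximality.} The genuine gap is isolation. In Step 3 you assert that $\bigcap_t\phi_t(V)$ is locally maximal for small $V$. Smallness alone cannot give this, since there are hyperbolic sets contained in no locally maximal hyperbolic set (examples of Crovisier and of Fisher, which suspend to flows). The maximal invariant set of a compact $V$ is isolated when it lies in $\Int V$, but nothing in your argument forces that.

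Your remedy (isolating neighborhoods $U_1,U_2$ joined by thin tubes) only gives $\Lambda_1\cup\Lambda_2\subset\tilde\Lambda$ and hyperbolicity. For the open set $W=U_1\cup U_2\cup(\text{tubes})$, the set $\bigcap_t\phi_t(W)$ need not be closed. Orbits of $\bigcap_t\phi_t(\overline W)$ may also touch $\partial W$: the lateral boundary of a flow-box tube is tangent to $X$, so no entrance/exit property rules this out. The missing step is to show, using transversality at $q,q'$ and the local product structure of $\Lambda_1,\Lambda_2$, that every orbit staying in $\overline W$ crosses each tube inside a much thinner core around the connecting arc. Then such orbits never meet $\partial W$.

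\textbf{Transitivity.} Your transitivity argument depends on isolation too, because shadowing orbits lie in $\tilde\Lambda$ only when $\tilde\Lambda$ is isolated. Your fallback, ``the homoclinic class of $p_1$ inside $V$'', gives up exactly local maximality. The right fallback, once $\tilde\Lambda$ is isolated, is the piece of its spectral decomposition containing $p_1$. It is locally maximal and transitive. It contains $\Lambda_1$ and $\Lambda_2$: each $\Lambda_i$ is a transitive subset of the nonwandering set containing $p_i$, and $\Orb(p_1),\Orb(p_2)$ lie in the same chain class of $\tilde\Lambda$ via $q,q'\in\tilde\Lambda$.

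\textbf{Total disconnectedness.} Step 4 has a second gap: coding by a Markov family does not give Cantor sections. A Markov family only provides a finite-to-one semiconjugacy from a suspended subshift of finite type. Anosov flows admit Markov families, yet $\Lambda=M$ meets every cross-section in a full disk. The coding is injective only if the rectangles can be chosen pairwise disjoint, which is essentially the total disconnectedness you want to prove.

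You need an independent argument, for instance the following.
\begin{itemize}
\item Suppose $\tilde\Lambda$ meets some local section in a nondegenerate continuum. The local product structure of the (isolated) hyperbolic set $\tilde\Lambda$ then gives a nondegenerate continuum in the stable or unstable slice of $\tilde\Lambda$ through some point.
\item Backward, respectively forward, iteration stretches it to a continuum in $\tilde\Lambda$, transverse to $X$, of diameter at least some $c>0$. Here $c$ depends only on the hyperbolicity constants of a fixed neighborhood of $\Gamma$.
\item On the other hand, $\Gamma$ meets local sections in compact zero-dimensional sets: traces of $\Lambda_1,\Lambda_2$ plus countably many points of $\Orb(q)\cup\Orb(q')$ accumulating on them.
\item Hence, for $W$ thin enough, every connected subset of $\tilde\Lambda$ lying in a local section has diameter $<c$, a contradiction.
\end{itemize}
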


Lemma \ref{lem:larger_horseshoe} shows that, homoclinic relatedness allows different horseshoes to be combined into a larger one, while the following approximation property allows invariant measures to be approximated by ergodic measures supported on horseshoes without a significant loss of entropy. 

Here and below, any probability measure supported on a compact invariant subset of $\Lambda$ is naturally regarded as an element of $\M(\Lambda)$. We denote by $D$ the first Wasserstein distance on $\M(\Lambda)$ induced by the ambient metric on $M$; it is also called the Kantorovich--Rubinstein distance, see \cite{villani2009optimal}*{Definition 6.1}. All distances between such measures are computed in the ambient space $\M(\Lambda)$.

\begin{definition}
    Let $X \in \cX^1(M)$ and $\Lambda$ be a $\Phi$-invariant compact set. We say a convex subset $\cK \subseteq \M(\Phi, \Lambda)$ has the \emph{horseshoe approximation property} if for any $\ve > 0$ and any $\mu \in \cK$, there exists a horseshoe $\Lambda' \subset \Lambda$ and $\nu \in \Merg(\Phi, \Lambda')$ such that $D(\mu, \nu) < \ve$ and $h_\nu\left(\phi_1\right)  > h_\mu\left(\phi_1\right) - \ve$.
\end{definition}

In this article, we say $\Lambda$ has the horseshoe approximation property if $\M(\Phi, \Lambda)$ has the horseshoe approximation property.

The horseshoe approximation property will later allow us to replace the singular hyperbolic dynamics by a non-decreasing sequence of mixing horseshoes. We next recall the geometric classes of attractors in which this
approximation property is available generically.

\subsection{Geometric Lorenz attractor and singular hyperbolicity}
For a vector field $X \in \cX^1(M)$, denote its set of singularities by
\[
    \Sing(X):=\{x\in M:X(x)=0\}.
\]
For a compact $\Phi$-invariant set $\Lambda$, write
\[
    \Sing(\Lambda):=\Sing(X)\cap\Lambda.
\]
\subsubsection{Singular hyperbolicity}
The concept of singular hyperbolicity was introduced by Morales, Pacifico and Pujals \cite{morales2004robust} to describe the geometric structure of Lorenz attractors and these ideas were extended to higher dimensional cases in \cites{li2005robustly, metzger2008sectional}.

\begin{definition}
    Given a flow $\Phi=\{ \phi_{t} \}$ generated by a vector field $X \in \cX^1(M)$, a compact $\phi_{t}$-invariant set $\Lambda$ is \emph{singular hyperbolic} if it admits a continuous $D\phi_t$-invariant splitting $T_\Lambda M = E^{ss} \oplus E^{cu}$ and constants $C > 1$ and $\eta > 0$, such that for any $x \in \Lambda$ and any $t \geq 0$,
    \begin{enumerate}
        \item $E^{ss}$ is dominated by $E^{cu}$: $\left\lVert D\phi_t|_{E^{ss}_x}\right\rVert \cdot \left\lVert D\phi_{-t}|_{E^{cu}_{\phi_t(x)} }\right\rVert < Ce^{-\eta t}$
        \item $E^{ss}$ is uniformly contracted: $\left\lVert D\phi_{t}(v) \right\rVert < Ce^{-\eta t} \left\lVert v\right\rVert $ for any $v \in E^{ss}_x \backslash \left\{0\right\} $.
        \item $E^{cu}$ is sectionally expanded: $\left\lvert \det D\phi_t|_{V_x}\right\rvert > C^{-1}e^{\eta t}$ for any two-dimensional subspace $V_x \subseteq E^{cu}_x$.
    \end{enumerate}
\end{definition}

\begin{proposition}\label{prop:sec-hyp}
    Let $\Phi=\{ \phi_{t} \}$ be a flow generated by a vector field $X \in \cX^1(M)$, and let $\Lambda$ be a compact $\phi_t$-invariant transitive set. Then, the following properties hold:
    \begin{enumerate}
        \item All hyperbolic periodic orbits of a singular hyperbolic set $\Lambda$ have the same index.
        \item If $\Lambda$ is hyperbolic and contains regular points, then $\Sing(\Lambda) = \emptyset$.
        \item \cite{morales2004robust}*{Proposition 1.8} If $\Sing(\Lambda) = \emptyset$, then $\Lambda$ is hyperbolic if and only if $\Lambda$ is singular hyperbolic for $X$ or $-X$.
        \item \cite{morales1999singular}*{Proposition 1} singular hyperbolicity is $C^1$-robust.
    \end{enumerate}
\end{proposition}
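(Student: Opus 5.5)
Items (3) and (4) are quoted from \cite{morales2004robust} and \cite{morales1999singular}, so I will only prove (1) and (2) and cite the others. Both rest on one elementary fact. At a regular point $x$ of a hyperbolic set, the vector $X(x)$ is neither contracted nor expanded exponentially: $\|D\phi_t X(x)\| = \|X(\phi_t(x))\|$ is bounded above by $\max\|X\|$. On a singular hyperbolic set, the same bound forces $X(x)\in E^{cu}_x$ at regular $x$. Indeed, write $X(x)=v^s+v^{cu}$. If $v^{cu}\neq 0$, domination makes the $E^{ss}$-component negligible relative to the $E^{cu}$-component along the forward orbit, so the angle between $D\phi_tX(x)$ and $E^{cu}$ tends to $0$. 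If instead $v^{cu}=0$, then $X(x)\in E^{ss}_x$ and $\|X(\phi_{-t}(x))\|\ge C^{-1}e^{\eta t}\|X(x)\|\to\infty$, which is absurd. So in either case $X(x)$ lies in $E^{cu}$ in the limit along orbits. Since $E^{cu}$ is closed and invariant, and the projective argument is applied along $\omega$-limits (or, more simply, the argument is run backward in time), we get $X(x)\in E^{cu}_x$.

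For (1), let $p\in\Lambda$ be a hyperbolic periodic point with period $\pi$. By the fact above, $X(p)\in E^{cu}_p$. Take any unit vector $v\in E^{cu}_p$ transverse to $X(p)$, and put $V=\mathrm{span}\{X(p),v\}$. Sectional expansion gives $|\det D\phi_{n\pi}|_{V}|\to\infty$. Since $\|D\phi_{n\pi}X(p)\|=\|X(p)\|$, the vector $v$ must be expanded modulo $X(p)$. Hence the linear Poincaré map on $E^{cu}_p/\langle X(p)\rangle$ has no eigenvalue of modulus $\le 1$ on the relevant two-dimensional pieces. Running this over all pairs (using a Jordan decomposition of $D\phi_\pi|_{E^{cu}}$ and the fact that $E^{cu}$ splits into the flow direction plus a complement), every eigenvalue of the Poincaré map on $E^{cu}_p/\langle X\rangle$ has modulus $>1$. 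Meanwhile $E^{ss}_p$ is contracted. So the stable index of $p$ equals $\dim E^{ss}$, which is constant on $\Lambda$ by continuity of the splitting, and all periodic orbits have the same index. The step I expect to need the most care is this eigenvalue argument: sectional expansion of $2$-planes must imply expansion of every direction transverse to $X$ inside $E^{cu}$. This works because the determinant on the plane spanned by $X$ and an eigen-direction (or a generalized eigen-plane, for complex eigenvalues) factors as $\|X\|$-bounded times the transverse expansion.

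For (2), suppose $\Lambda$ is hyperbolic, contains regular points, and contains a singularity $\sigma$. Transitivity gives a point $x$ with a dense orbit, which is necessarily regular. At regular points the splitting is $E^s\oplus\langle X\rangle\oplus E^u$, of total dimension $\dim M$. At $\sigma$ the flow direction is $0$, so $\dim E^s_\sigma+\dim E^u_\sigma=\dim M$. Continuity of the hyperbolic splitting on $\Lambda$, namely of $E^s$ and $E^u$ as bundles of constant dimension (which follows from the uniform cone estimates), forces $\dim E^s+\dim E^u$ to be constant on $\Lambda$. This is impossible, since the value is $\dim M-1$ near regular points accumulating on $\sigma$ and $\dim M$ at $\sigma$. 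More concretely, along the dense orbit there are points $x_n\to\sigma$ with $X(x_n)/\|X(x_n)\|\to w$. The limit $w$ is a unit vector at $\sigma$ lying outside $E^s_\sigma\oplus E^u_\sigma$ by the limit of the continuous splitting, contradicting $T_\sigma M=E^s_\sigma\oplus E^u_\sigma$.

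Finally, items (3) and (4) are cited directly from \cite{morales2004robust}*{Proposition 1.8} and \cite{morales1999singular}*{Proposition 1}.
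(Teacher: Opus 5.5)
The paper gives no proof of items (1) and (2); it states them as standard facts and, as you do, only cites Morales--Pacifico--Pujals for (3) and (4). So there is no paper route to compare against. Your arguments for (1) and (2) are the standard ones and are correct in substance, but several sentences need repair.

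\textbf{The preliminary fact $X(x)\in E^{cu}_x$.} Drop the forward-time discussion. In the case $v^{cu}\neq 0$ it only shows that the direction of $X(\phi_t(x))$ approaches $E^{cu}$, which says nothing about $X(x)$ itself, and the appeal to $\omega$-limits is not an argument. The proof is the backward estimate you mention in passing, and it needs no domination. Suppose the $E^{ss}$-component $v^s$ of $X(x)$ is nonzero. Let $K$ bound the norm of the projection onto $E^{ss}$ along $E^{cu}$ over $\Lambda$; such a bound exists by continuity of the splitting and compactness. By invariance this projection sends $X(\phi_{-t}(x))$ to $D\phi_{-t}v^s$. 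Hence $\|X(\phi_{-t}(x))\|\geq K^{-1}C^{-1}e^{\eta t}\|v^s\|\to\infty$, which contradicts boundedness of $X$.

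\textbf{Item (1).} You can avoid Jordan forms. For any $w\in E^{cu}_p\setminus\langle X(p)\rangle$, we have $D\phi_{n\pi}X(p)=X(p)$. So the area ratio of $\mathrm{span}\{X(p),w\}$ under $D\phi_{n\pi}$ equals $d(D\phi_{n\pi}w,\langle X(p)\rangle)/d(w,\langle X(p)\rangle)$. Thus the induced map on $E^{cu}_p/\langle X(p)\rangle$ expands the quotient norm by at least $C^{-1}e^{\eta n\pi}$, and all of its eigenvalues, real or complex, have modulus at least $e^{\eta\pi}$. This even shows that every periodic orbit in $\Lambda$ is hyperbolic, with stable index $\dim E^{ss}$. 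The constancy of $\dim E^{ss}$ uses that $\Lambda$ is connected; this holds because $\Lambda$ is an orbit closure or an $\omega$-limit set.

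\textbf{Item (2).} The paper's definition of a hyperbolic set already requires the splitting $E^s\oplus\langle X\rangle\oplus E^u$ to be continuous, so invoke that directly. Continuity of $\langle X\rangle$ alone rules out a singularity accumulated by the regular dense orbit.

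Your parenthetical claim that continuity of $E^s,E^u$ ``follows from the uniform cone estimates'' is not justified at exactly the point that matters. The usual derivation takes limits of $E^s_{x_n}$, notes they are contracted and hence contained in $E^s_x$, and then counts dimensions. That count fails at a singularity, because $\dim E^s+\dim E^u$ is precisely what jumps there.

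The ``more concrete'' argument should be deleted. The vector $w$ lies in $T_\sigma M=E^s_\sigma\oplus E^u_\sigma$, so it cannot lie outside that sum. Nothing in the hypotheses gives a uniform lower bound on the angle between $X(x_n)$ and $E^s_{x_n}\oplus E^u_{x_n}$ that would survive the limit.
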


\subsubsection{Geometric Lorenz attractors}
Next, we give the definition of geometric Lorenz attractors following Guckenheimer and Williams \cites{guckenheimer1979structural,williams1979structure} for vector fields on a closed $3$-manifold $M^3$. For a hyperbolic singularity $\sigma$ of $X$, its local stable manifold of size $\delta$ is
\[W_\delta^s(\sigma) = \left\{x \in M: \lim_{t \to +\infty} \phi_t(x) = \sigma \mathrm{\ and\ } d\left(\phi_t(x), \sigma\right) < \delta \mathrm{\ for\ any\ } t \geq 0 \right\}.\]
An invariant compact set $\Lambda$ is called an \emph{attracting set} if there exists a neighborhood $U$ of $\Lambda$ such that $\phi_t(\overline{U}) \subset U $ for any $t > 0$ and $\bigcap_{t > 0}\phi_t(\overline{U}) = \Lambda$. Such a neighborhood $U$ will be called an \emph{attracting
neighborhood} of $\Lambda$. $\Lambda$ is called an \emph{attractor} if it is a transitive attracting set.

\begin{definition}
    We say a singular hyperbolic attractor $\Lambda$ of $X \in \cX^r\left(M^3\right)$, where $r \geq 1$, is a \emph{geometric Lorenz attractor} if there exist an attracting neighborhood $U$ of $\Lambda$
and a $C^r$-smooth cross-section $\Sigma\subset U$ such that:
    \begin{itemize}
        \item There exists a unique singularity $\sigma \in \Lambda$ with three exponents $\lambda_1 < \lambda_2 < 0 < \lambda_3$, which satisfy $\lambda_1 + \lambda_3 < 0$ and $\lambda_2 + \lambda_3 > 0$.
        \item $\Sigma$ is $C^1$-diffeomorphic to $[-1, 1] \times [-1, 1]$, and $l = \{0\} \times [-1, 1] = W^s_{\text{loc}}(\sigma) \cap \Sigma$. Moreover, for every $z \in U \setminus W^s_{\text{loc}}(\sigma)$, there exists $t > 0$ such that $\phi_t^X(z) \in \Sigma$.
        \item Up to the previous identification, the Poincaré map $P : \Sigma \setminus l \to \Sigma$ is a skew-product map
              \begin{equation*}
                  P(x, y) = (f(x), H(x, y)), \quad \forall (x, y) \in [-1, 1]^2 \setminus l.
              \end{equation*}
              Moreover, it satisfies
              \begin{itemize}
                  \item $H(x, y) < 0$ for $x > 0$, and $H(x, y) > 0$ for $x < 0$;
                  \item $\sup_{(x, y) \in \Sigma \setminus l} |\partial H(x, y) / \partial y| < 1$, and $\sup_{(x, y) \in \Sigma \setminus l} |\partial H(x, y) / \partial x| < 1$;
                  \item the one-dimensional quotient map $f : [-1, 1] \setminus \{0\} \to [-1, 1]$ is $C^1$-smooth and satisfies $\lim_{x \to 0^-} f(x) = 1$, $\lim_{x \to 0^+} f(x) = -1$, $-1 < f(x) < 1$ and $f'(x) > \sqrt{2}$ for every $x \in [-1, 1] \setminus \{0\}$.
              \end{itemize}
    \end{itemize}
\end{definition}

The lower-bound argument will be carried out on a sequence of mixing horseshoes contained in $\Lambda$, rather than directly on the ambient singular hyperbolic attractor. We therefore recall the specification property and the fact that every mixing horseshoe, being a compact locally maximal hyperbolic set, has the periodic specification property.

\subsection{Specification property}

\begin{definition}[see \cite{fisher2019hyperbolic}*{Definition 7.3.2}] \label{def:specification}
    A flow $\Phi$ on a compact metric space $K$ satisfies the \emph{specification property} if, for any $\ve > 0$, there is a $T_{\ve}>0$ such that for any $x_0, \dots, x_n \in K$ and any $t_0, \dots, t_{n+1} \in [0, \infty)$ with $t_{i+1} \geq t_i + T_{\ve}$, there is $y \in K$ with
    \[d\left(\phi_t(y), \phi_{t-t_i}(x_i)\right)  < \ve \quad \text{for\ any\ } t \in \left[t_i, t_{i+1} - T_{\ve}\right].\]
    We also say that $y$ $\ve$-shadows the orbit segments and denote by
    \[\overset{t_0}{\longrightarrow}\left(x_0,t_1-t_0-T_\ve\right)
        \overset{T_\ve}{\longrightarrow}\left(x_1,t_2-t_1-T_\ve\right)
        \overset{T_\ve}{\longrightarrow}\dots
        \overset{T_\ve}{\longrightarrow}\left(x_n,t_{n+1}-t_n-T_\ve\right).\]
    If further $y$ can be chosen to be a periodic point of $\Phi$ with period in $[t_{n+1}-t_{0}-\varepsilon,t_{n+1}-t_{0}+\varepsilon]$, then we say $\Phi$ satisfies the \emph{periodic specification property}.
\end{definition}

\begin{theorem}[see \cite{fisher2019hyperbolic}*{Theorem 7.3.4}] \label{mixing_implies_specification}
    If $\Lambda$ is a mixing compact locally maximal hyperbolic set for $\Phi$, then $\Phi|_{\Lambda}$ has the periodic specification property.
\end{theorem}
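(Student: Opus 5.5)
This is a cited result (Fisher--Hasselblatt, Theorem 7.3.4), so the proposal follows the standard route. The plan has three parts: reduce to uniformly mixing local product structure, build the specification orbit by gluing through unstable and stable manifolds, then close up to get periodicity.

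\emph{Step 1 (mixing gives uniform transitions).} Since $\Lambda$ is locally maximal and hyperbolic, it carries a local product structure. For $\delta>0$ small there is $\epsilon'>0$ such that for $x,y\in\Lambda$ with $d(x,y)<\epsilon'$ the bracket $[x,y]=W^{s}_\delta(\phi_{t(x,y)}x)\cap W^u_\delta(y)$ exists in $\Lambda$, with $|t(x,y)|$ small. By Theorem \ref{incommensurates_implies_mixing}, mixing implies that every $W^u(p)\cap\Lambda$ and $W^s(p)\cap\Lambda$ is dense for periodic $p$. From this, compactness, and a finite cover of $\Lambda$ by $\epsilon'$-balls, I would extract a uniform time $T_\epsilon$ with the following property: for every $x,y\in\Lambda$ and every $t\ge T_\epsilon$, the set $\phi_t(W^u_{\epsilon'}(x))\cap\Lambda$ is $\epsilon'$-dense. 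This is the flow analogue of the uniform mixing lemma. Incommensurability is what makes it work for all real $t\ge T_\epsilon$, not just a lattice of times; without it, mixing would fail.

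\emph{Step 2 (gluing).} Given segments $(x_i,t_i)$, construct $y$ inductively.
\begin{itemize}
\item At stage $i$, we have $y_i$ shadowing the first $i$ segments.
\item By Step 1, $\phi_{t_{i+1}-t_i}(W^u_{\epsilon'}(\phi_{t_i-t_{i-1}}\cdots))$ comes $\epsilon'$-close to $x_{i+1}$.
\item Take the bracket with $x_{i+1}$. This yields a point whose backward orbit stays on the unstable leaf of $y_i$ and whose forward orbit lies on the weak-stable leaf of $x_{i+1}$.
\end{itemize}
The time shift from the bracket is absorbed into the transition gap $T_\epsilon$. Exponential contraction along $W^s$ forward and $W^u$ backward makes the accumulated errors a geometric series, bounded by $C\epsilon'\sum\lambda^{k}<\epsilon$ once $\epsilon'$ is chosen relative to $\epsilon$.

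\emph{Step 3 (periodicity).} Close the final segment back to the first one, again using a transition time $T_\epsilon$. This produces a $\epsilon$-pseudo-orbit that closes up. By the Anosov closing lemma (shadowing for hyperbolic flows), together with local maximality to keep the orbit in $\Lambda$, it is shadowed by a genuine periodic orbit in $\Lambda$. Its period is within $\epsilon$ of $t_{n+1}-t_0$, since shadowing for flows permits only a small reparametrization. Here $T_\epsilon$ should be enlarged, and $\epsilon$ replaced by a smaller constant, so that the shadowing constant fits.

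The main obstacle is Step 1: obtaining transitions of \emph{every} sufficiently large real length, uniformly. I would handle it by combining density of $W^u(p)$ with the incommensurate periods. Flowing along two periodic orbits whose periods generate a dense subgroup lets one adjust the transition time to any value in a small interval. Uniformity in the base points then follows by compactness.
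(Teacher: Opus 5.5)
Your outline is correct and matches the standard argument behind the cited result; the paper gives no proof of its own, only the reference to Fisher--Hasselblatt. The structure is right: uniform $\epsilon'$-density of flowed strong-unstable discs from mixing and compactness, bracket gluing with backward contraction along $W^u$ and forward contraction along $W^s$, then the single-jump Anosov closing lemma with local maximality. Two points need stating precisely: in Step 2 the unstable disc must be centred at the end of the $i$-th shadowing window, $\phi_{t_{i+1}-T_\epsilon}(y_i)$, and flowed only for the gap $T_\epsilon$ (a disc at time $t_i$ flowed for $t_{i+1}-t_i$ would lose the $i$-th segment), and the bracket time shift $s_i=O(\epsilon')$ is not absorbed into the gap but costs at most $|s_i|\sup_M\|X\|$ in the exact-time estimate $d(\phi_t(y),\phi_{t-t_i}(x_i))$, which does not accumulate because each gluing is anchored at the prescribed time $t_{i+1}$.
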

We next recall the entropy notions needed to formulate the upper and lower bounds for the noncompact saturated sets.

\subsection{Entropy for noncompact sets}
In this subsection, we recall the definition of topological entropy for noncompact sets.
\subsubsection{Entropy for discrete-time continuous maps}
The original definition of topological entropy for noncompact sets was introduced by Bowen \cite{bowen1973topological} for discrete-time continuous maps. Let $f:K \to K$ be a continuous map on a compact metric space $(K,d)$.

For any $x \in K$, $n \in \Nb$ and any $\ve > 0$, define
\[B_n(x,\ve,f) = \left\{y \in K: d\left(f^ix, f^iy\right) < \ve \mathrm{\ for\ any\ } i \in \left\{0,1,\dots,n-1\right\} \right\}.\]
\begin{definition}\label{def:htop}
    Let $E \subseteq K$, $\ve > 0$, and $\fF_n(E,\ve)$ be the collection of all finite or countable covers of $E$ by sets of the form $B_m(x,\ve,f)$ with $m \geq n$.
    $$C(E;s,n,\ve
        ,f) = \inf_{\mathcal{C} \in \fF_n(E,\ve)}\left\{\sum_{B_m(x,\ve,f) \in \mathcal{C}} e^{-sm}\right\}$$
    and
    $$C(E;s,\ve,f) = \lim_{n\to\infty}C(E;s,n,\ve,f).$$
    Let
    $$\htop(f,E,\ve)=\inf\left\{s:C(E;s,\ve,f) = 0\right\} = \sup\left\{s:C(E;s,\ve,f) = \infty\right\}.$$
    The \emph{(Bowen) topological entropy} of $E$ with respect to $f$ is defined by
    $$\htop(f,E) = \lim_{\ve\to 0}\htop(f,E,\ve).$$
\end{definition}

Next, we recall the definition of entropy-expansive maps. For any $x \in K$ and any $\ve > 0$, let $B_{\infty}(x,\ve,f) = \left\{y \in K: d\left(f^n x,f^n y\right) \leq \ve \text{ for any } n \in \Nb \right\}$.

\begin{definition}
    We say $\left(K,f\right)$ is entropy-expansive if there exists $\ve > 0$ such that
    \[\sup_{x \in K} \htop\left(f,B_{\infty}(x,\ve,f)\right) = 0.\]
    We say $\left(K,f\right)$ is asymptotically entropy-expansive if
    \[\lim_{\ve \to 0} \sup_{x \in K} \htop\left(f,B_{\infty}(x,\ve,f)\right) = 0.\]
\end{definition}
\begin{remark*}
    When $f$ is a homeomorphism, one may replace $B_{\infty}(x,\ve,f)$ by
    \[B^{\pm}_{\infty}(x,\ve,f) := \left\{y \in K: d\left(f^n x,f^n y\right) \leq \ve \text{ for any } n \in \Zb\right\}.\]
    Indeed, Bowen \cite{bowen1972entropy}*{Corollary 2.3} proved that
    \[\sup_{x \in K} \htop\left(f,B_{\infty}(x,\ve,f)\right) = \sup_{x \in K} \htop\left(f,B^{\pm}_{\infty}(x,\ve,f)\right)\]
    for every $\ve > 0$.
\end{remark*}

When there is no ambiguity, we also say $f$ is entropy-expansive or asymptotically entropy-expansive and omit the space $K$.

\subsubsection{Entropy for flows}
In this subsection, we recall the definition of topological entropy for noncompact sets for flows. Readers may refer to \cite{pacifico2022bowen}*{Section 3.2} for details.

Let $\Phi = \left(\phi_t\right)_{t \in \Rb}$ be a continuous flow on a compact metric space $K$. For any $x \in K$, $t \in \Rb^+$ and any $\ve > 0$, define
\[B_t(x,\ve,\Phi) = \left\{y \in K: d\left(\phi_s x, \phi_s y\right) < \ve \mathrm{\ for\ any\ } s \in [0,t] \right\}.\]

Let \(\mathcal{U}\) be a finite open cover of \(K\). If \(B \subseteq K\), we say \(B \prec \mathcal{U}\) if \(B\) is contained in some element of \(\mathcal{U}\). We say a collection \(\{B_i:i \in I\} \prec \mathcal{U}\) if each \(B_i\) is contained in some element of \(\mathcal{U}\).

Now, we recall some notations and definitions.
\begin{enumerate}[label=(\roman*)]
    \item \(N(B) = N(\Phi, \mathcal{U}, B)\) is defined as the largest non-negative number such that \(\phi_t(B) \prec \mathcal{U}\) for any \(0 \leq t < N(B)\). We define \(N(B) = 0\) if \(B \not\prec \mathcal{U}\) and \(N(B) = \infty\) if \(\phi_t(B) \prec \mathcal{U}\) for any \(t \geq 0\).

    \item \(\mathsf{d}(B) := \mathsf{d}(\Phi, \mathcal{U}, B) =
          \begin{cases}
              e^{-N(B)} & \text{if } N(B) \text{ is finite}, \\
              0         & \text{if } N(B) = \infty.
          \end{cases}\)

    \item \(S_{\alpha}(B) := S_{\alpha}(\Phi, \mathcal{U}, B) = \sum_{i \in \Nb^+} \mathsf{d}\left(B_i\right) ^{\alpha}\) if \(\alpha \in \mathbb{R}\) and \(B = \left\{B_i:i \in \Nb^+\right\} \) is a countable collection of subsets of \(K\).
\end{enumerate}

For any \(Y \subseteq K\), define
\[\Gamma_{\alpha}(Y) := \Gamma_{\alpha}(\Phi, \mathcal{U}, Y) = \lim_{\varepsilon \to 0} \inf\left\{S_{\alpha}(\xi) : \xi = \left\{E_i\right\} _{i \geq 1} \text{ is a cover of } Y \text{ with } \mathsf{d}\left(E_i\right) < \varepsilon\right\}.\]
Note that \(\Gamma_{\alpha}\) is an exterior measure and satisfies \(\Gamma_{\alpha}(Y) \leq \Gamma_{\beta}(Y)\) if \(\alpha > \beta\). Moreover, \(\Gamma_{\alpha}(Y) \notin \{0, \infty\}\) for at most one unique value \(\alpha\). Next, define
\[
    h_{\mathcal{U}}(\Phi, Y) := \inf\left\{\alpha : \Gamma_{\alpha}(Y) = 0\right\}.
\]
The \emph{topological entropy of \(\Phi\) over \(Y\)} is defined by
\[
    \htop(\Phi, Y) := \sup\left\{h_{\mathcal{U}}(\Phi, Y) : \mathcal{U} \text{ is a finite open cover of } K \right\}.
\]

Here, we list some basic facts about the topological entropy.

\begin{lemma}[see \cite{pacifico2022bowen}*{Theorem A, Proposition 3.6}]\label{lem:htop}
    Let $\Phi = \left(\phi_t\right)_{t \in \Rb}$ be a continuous flow on a compact metric space $K$. Then the following statements hold for any $Y, Z \subseteq K$.
    \begin{enumerate}[label=(\roman*)]
        \item \(\htop(\Phi,Y) = \htop\left(\phi_1, Y\right)\).
        \item If \(Y \subseteq Z\), then \(\htop(\Phi, Y) \leq \htop(\Phi, Z)\).
        \item If \(Y = \bigcup_{i=1}^\infty Y_i\), then \(\htop\left(\Phi, Y\right) = \sup_{1 \leq i \leq \infty}\htop\left(\Phi, Y_i\right)\).
        \item \(\htop(\Phi,\emptyset) = 0\).
    \end{enumerate}
\end{lemma}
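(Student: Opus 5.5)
Parts (ii) and (iv) follow directly from the definitions, and part (iii) follows from the fact that $\Gamma_\alpha$ is an outer measure. Part (i) is the only substantial one: I would compare the open-cover definition for the flow with the Bowen-ball definition for $\phi_1$ (Definition \ref{def:htop}), using uniform continuity and Lebesgue numbers.

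\textbf{Parts (ii) and (iv).} If $Y\subseteq Z$, every countable cover of $Z$ with $\mathsf d(E_i)<\varepsilon$ also covers $Y$. Hence $\Gamma_\alpha(\Phi,\mathcal U,Y)\le\Gamma_\alpha(\Phi,\mathcal U,Z)$ for every $\alpha$ and every $\mathcal U$, which gives $h_{\mathcal U}(\Phi,Y)\le h_{\mathcal U}(\Phi,Z)$; taking the supremum over $\mathcal U$ gives (ii). For (iv), the empty set is covered by the collection consisting only of empty sets. Every empty set is contained in some element of $\mathcal U$ for all times, so $N=\infty$ and $\mathsf d=0$. Thus $\Gamma_\alpha(\emptyset)=0$ for every $\alpha$, and the infimum defining $h_{\mathcal U}$ gives entropy $0$ (with the convention that entropy is nonnegative).

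\textbf{Part (iii).} Monotonicity (ii) gives the inequality $\ge$. For the reverse, fix $\mathcal U$ and any $\alpha>\sup_i h_{\mathcal U}(\Phi,Y_i)$. Then $\Gamma_\alpha(Y_i)=0$ for all $i$. Since $\Gamma_\alpha$ is countably subadditive, $\Gamma_\alpha(Y)\le\sum_i\Gamma_\alpha(Y_i)=0$. Subadditivity holds because, for fixed $\varepsilon$, one concatenates near-optimal covers of the $Y_i$ whose sums are at most $2^{-i}\eta$. Hence $h_{\mathcal U}(\Phi,Y)\le\sup_i h_{\mathcal U}(\Phi,Y_i)\le\sup_i\htop(\Phi,Y_i)$. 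Taking the supremum over $\mathcal U$ gives (iii).

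\textbf{Part (i).} This is the main step. I would prove the two inequalities separately.

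For $\htop(\Phi,Y)\le\htop(\phi_1,Y)$, fix a finite open cover $\mathcal U$ with Lebesgue number $2\varepsilon$. By uniform continuity of $(s,x)\mapsto\phi_s(x)$ on $[0,1]\times K$, choose $\delta\le\varepsilon$ such that $d(x,y)<\delta$ implies $d(\phi_sx,\phi_sy)<\varepsilon$ for all $s\in[0,1]$. Then $B_m(x,\delta,\phi_1)\subseteq B_m(x,\varepsilon,\Phi)$ up to a bounded time shift. For every $t<m$, the set $\phi_t(B_m(x,\delta,\phi_1))$ lies in the $\varepsilon$-ball around $\phi_t x$, hence inside one element of $\mathcal U$. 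So $N\ge m$ and $\mathsf d\le e^{-m}$. A cover of $Y$ from $\fF_n(Y,\delta)$ is therefore an admissible cover for $\Gamma_\alpha$ with $S_\alpha\le\sum e^{-\alpha m}$. This yields $h_{\mathcal U}(\Phi,Y)\le\htop(\phi_1,Y,\delta)\le\htop(\phi_1,Y)$.

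For the reverse inequality, fix $\varepsilon$ and take $\mathcal U$ with every element of diameter $<\varepsilon$. Let $E$ be a set with $N(E)\ge m$, where $m$ is an integer with $m\le N(E)<m+1$, and pick any $x\in E$. Then $\phi_k(E)$ has diameter $<\varepsilon$ for $0\le k\le m-1$, so $E\subseteq B_m(x,\varepsilon,\phi_1)$, and $\mathsf d(E)\ge e^{-1}e^{-m}$. A cover $\{E_i\}$ of $Y$ with small $\mathsf d$ thus produces a cover by Bowen balls of large order, and $\sum e^{-\alpha m_i}\le e^{\alpha}S_\alpha(\{E_i\})$. This gives $\htop(\phi_1,Y,\varepsilon)\le h_{\mathcal U}(\Phi,Y)\le\htop(\Phi,Y)$. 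Letting $\varepsilon\to0$ completes (i).

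I expect the only delicate point to be this bookkeeping between the integer orders $m$ and the real numbers $N(E)$, together with the mismatch between open covers and $\varepsilon$-balls. Both only introduce constant factors $e^{\pm\alpha}$, which do not affect the critical exponents.
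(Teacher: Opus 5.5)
Your proof is correct. The paper gives no argument of its own here: it simply cites Theorem~A and Proposition~3.6 of Pacifico--Sanhueza. So your write-up is a self-contained substitute for the citation rather than a variant of a proof in the text.

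Your route has two parts. You derive (ii)--(iv) from the fact that $\Gamma_\alpha$ is a monotone, countably subadditive outer measure. You prove (i) by a two-sided comparison of covers:
\begin{itemize}
\item uniform continuity of $(s,x)\mapsto\phi_s(x)$ on $[0,1]\times K$ gives $N(B_m(x,\delta,\phi_1))\ge m$;
\item conversely, a set with $m\le N(E)<m+1$, for a cover of mesh $<\varepsilon$, lies in a single $B_m(x,\varepsilon,\phi_1)$, at the cost of a factor $e^{\alpha}$.
\end{itemize}
This links the open-cover definition for the flow directly to the Bowen-ball Definition~\ref{def:htop} that the paper uses for $\phi_1$. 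It therefore does not depend on which equivalent definition of $\htop(\phi_1,\cdot)$ the cited source works with; the citation buys only brevity.

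Two technical points need tidying when you write it out.
\begin{itemize}
\item In the second half of (i), treat separately the cover elements with $N(E_i)=\infty$. They have $\mathsf d(E_i)=0$ but no integer $m$ with $m\le N(E_i)<m+1$. Assign them orders $m_i\ge n$ with $\sum_i e^{-\alpha m_i}$ arbitrarily small; this works because only $\alpha>h_{\mathcal U}(\Phi,Y)\ge 0$ matters. Your comparisons $\mathsf d^\alpha\le e^{-\alpha m}$ and $e^{-\alpha m}\le e^{\alpha}\mathsf d^\alpha$ use $\alpha\ge 0$ anyway.
\item Take $2\varepsilon$ strictly below a Lebesgue number of $\mathcal U$, since a subset of an open $\varepsilon$-ball can have diameter exactly $2\varepsilon$.
\end{itemize}
Finally, the inclusion $B_m(x,\delta,\phi_1)\subseteq B_m(x,\varepsilon,\Phi)$ holds exactly, so no time shift is needed.
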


\begin{definition}
    We say $\left(K,\Phi\right)$ is entropy-expansive (resp. asymptotically entropy-expansive) if its time-one map $\phi_1$ is entropy-expansive (resp. asymptotically entropy-expansive). 
\end{definition}

By \cite{bowen1972entropy}*{Example 1.6}, the time-$t$ map of an expansive flow (see \cite{bowen1972expansive}*{Definition 1} for definitions) is entropy-expansive. Therefore, if $\left(K,\Phi\right)$ is expansive, then $\left(K,\Phi\right)$ is entropy-expansive. If $\left(K,\Phi\right)$ is entropy-expansive, then $\left(K,\Phi\right)$ is asymptotically entropy-expansive.

\section{Upper bounds of topological entropy for saturated sets} \label{sec:3}
The purpose of this section is to establish the upper bound for the topological entropy of saturated sets of flows. The main point is to compare continuous-time empirical measures with empirical measures of the time-one map, and then apply Bowen's entropy estimate for quasi-regular sets.

Throughout this section, let $\Phi=(\phi_t)_{t\in\mathbb R}$ be a continuous flow on a compact metric space $K$.  We use the $1$-Wasserstein metric $D$ on $\mathcal M(K)$ induced by the metric of $K$. Here and below, whenever Wasserstein estimates are used, we normalize the ambient metric so that its diameter is at most $1$. 

\begin{theorem}\label{thm:upper_htop}
     For any nonempty compact connected $\mathcal{K} \subseteq \M(\Phi,K)$, we have
    \[\htop\left(\Phi, G_{\mathcal{K}}(\Phi)\right) \leq \inf\left\{h_\mu\left(\phi_1\right):\mu \in \mathcal{K} \right\}.\]
\end{theorem}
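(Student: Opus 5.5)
Our plan is to reduce the statement to Bowen's classical entropy estimate for the time-one map $\phi_1$, via the averaging map $\tau(\nu)=\int_0^1\nu\circ\phi_{-t}\,dt$ from $\M(\phi_1,K)$ to $\M(\Phi,K)$. By Lemma~\ref{lem:htop}(i), $\htop(\Phi,G_{\mathcal K}(\Phi))=\htop(\phi_1,G_{\mathcal K}(\Phi))$, so we may work entirely with the discrete-time Bowen entropy of Definition~\ref{def:htop}. The version of Bowen's lemma we invoke reads: for $t\ge0$, the set $\{x: V_{\phi_1}(x)\cap\{\nu: h_\nu(\phi_1)\le t\}\neq\emptyset\}$ has $\phi_1$-topological entropy at most $t$.

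First we establish the relation between the two empirical measures. For $x\in K$ and $n\in\Nb$ we have
\[
\mathcal{E}_n(x,\Phi)=\frac1n\int_0^n\delta_{\phi_s x}\,ds=\int_0^1 \Bigl(\frac1n\sum_{i=0}^{n-1}\delta_{\phi_{i+t}x}\Bigr)dt=\int_0^1 (\phi_t)_*\mathcal{E}_n(x,\phi_1)\,dt=\tau\bigl(\mathcal{E}_n(x,\phi_1)\bigr),
\]
and $\mathcal E_s(x,\Phi)$ differs from $\mathcal E_{\lfloor s\rfloor}(x,\Phi)$ by $O(1/s)$ in total variation. Since $\tau$ is continuous and affine on $\mathcal M(K)$, passing to limits gives $V_\Phi(x)=\tau(V_{\phi_1}(x))$. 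Here every limit point of $\mathcal{E}_n(x,\phi_1)$ is $\phi_1$-invariant, and compactness lets us lift any accumulation point of the flow averages to an accumulation point along a subsequence of the discrete averages.

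Next we compare entropies: for $\nu\in\M(\phi_1,K)$ we need $h_{\tau\nu}(\phi_1)\ge h_\nu(\phi_1)$; in fact equality holds. Each $(\phi_t)_*\nu$ is $\phi_1$-invariant, and since $\phi_t$ conjugates $(\phi_1,\nu)$ to $(\phi_1,(\phi_t)_*\nu)$, we get $h_{(\phi_t)_*\nu}(\phi_1)=h_\nu(\phi_1)$. Because the entropy map is affine, and also respects integral (barycentric) decompositions by the Jacobs-type formula $h_{\int\mu_t\,dt}=\int h_{\mu_t}\,dt$, we obtain $h_{\tau\nu}(\phi_1)=\int_0^1 h_{(\phi_t)_*\nu}(\phi_1)\,dt=h_\nu(\phi_1)$. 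We expect this to be the main technical point. To justify it we would use the ergodic decomposition and the fact that $t\mapsto(\phi_t)_*\nu$ is continuous, so the integral is a genuine barycenter.

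To conclude, fix $\mu\in\mathcal K$. For $x\in G_{\mathcal K}(\Phi)$ we have $\mu\in V_\Phi(x)=\tau(V_{\phi_1}(x))$, so there is $\nu\in V_{\phi_1}(x)$ with $\tau\nu=\mu$, and hence $h_\nu(\phi_1)=h_\mu(\phi_1)$. Therefore $G_{\mathcal K}(\Phi)$ is contained in the set of $x$ whose $\phi_1$-limit set contains a measure of entropy at most $h_\mu(\phi_1)$. Bowen's lemma then gives $\htop(\phi_1,G_{\mathcal K}(\Phi))\le h_\mu(\phi_1)$. Taking the infimum over $\mu\in\mathcal K$ proves the theorem. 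Neither compactness nor connectedness of $\mathcal K$ is needed beyond nonemptiness.
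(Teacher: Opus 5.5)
Your proposal is correct and follows essentially the same route as the paper. It reduces to $\phi_1$ via Lemma~\ref{lem:htop}, uses the averaging map $\tau$ to get $V_\Phi(x)=\tau(V_{\phi_1}(x))$ and $h_{\tau\nu}(\phi_1)=h_\nu(\phi_1)$ (Lemma~\ref{lem:tau}), and concludes with Bowen's estimate for $QR(t)$ (Lemma~\ref{lem:Bowen}). The differences are cosmetic: you show $G_{\mathcal K}(\Phi)\subseteq QR(h_\mu(\phi_1))$ for each fixed $\mu\in\mathcal K$ instead of using $\inf+\varepsilon$, and you obtain $h_{(\phi_t)_*\nu}(\phi_1)=h_\nu(\phi_1)$ from the conjugacy by $\phi_t$ rather than from the paper's partition computation; the integral formula for $h_{\tau\nu}$ is the same step the paper justifies by affinity.
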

This upper bound is completely
general: it does not use hyperbolicity, expansiveness or specification. The rest of the section is devoted to the proof of this estimate.

\subsection{Reduction to the time-one map}
For $a\geq0$, define the \emph{quasi-regular set} by
\[
QR(a)=
\{x\in K:\ h_\mu(\phi_1)\leq a
\text{ for some } \mu\in V_{\phi_1}(x)\}.
\]
This set records points whose discrete-time empirical measures admit an
accumulation measure with entropy at most $a$.  The following lemma is the
only step in the upper-bound argument where the continuous-time saturated
condition is converted into such a discrete-time condition.
\begin{lemma}\label{lem:saturated-contained-qr}
    For any nonempty compact connected $\mathcal{K} \subseteq \M(\Phi,K)$, we have
    \[G_\mathcal{K}(\Phi) \subseteq QR\left(\inf\left\{h_\mu\left(\phi_1\right):\mu \in \mathcal{K} \right\}  + \ve\right)\]
    for any $\ve > 0$.
\end{lemma}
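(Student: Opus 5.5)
Fix $x\in G_{\mathcal K}(\Phi)$ and $\ve>0$, and set $a=\inf\{h_\mu(\phi_1):\mu\in\mathcal K\}$. We must produce some $\nu\in V_{\phi_1}(x)$ with $h_\nu(\phi_1)\le a+\ve$. The natural bridge is the averaging map $\tau(\nu)=\int_0^1\nu\circ\phi_{-t}\,dt$ from $\M(\phi_1,K)$ to $\M(\Phi,K)$. Two facts about $\tau$ drive the argument:
\begin{itemize}
\item $\tau$ preserves entropy, $h_{\tau(\nu)}(\phi_1)=h_\nu(\phi_1)$;
\item $\tau\bigl(V_{\phi_1}(x)\bigr)=V_\Phi(x)$.
\end{itemize}
Both are announced in the introduction as Lemma \ref{lem:tau}; since that lemma is stated later, I will prove the needed parts here.

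\emph{Step 1: $\tau(V_{\phi_1}(x))\subseteq V_\Phi(x)$ and conversely.} For integer $n$ we have
\[
\mathcal E_n(x,\Phi)=\frac1n\sum_{i=0}^{n-1}\int_0^1\delta_{\phi_{i+s}x}\,ds=\int_0^1(\phi_s)_*\mathcal E_n(x,\phi_1)\,ds=\tau\bigl(\mathcal E_n(x,\phi_1)\bigr),
\]
where $\tau$ is extended by the same formula to all of $\M(K)$. This extension is continuous in the weak* topology, by uniform continuity of $(s,y)\mapsto g(\phi_s y)$ for each $g\in C(K)$. Also, $D(\mathcal E_t(x,\Phi),\mathcal E_{\lfloor t\rfloor}(x,\Phi))\le 2/t$, so continuous-time accumulation points coincide with accumulation points along integer times. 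Now let $\mu\in V_\Phi(x)$. Pick integers $n_k$ with $\mathcal E_{n_k}(x,\Phi)\to\mu$, and pass to a subsequence along which $\mathcal E_{n_k}(x,\phi_1)\to\nu$. Then $\nu\in V_{\phi_1}(x)$, and by continuity $\tau(\nu)=\mu$. The reverse inclusion is proved the same way.

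\emph{Step 2: entropy.} Let $\nu$ be $\phi_1$-invariant. Then $(\phi_s)_*\nu$ is $\phi_1$-invariant, and $\phi_s$ conjugates $(\phi_1,\nu)$ to $(\phi_1,(\phi_s)_*\nu)$, so $h_{(\phi_s)_*\nu}(\phi_1)=h_\nu(\phi_1)$. The measure $\tau(\nu)$ is a barycenter of the family $(\phi_s)_*\nu$, $s\in[0,1]$. Since entropy is affine, and this extends to integrals via the ergodic decomposition (Jacobs' theorem), we get $h_{\tau(\nu)}(\phi_1)=\int_0^1 h_{(\phi_s)_*\nu}(\phi_1)\,ds=h_\nu(\phi_1)$. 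Measurability of $s\mapsto h_{(\phi_s)_*\nu}$ is trivial here because the function is constant.

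\emph{Step 3: conclusion.} Choose $\mu\in\mathcal K=V_\Phi(x)$ with $h_\mu(\phi_1)<a+\ve$. By Step 1 there is $\nu\in V_{\phi_1}(x)$ with $\tau(\nu)=\mu$. By Step 2, $h_\nu(\phi_1)=h_\mu(\phi_1)\le a+\ve$. Hence $x\in QR(a+\ve)$.

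The main obstacle is Step 2. Passing affinity of entropy to a continuous barycenter requires care. One alternative is to note that $\tau(\nu)$ is the $\phi_1$-projection of the suspension-type measure $\nu\times\mathrm{Leb}_{[0,1]}$ under $(y,s)\mapsto\phi_s y$, and then use Abramov-type arguments. The Jacobs ergodic-decomposition route seems cleanest. Even then, only the inequality $h_{\tau(\nu)}\ge h_\nu$ is really needed to get $h_\nu\le a+\ve$, and that inequality follows from upper semicontinuity-free concavity arguments. Step 1 is routine.
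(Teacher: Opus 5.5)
Your proposal is correct and follows the paper's proof. The paper likewise reduces to the two properties $\tau(V_{\phi_1}(x))=V_\Phi(x)$ and $h_{\tau\nu}(\phi_1)=h_\nu(\phi_1)$ (Lemma~\ref{lem:tau}, items (\ref{item:4.1.7}) and (\ref{item:4.1.6})) and then picks a near-minimizing measure in $V_{\phi_1}(x)$; your inline proofs of these two facts agree with the paper's in substance, and your continuity argument via $y\mapsto\int_0^1 g(\phi_s y)\,ds$ and your conjugacy by $\phi_s$ are slightly cleaner than its dominated-convergence and partition computations.
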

Once this inclusion is available, the desired entropy bound follows from the
following estimate for the sets $QR(t)$.  
\begin{lemma}[\cite{bowen1973topological}*{Theorem 2}]\label{lem:Bowen}
    For any $t \geq 0$, we have
    \[\htop\left(\phi_1,QR(t)\right) \leq t.\]
\end{lemma}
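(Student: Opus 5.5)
The plan is to follow Bowen's original counting argument, working entirely with the homeomorphism $f=\phi_1$ on $K$. Fix $t\ge 0$, $s>t$, a small $\delta>0$ with $t+3\delta<s$, and a scale $\ve>0$. It suffices to show that $C(QR(t);s,\ve,\phi_1)=0$, since then $\htop(\phi_1,QR(t),\ve)\le s$ for every $\ve$, and letting $\ve\to0$, $s\downarrow t$ gives the claim. Fix a finite open cover $\alpha=\{A_1,\dots,A_r\}$ of $K$ by sets of diameter $<\ve$. Each $x$ then has (non-unique) $\alpha$-names $a_0a_1\cdots$ with $f^ix\in A_{a_i}$. Any set of points sharing a common length-$n$ name $a_0\cdots a_{n-1}$ is contained in a single Bowen ball $B_n(y,\ve,\phi_1)$ with $y$ one of those points.

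\emph{Step 1: entropy bound turns into a block-frequency condition.} Take $x\in QR(t)$ and $\mu\in V_{\phi_1}(x)$ with $h_\mu(\phi_1)\le t$, realized along $n_k\to\infty$. Choose a Borel partition $\beta$ subordinate to $\alpha$ (each element inside some $A_j$) with $\mu(\partial\beta)=0$. Then choose $N$ with $\frac1N H_\mu(\beta^N)<t+\delta$, where $\beta^N=\bigvee_{i=0}^{N-1}f^{-i}\beta$. Because $\mu(\partial\beta^N)=0$ and $\mathcal{E}_{n_k}(x,\phi_1)\to\mu$, the empirical frequency of $N$-blocks in the $\beta$-name of $x$ up to time $n_k$ converges to the $\mu$-distribution on $\beta^N$. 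Hence for large $k$ the block distribution $p$ satisfies $\frac1N H(p)<t+2\delta$. Since $\beta$ refines $\alpha$, the $\beta$-name induces an $\alpha$-name with the same property.

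The main obstacle is that $\beta$ and $N$ depend on $\mu$, hence on $x$. I would handle this by noting that only the data $(N,\ \text{the induced $\alpha$-name})$ matter for counting. So $QR(t)\subseteq\bigcup_{N\ge1}Q_N$, where $Q_N$ is the set of $x$ such that, for infinitely many $n$, some length-$n$ $\alpha$-name of $x$ has $N$-block empirical distribution of entropy $<N(t+2\delta)$. By Lemma~\ref{lem:htop}(iii), which is countable stability for the flow and transfers to $\phi_1$ by Lemma~\ref{lem:htop}(i), it suffices to bound each $Q_N$.

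\emph{Step 2: combinatorial count and cover.} For fixed $N$ I would invoke the standard type-counting estimate. The number of words $w\in\{1,\dots,r\}^n$ whose $N$-block distribution (taken over the $N$ residue classes of starting positions, then averaged) has entropy $<N(t+2\delta)$ is at most $\exp\bigl(n(t+3\delta)\bigr)$ for $n\ge n_1(N,\delta,r)$. This follows from counting types (polynomially many in $n$) and bounding words of a given type by $e^{nH/N}$, up to an $O(N/n)$ edge loss.

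For $n_0\ge n_1$, cover $Q_N$ by the Bowen balls $B_n(y_w,\ve,\phi_1)$, $n\ge n_0$, one for each admissible word $w$ realized by some point. Every $x\in Q_N$ lies in such a ball for some $n\ge n_0$, so
\[
C(Q_N;s,n_0,\ve,\phi_1)\le\sum_{n\ge n_0}e^{n(t+3\delta)}e^{-sn}\xrightarrow[n_0\to\infty]{}0 .
\]
Therefore $\htop(\phi_1,Q_N,\ve)\le s$, hence $\htop(\phi_1,QR(t))\le s$, and letting $s\downarrow t$ finishes the proof.

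I expect the genuinely delicate step to be Step 1, namely passing from $h_\mu\le t$ to a uniform block-entropy inequality along the orbit of $x$. This requires the null-boundary partition subordinate to a fixed cover, and checking that the induced $\alpha$-block frequencies inherit the entropy bound. By contrast, the combinatorial estimate in Step 2 is routine.
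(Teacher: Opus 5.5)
Your proposal is correct and is essentially Bowen's own counting argument for \cite{bowen1973topological}*{Theorem 2}, which the paper quotes without reproducing: a null-boundary partition subordinate to a fixed cover, low block-entropy names along $n_k$, a countable union over block lengths $N$, and type counting to cover by at most $e^{n(t+3\delta)}$ Bowen balls. Two small points should be stated explicitly: in Step 2, use concavity of $H$ to pass from the averaged block distribution to a single residue class before counting types; and since $Q_N$ depends on $\alpha$, hence on $\ve$, handle the union over $N$ by the fixed-scale subadditivity $C\bigl(\bigcup_N Q_N;s,\ve,\phi_1\bigr)\le\sum_N C(Q_N;s,\ve,\phi_1)$ (immediate from the definition) rather than by the countable stability of $\htop$ in Lemma~\ref{lem:htop}(iii).
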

Combining these two lemmas with the equality
$h_{\operatorname{top}}(\Phi,Y)=h_{\operatorname{top}}(\phi_1,Y)$ for $Y\subset K$ gives the announced upper bound.

\begin{proof}[Proof of Theorem~\ref{thm:upper_htop}]
If $\inf_{\mu\in\mathcal K}h_\mu(\phi_1)=+\infty$, there is nothing to
prove.  Otherwise, fix $\varepsilon>0$.  By Lemma~\ref{lem:htop},
\[
h_{\operatorname{top}}\bigl(\Phi,G_{\mathcal K}(\Phi)\bigr)
=
h_{\operatorname{top}}\bigl(\phi_1,G_{\mathcal K}(\Phi)\bigr).
\]
By Lemma~\ref{lem:saturated-contained-qr} and the monotonicity of topological
entropy for noncompact sets,
\[
h_{\operatorname{top}}\bigl(\phi_1,G_{\mathcal K}(\Phi)\bigr)
\leq
h_{\operatorname{top}}\left(
\phi_1,
QR\left(\inf_{\mu\in\mathcal K}h_\mu(\phi_1)+\varepsilon\right)
\right).
\]
Applying Lemma~\ref{lem:Bowen}, we obtain
\[
h_{\operatorname{top}}\bigl(\Phi,G_{\mathcal K}(\Phi)\bigr)
\leq
\inf_{\mu\in\mathcal K}h_\mu(\phi_1)+\varepsilon.
\]
Since $\varepsilon>0$ is arbitrary, the proof is complete.
\end{proof}
It remains to prove Lemma~\ref{lem:saturated-contained-qr}.  We now prepare the two ingredients needed to compare the continuous-time and discrete-time empirical behaviors.  The first lemma controls the change of empirical measures after removing a small part of a time interval or replacing an orbit segment by a nearby one.  The second lemma studies the averaging map which sends $\phi_1$-invariant measures to $\Phi$-invariant measures.

\subsection{Proofs of the auxiliary lemmas}
\subsubsection{Comparison of empirical measures}
We begin with an elementary estimate for empirical measures.  It says that discarding a small proportion of a time interval changes the empirical measure by a small amount in the 1-Wasserstein metric, and that two close orbit pieces have close empirical measures.

We shall use the following elementary properties of the 1-Wasserstein metric $D$, which follow directly from the definition, see \cite{villani2009optimal}*{Section 6}.  
\begin{itemize}
    \item for any $x,y\in K$, $D(\delta_x,\delta_y)=d(x,y)$.
    \item under the assumption that $\operatorname{diam} K\leq 1$,  $D(\eta,\xi)\le 1$ for all $\eta,\xi\in\mathcal M(K)$.
    \item $D$ is convex in the following sense: if $\mu_i,\nu_i\in\mathcal M(K)$ and $a_i\geq0$ with $\sum_{i=1}^m a_i=1$, then
$$
D\left(\sum_{i=1}^m a_i\mu_i,\sum_{i=1}^m a_i\nu_i\right)
\leq
\sum_{i=1}^m a_iD(\mu_i,\nu_i).
$$
\end{itemize}

\begin{lemma}\label{lem:D}
    Let $\ve > 0$, $0<\delta<1$,$t>0$. Let $\sigma_1:[0,t] \to K$ and $\sigma_2:[0,t] \to K$ be two measurable maps. For any measurable set $I \subseteq [0,t]$ with $\mathrm{Leb}(I) > (1-\delta)t$, one has
    \[D\left(\frac{1}{t}\int_{0}^{t}\delta_{\sigma_1(s)}\dif s, \frac{1}{\mathrm{Leb}(I)}\int_{I}\delta_{\sigma_1(s)}\dif s\right) < \delta.\]
    If further $d\left(\sigma_1(s),\sigma_2(s)\right)  < \ve$ for any $s \in I$, then
    \[D\left(\frac{1}{t}\int_{0}^{t}\delta_{\sigma_1(s)}\dif s, \frac{1}{t}\int_{0}^{t}\delta_{\sigma_2(s)}\dif s\right) < \ve + \delta\]
    and hence
    \[D\left(\frac{1}{t}\int_{0}^{t}\delta_{\sigma_1(s)}\dif s, \frac{1}{\mathrm{Leb}(I)}\int_{I}\delta_{\sigma_2(s)}\dif s\right) < \ve + 2\delta.\]
    In particular, if $d\left(\sigma_1(s),\sigma_2(s)\right)  < \ve$ holds for any $s \in [0,t]$, then
    \[D\left(\frac{1}{t}\int_{0}^{t}\delta_{\sigma_1(s)}\dif s, \frac{1}{t}\int_{0}^{t}\delta_{\sigma_2(s)}\dif s\right) \leq \ve.\]
\end{lemma}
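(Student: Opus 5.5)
The plan is to use the Kantorovich--Rubinstein duality for $D$ (with $\operatorname{diam}K\le 1$): $D(\eta,\xi)=\sup\{\int f\,d\eta-\int f\,d\xi\}$, where the supremum runs over $1$-Lipschitz $f$. All three estimates reduce to bounding $\left|\int f\,d\eta-\int f\,d\xi\right|$ uniformly over such $f$. We may also subtract a constant from $f$ and so assume $\|f\|_\infty\le 1$; in fact, after shifting, $\sup f-\inf f\le 1$.

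For the first inequality, write $\ell=\mathrm{Leb}(I)$ and $g(s)=f(\sigma_1(s))$. The difference of the two integrals is
\[
\frac1t\int_0^t g\,ds-\frac1\ell\int_I g\,ds=\frac1t\int_{[0,t]\setminus I}g\,ds-\Bigl(\frac1\ell-\frac1t\Bigr)\int_I g\,ds .
\]
Shift $f$ so that it takes values in $[c-\tfrac12,c+\tfrac12]$, and note that constants cancel because both measures are probabilities. Equivalently, write the difference as $\frac{t-\ell}{t}\left(\frac{1}{t-\ell}\int_{[0,t]\setminus I}g-\frac1\ell\int_I g\right)$. The bracket is a difference of two averages of $g$, hence at most $\sup g-\inf g\le 1$. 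The whole expression is therefore at most $(t-\ell)/t<\delta$. If $I=[0,t]$ the first term is trivially $0$, so that case is harmless.

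For the second inequality, split the integral over $[0,t]$ into the parts over $I$ and over its complement. On $I$ we have $|f(\sigma_1(s))-f(\sigma_2(s))|\le d(\sigma_1(s),\sigma_2(s))<\varepsilon$, which contributes less than $\varepsilon\,\ell/t\le\varepsilon$. On the complement, the difference $f(\sigma_1)-f(\sigma_2)$ is bounded by $\sup f-\inf f\le 1$, contributing at most $(t-\ell)/t<\delta$. Taking the supremum over $f$ gives a bound $\le\varepsilon\ell/t+(t-\ell)/t$. This is strictly less than $\varepsilon+\delta$ when $\ell<t$. When $\ell=t$, the sup over $f$ of a strict pointwise inequality might only yield $\le\varepsilon$, which is still $<\varepsilon+\delta$ since $\delta>0$.

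The third inequality follows from the triangle inequality. Combine the second estimate with the first estimate applied to $\sigma_2$ in place of $\sigma_1$; the first estimate does not depend on which map is used. The "in particular" statement is the case $I=[0,t]$ with $\ell=t$, where the complement term vanishes and we get $\le\varepsilon$.

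Measurability of $s\mapsto f(\sigma_i(s))$ is needed to make sense of the integrals; it holds because $f$ is continuous and $\sigma_i$ is measurable. The only delicate point is keeping the strict inequalities, and I expect no real obstacle there. The strictness comes from $\mathrm{Leb}(I)>(1-\delta)t$, together with the trivial cases above.
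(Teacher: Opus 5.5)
Your proof is correct. It reaches the same quantitative bounds as the paper, namely $\mathrm{Leb}([0,t]\setminus I)/t$ for the first estimate and $\varepsilon\,\mathrm{Leb}(I)/t+\mathrm{Leb}([0,t]\setminus I)/t$ for the second, but it gets there from the dual side.

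The paper stays with the coupling definition of $D$. It writes $\frac1t\int_0^t\delta_{\sigma_j(s)}\,ds=\alpha\mu_A^j+\beta\mu_I^j$ with $\alpha=\mathrm{Leb}([0,t]\setminus I)/t$, and then uses only convexity of $D$, the bound $D\le 1$, and $D(\delta_x,\delta_y)=d(x,y)$. For example, the first estimate becomes the one-line bound $D(\alpha\mu_A^1+\beta\mu_I^1,\mu_I^1)\le\alpha D(\mu_A^1,\mu_I^1)\le\alpha$.

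You instead test against $1$-Lipschitz functions via Kantorovich--Rubinstein duality. Every estimate then reduces to pointwise bounds on $f\circ\sigma_1$ and $f\circ\sigma_2$, together with $\sup f-\inf f\le\operatorname{diam}K\le 1$, and these integrate trivially.

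What your route buys you is clearer bookkeeping at one step. The paper's inequality $D(\mu_I^1,\mu_I^2)\le\frac{1}{\mathrm{Leb}(I)}\int_I d(\sigma_1(s),\sigma_2(s))\,ds$ is an integral form of convexity, not one of the finite-sum properties it lists. It implicitly needs a coupling, for instance the law of $(\sigma_1(s),\sigma_2(s))$ under normalized Lebesgue measure on $I$. In your argument this step is automatic.

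The price is that you invoke the duality theorem. It is standard on compact metric spaces, but it is a genuine theorem, whereas the paper needs nothing beyond the infimum-over-couplings definition.

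One small point: your case split $\mathrm{Leb}(I)<t$ versus $\mathrm{Leb}(I)=t$ in the second estimate is unnecessary. The bound $\varepsilon\,\mathrm{Leb}(I)/t+(t-\mathrm{Leb}(I))/t$ is always strictly below $\varepsilon+\delta$, because $(t-\mathrm{Leb}(I))/t<\delta$ by hypothesis.
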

\begin{proof}
Set
\[
A=[0,t]\setminus I,\qquad \alpha=\frac{\operatorname{Leb}(A)}{t},\qquad
\beta=\frac{\operatorname{Leb}(I)}{t},
\]
so that $\alpha+\beta=1$ and $\alpha<\delta$.
For $j=1,2$, define
\[
\mu_A^j=\frac{1}{\operatorname{Leb}(A)}\int_A \delta_{\sigma_j(s)}\,ds
\quad\text{(if }\operatorname{Leb}(A)>0\text{)},
\qquad
\mu_I^j=\frac{1}{\operatorname{Leb}(I)}\int_I \delta_{\sigma_j(s)}\,ds.
\]
Then
\[
\frac1t\int_0^t \delta_{\sigma_j(s)}\,ds
=\alpha\,\mu_A^j+\beta\,\mu_I^j.
\]
By the convexity of $D$ and $D(\eta,\xi)\le 1$ for all
$\eta,\xi\in\mathcal M(K)$, we first have
\[
D\!\left(\frac1t\int_0^t \delta_{\sigma_1(s)}\,ds,\mu_I^1\right)
= D(\alpha\mu_A^1+\beta\mu_I^1,\mu_I^1)
\le \alpha D(\mu_A^1,\mu_I^1)
\le \alpha<\delta.
\]
Next, assume $d(\sigma_1(s),\sigma_2(s))<\varepsilon$ for all $s\in I$. Then
by convexity and $D(\delta_x,\delta_y)=d(x,y)$,
\[
D(\mu_I^1,\mu_I^2)
\le \frac{1}{\operatorname{Leb}(I)}\int_I
D(\delta_{\sigma_1(s)},\delta_{\sigma_2(s)})\,ds
= \frac{1}{\operatorname{Leb}(I)}\int_I
d(\sigma_1(s),\sigma_2(s))\,ds
<\varepsilon.
\]
Moreover, 
\[
D\!\left(\frac1t\int_0^t \delta_{\sigma_1(s)}\,ds,
\frac1t\int_0^t \delta_{\sigma_2(s)}\,ds\right)
\le \alpha D(\mu_A^1,\mu_A^2)+\beta D(\mu_I^1,\mu_I^2)
< \alpha+\varepsilon
< \delta+\varepsilon.
\]
Applying the first estimate to $\sigma_2$ gives
\[
D\!\left(\frac1t\int_0^t \delta_{\sigma_2(s)}\,ds,\mu_I^2\right)<\delta.
\]
Therefore, by the triangle inequality,
\[
D\!\left(\frac1t\int_0^t \delta_{\sigma_1(s)}\,ds,\mu_I^2\right)
< \varepsilon+2\delta.
\]

Finally, if $d(\sigma_1(s),\sigma_2(s))<\varepsilon$ for all $s\in[0,t]$,
then taking $I=[0,t]$ yields $\alpha=0$, and thus
\[
D\!\left(\frac1t\int_0^t \delta_{\sigma_1(s)}\,ds,
\frac1t\int_0^t \delta_{\sigma_2(s)}\,ds\right)
\le \frac1t\int_0^t d(\sigma_1(s),\sigma_2(s))\,ds
< \varepsilon.\]
\end{proof}
\begin{remark}\label{rmk:D}
    The above lemma is the flow-version of the following lemma.
    Let $\ve > 0$, $\delta > 0$ and $\left\{x_i\right\}_{i=0}^{n-1}$, $\left\{y_i\right\}_{i=0}^{n-1}$ be two sequences in $K$. For any nonempty set $J \subseteq \left\{0,1,\dots,n-1\right\}$ with $\left(n-\left\lvert J\right\rvert \right) / n < \delta $,
    \[D\left(\frac{1}{n}\sum_{i=0}^{n-1}\delta_{x_i}, \frac{1}{\left\lvert J\right\rvert }\sum_{i \in J}\delta_{x_i}\right) < \delta.\]
    If further $d(x_i, y_i) < \ve$ holds for any $i \in J$, then
    \[D\left(\frac{1}{n}\sum_{i=0}^{n-1}\delta_{x_i}, \frac{1}{n}\sum_{i=0}^{n-1}\delta_{y_i}\right) < \ve + \delta\]
    and hence
    \[D\left(\frac{1}{n}\sum_{i=0}^{n-1}\delta_{x_i}, \frac{1}{\left\lvert J\right\rvert }\sum_{i \in J}\delta_{y_i}\right) < \ve + 2\delta.\]
    In particular, if $d(x_i, y_i) \leq \ve$ holds for any $i \in \left\{0,1,\dots,n-1\right\}$, then
    \[D\left(\frac{1}{n}\sum_{i=0}^{n-1}\delta_{x_i}, \frac{1}{n}\sum_{i=0}^{n-1}\delta_{y_i}\right) \leq \ve.\]
These estimates follow from the same argument as in Lemma~\ref{lem:D}, with normalized counting measure on ${0,\ldots,n-1}$ in place of normalized Lebesgue measure on $[0,t]$; we therefore omit the proof.

\end{remark}
\subsubsection{The averaging map}
We define the averaging map $\tau:\mathcal{M}(K) \rightarrow \mathcal{M}(K)$ as
$$ \tau(\nu)=\int_0^1 \nu\circ\phi_{-t}\,dt. $$
A direct calculation shows that the averaging map relates empirical measures of the time-one map to empirical measures of the flow:
\begin{equation}\label{eq:phi_to_Phi}
    \tau(\mathcal{E}_{n}(x,\phi_{1}))=\int_{0}^{1}\mathcal{E}_{n}(x,\phi_{1}) \circ \phi_{-t}\dif t=\int_{0}^{1}\mathcal{E}_{n}(\phi_{t}x,\phi_{1}) \dif t=\mathcal{E}_{n}(x,\Phi).
\end{equation}

In particular, Lemma~\ref{lem:D} is used to show that, when $t$ is large, the continuous-time empirical measure $\mathcal{E}_t(x,\Phi)$ can be approximated by the averaged discrete-time empirical measure $\tau \mathcal{E}_{\lfloor t\rfloor}(x,\phi_{1})$, where $\left\lfloor t\right\rfloor$ is the greatest integer not exceeding $t$. That is,
\begin{equation} \label{eq:phi_to_Phi2}
    D\left(\cE_{t}(x,\Phi),\tau\cE_{\left\lfloor t\right\rfloor}\left(x,\phi_{1}\right)\right)= D\left(\cE_{t}(x,\Phi),\cE_{\left\lfloor t\right\rfloor}\left(x,\Phi\right)\right)< 1 / t.
\end{equation}

For a set $\mathcal L\subset\mathcal M(\phi_1,K)$, write
\[
\tau\mathcal L=\{\tau\nu:\nu\in\mathcal L\}.
\]
For a set $A\subset K$, write
\[
\phi_{[0,1]}A=\{\phi_t(x):x\in A,\ 0\leq t\leq1\}.
\]
The following lemma collects the basic properties of $\tau$ needed to prove Lemma~\ref{lem:saturated-contained-qr}.

\begin{lemma}\label{lem:tau}
    The averaging map $\tau:\mathcal M(K)\to\mathcal M(K)$ defined above has the following properties.
    \begin{enumerate}
        \item \label{item:4.1.1} $\tau$ is continuous and affine.
        \item \label{item:4.1.3} $\tau\M\left(\phi_1, K\right) = \M\left(\Phi, K\right)$.
        \item \label{item:4.1.4} $\tau\Merg\left(\phi_1, K\right) = \Merg\left(\Phi, K\right)$.
        \item \label{item:4.1.5} $\phi_{[0,1]}\supp(\mu) = \supp(\tau\mu)$ for any $\mu \in \M(K)$. In particular, $\supp(\mu) \subseteq \supp(\tau\mu)$.
        \item \label{item:4.1.6} $h_{\tau\mu}\left(\phi_1\right) = h_{\mu}(\phi_1)$ for any $\mu \in \M\left(\phi_1, K\right)$.
        \item \label{item:4.1.7} $\tau\left(V_{\phi_1}(x)\right)=V_{\Phi}(x)$ for any $x \in K$.
        \item \label{item:4.1.8} $\phi_{[0,1]}G_{\mathcal{K}}\left(\phi_1\right) \subseteq G_{\tau \mathcal{K}}(\Phi)$ for any nonempty compact connected $\mathcal{K} \subseteq \M(\phi_1,K)$. In particular, $G_\mathcal{K}\left(\phi_1\right) \subseteq G_{\tau \mathcal{K}}(\Phi)$.
    \end{enumerate}
\end{lemma}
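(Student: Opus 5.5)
I will verify the seven items in order; each is a short computation from the definition $\tau(\nu)=\int_0^1 (\phi_t)_*\nu\,dt$, so that $\int g\,d\tau\nu=\int_0^1\int g\circ\phi_t\,d\nu\,dt$ for $g\in C(K)$.

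For (1), affinity is immediate from linearity of $\nu\mapsto\int g\circ\phi_t\,d\nu$. For continuity, $(t,x)\mapsto g(\phi_t x)$ is uniformly continuous on $[0,1]\times K$. Hence $\nu_n\to\nu$ weak* gives $\int g\circ\phi_t\,d\nu_n\to\int g\circ\phi_t\,d\nu$ for each $t$, and dominated convergence in $t$ finishes the argument.

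For (2), suppose $\nu$ is $\phi_1$-invariant. Then $(\phi_s)_*\tau\nu=\int_s^{1+s}(\phi_t)_*\nu\,dt$, and the periodicity $(\phi_{t+1})_*\nu=(\phi_t)_*\nu$ shows this equals $\tau\nu$. Surjectivity holds because a $\Phi$-invariant $\mu$ satisfies $\tau\mu=\mu$.

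For (3), if $\mu$ is $\Phi$-ergodic, then $\tau\mu=\mu$ exhibits it as the image of an ergodic measure. The one point to check is that $\mu$ is $\phi_1$-ergodic, which can fail. In that case I will use the standard fact that the $\phi_1$-ergodic components $\nu$ of $\mu$ satisfy $\mu=\int_0^1(\phi_t)_*\nu\,dt$; this comes from pushing forward along the circle action permuting components. Conversely, if $\nu$ is $\phi_1$-ergodic, let $A$ be $\Phi$-invariant. Then $A$ is $\phi_t$-invariant for all $t$ and $\phi_1$-invariant, so $\nu(A)\in\{0,1\}$. Since $\tau\nu(A)=\int_0^1\nu(\phi_{-t}A)\,dt=\nu(A)$, the measure $\tau\nu$ is $\Phi$-ergodic. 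The decomposition fact in the surjectivity direction is the main obstacle. I will prove it by taking a $\phi_1$-ergodic component $\nu$ and observing that $\tau\nu$ is then a $\Phi$-ergodic measure. The average of the $\tau\nu$ over components equals $\tau\mu=\mu$, so extremality of $\mu$ in $\M(\Phi,K)$ forces $\tau\nu=\mu$ for almost every component.

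For (4), $\supp((\phi_t)_*\nu)=\phi_t\supp\nu$. A point $y=\phi_t x$ with $x\in\supp\nu$ has every neighbourhood charged by $(\phi_s)_*\nu$ for $s$ near $t$, by continuity, so $y\in\supp\tau\nu$. Conversely, the set $\phi_{[0,1]}\supp\nu$ is compact and has full $\tau\nu$-measure. Taking $t=0$ gives the inclusion $\supp(\mu)\subseteq\supp(\tau\mu)$.

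For (5), each $(\phi_t)_*\nu$ is $\phi_1$-invariant, and $\phi_t$ conjugates $(\phi_1,\nu)$ to $(\phi_1,(\phi_t)_*\nu)$, so $h_{(\phi_t)_*\nu}(\phi_1)=h_\nu(\phi_1)$. Entropy is affine and, through its ergodic-decomposition representation, respects integral averages, so integrating over $t$ gives $h_{\tau\nu}=h_\nu$. Alternatively, I can use that entropy is harmonic (affine under barycenters) on $\M(\phi_1,K)$.

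For (6), by \eqref{eq:phi_to_Phi2} and \eqref{eq:phi_to_Phi}, $\cE_t(x,\Phi)$ is within $1/t$ of $\tau\cE_{\lfloor t\rfloor}(x,\phi_1)$. If $\cE_{n_k}(x,\phi_1)\to\nu$, continuity of $\tau$ puts $\tau\nu$ in $V_\Phi(x)$. If $\cE_{t_k}(x,\Phi)\to\mu$, compactness gives a subsequence with $\cE_{\lfloor t_k\rfloor}(x,\phi_1)\to\nu$, and then $\tau\nu=\mu$.

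For (7), take $x\in G_{\mathcal K}(\phi_1)$ and $y=\phi_s x$ with $s\in[0,1]$. Then $V_\Phi(y)=V_\Phi(x)$, because the empirical measures of $x$ and $y$ differ by $O(s/t)$ in $D$, by Lemma \ref{lem:D}. Applying (6) gives $V_\Phi(x)=\tau V_{\phi_1}(x)=\tau\mathcal K$.
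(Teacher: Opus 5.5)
Your proof is correct. Items (1), (3), (4), (6) and (7) follow essentially the paper's argument. In (3) you take $\mu$ itself as the $\phi_1$-invariant preimage, since $\tau\mu=\mu$, and then run the same extremality argument over its $\phi_1$-ergodic decomposition.

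The genuine difference is in (2). The paper obtains surjectivity in two steps. It takes a Birkhoff generic point $x$ of a $\Phi$-ergodic $\mu$ and shows $\tau\nu=\mu$ for any $\nu\in V_{\phi_1}(x)$. It then extends to all of $\M(\Phi,K)$ by ergodic decomposition. For the inclusion $\tau\M(\phi_1,K)\subseteq\M(\Phi,K)$, it verifies the $0$--$1$ law $\tau\nu(A)=\nu(A)$ on $\Phi$-invariant sets and again invokes ergodic decomposition.

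Your route is more elementary. Surjectivity is just $\tau\mu=\mu$ for $\Phi$-invariant $\mu$. The $1$-periodicity of $t\mapsto(\phi_t)_*\nu$ gives $(\phi_s)_*\tau\nu=\tau\nu$ directly. This also supplies the $\Phi$-invariance of $\tau\nu$, which the paper's $0$--$1$ computation tacitly presupposes when it concludes $\tau\nu\in\Merg(\Phi,K)$.

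In (5), you use that $\phi_t$ is an isomorphism between $(\phi_1,\nu)$ and $(\phi_1,(\phi_t)_*\nu)$. This replaces the paper's explicit partition computation, which amounts to proving that isomorphism invariance by hand. You also correctly note that integrating over $t$ needs the barycentric form of the affinity of entropy, via its ergodic-decomposition formula. The paper attributes this step simply to the affinity of $\tau$.
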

\begin{proof}
    The affinity of $\tau$ is immediate from the linearity of the integral. To prove $\tau$ is continuous, it suffices to prove that for any sequence $\left\{\mu_n:n\in\Nb^+\right\} \subseteq \M(K)$ with $\mu = \lim_{n \to \infty} \mu_n$ (in weak* topology), one has $\tau(\mu) = \lim_{n \to \infty} \tau(\mu_n)$. Fix $\left\{\mu_n:n\in\Nb^+\right\} \subseteq \M(K)$ with $\mu = \lim_{n \to \infty} \mu_n$. For any $g \in C(K)$, define
    \[G_n(t) = \int_x g\left(\phi_t(x)\right) \dif \mu_n(x)\]
    and
    \[G(t) = \int_x g\left(\phi_t(x)\right) \dif \mu(x).\]
    For any $t \in [0,1]$, the map $x \to g\left(\phi_t(x)\right)$ is continuous on $K$, which implies that $\left\{G_n(t):n \in \Nb^+\right\}$ converges pointwise to $G(t)$ for $t \in [0,1]$. Since $K$ is compact, there exists $M > 0$ such that $\left\lvert G_n(t)\right\rvert < M$ for any $t \in [0,1]$. By the dominated convergence Theorem,
    \[\lim_{n \to \infty} \int_0^1 G_n(t) \dif t = \int_{0}^{1} G(t) \dif t,\]
    which means
    \[\lim_{n \to \infty} \int_K g \dif\left(\tau\mu_n\right) = \int_K g \dif (\tau\mu).\]
    Since $g$ is arbitrary, $\tau$ is continuous. Then item (\ref{item:4.1.1}) is proved.

    Next, we prove $\Merg(\Phi, K) \subseteq \tau\left(\M\left(\phi_1,K\right) \right)$. For any $\mu \in \Merg(\Phi, K)$, we have $\mu\left(G_\mu(\Phi)\right) = 1$ by the Birkhoff Ergodic Theorem. Take $x \in G_\mu(\Phi)$, then
    \begin{equation}\label{eq:4.1}
        \lim_{t \to +\infty} \cE_{t}(x,\Phi) = \mu.
    \end{equation}
    Take $\nu \in V_{\phi_1}(x)$, then there exists a subsequence $\left\{\cE_{n_k}(x,\phi_{1}) :k \in \Nb^+\right\}$ where $\left\{n_k:k \in \Nb^+\right\} \subseteq \Nb^+$, such that
    \[\lim_{k \to \infty} \cE_{n_k}(x,\phi_{1}) = \nu.\]
    Since $\tau$ is continuous, by (\ref{eq:phi_to_Phi}) and (\ref{eq:4.1}) we have
     $$   \tau\nu = \lim_{k \to \infty} \tau\left(\cE_{n_k}(x,\phi_{1})\right)  = \lim_{k \to \infty} \cE_{n_k}(x,\Phi)= \mu. $$
    Since $\mu$ is arbitrary, we have $\Merg(\Phi, K) \subseteq \tau\left(\M\left(\phi_1,K\right) \right)$. Since $\tau$ is affine, by the ergodic decomposition theorem, we have $\M(\Phi, K) \subseteq \tau\left(\M\left(\phi_1,K\right) \right)$.

    For any $\mu \in \Merg\left(\phi_1, K\right)$ and any measurable $A \subseteq K$ with $\phi_{-t} A = A$ for any $t \in \Rb$, we have
    \[\left(\tau\mu\right)(A) = \int_{0}^{1}\mu\left(\phi_{-t}A\right)\dif t = \mu(A),\]
    which is $0$ or $1$. Therefore, $\tau\Merg\left(\phi_1, K\right) \subseteq \Merg\left(\Phi, K\right)$. By the Ergodic Decomposition theorem and item (\ref{item:4.1.1}), we have $\tau\left(\M\left(\phi_1,K\right) \right) \subseteq \M(\Phi, K)$. Then item (\ref{item:4.1.3}) is proved.

    Next, we prove item (\ref{item:4.1.4}). It remains to prove  $\tau^{-1}(\nu) \cap \Merg\left(\phi_1, K\right) \ne \emptyset$ for any $\nu \in \Merg\left(\Phi, K\right)$. Fix $\nu \in \Merg\left(\Phi,K\right)$. By item \eqref{item:4.1.3}, there exists $\mu \in \M\left(\phi_1, K\right)$ such that $\tau\mu = \nu$. Suppose $\mu$ has an ergodic decomposition $\mu = \int \mu_\xi \dif \kappa(\xi)$. By $\tau\Merg\left(\phi_1, K\right) \subseteq \Merg\left(\Phi, K\right)$, we have $\tau\mu_\xi$ is $\Phi$-ergodic for any $\xi$. Since $\nu$ is an extreme point, we have $\tau \mu_\xi = \nu$ for $\kappa$-a.e. $\xi$. Take a $\mu_\xi$ and item \eqref{item:4.1.4} is proved.

    Next, we prove $\phi_{[0,1]}\supp(\mu) = \supp(\tau\mu)$ for any $\mu \in \M(K)$. Fix $\mu \in \M(K)$. First, we prove $\phi_{[0,1]}\supp(\mu) \subseteq \supp(\tau\mu)$. Fix $y \in \phi_{[0,1]}\supp(\mu)$.  By definition, there exist $x \in \supp(\mu)$ and $t_0 \in [0,1]$ such that $y = \phi_{t_0}(x)$. For any open neighborhood $U$ of $y$, it suffices to show that $(\tau\mu)(U) > 0$. Note that $(\tau\mu)(U) = \int_0^1 \mu\left(\phi_{-t}(U)\right)  \dif t$. Since the flow is continuous, the map $(t, z) \mapsto \phi_t(z)$ is continuous on $\Rb \times K$. Since $\phi_{t_0}(x) \in U$ and $U$ is open, there exists an open interval $I$ of $t_0$ and an open neighborhood $V$ of $x$ such that $\phi_t(z) \in U$ for any $t \in I$ and any $z \in V$, which implies that $V \subseteq \phi_{-t}(U)$ for any $t \in I$. Since $x \in \supp(\mu)$ and $V$ is an open neighborhood of $x$, we have $\mu(V) > 0$. Consequently, for any $t \in I \cap [0,1]$, we have $\mu(\phi_{-t}(U)) \ge \mu(V) > 0$. Since the integrand $\mu(\phi_{-t}(U))$ is non-negative everywhere on $[0,1]$ and strictly bounded below by $\mu(V) > 0$ on the interval $I \cap [0,1]$ (which has positive Lebesgue measure), the integral is strictly positive. Thus, $(\tau\mu)(U) > 0$.

    Second, we prove $\supp(\tau\mu) \subseteq \phi_{[0,1]}\supp(\mu)$. Fix $y \in \supp(\tau\mu)$. Assume $y \notin \phi_{[0,1]}\supp(\mu)$. Since $\supp(\mu)$ is closed and $[0,1]$ is compact, the image $\phi_{[0,1]}\supp(\mu)$, is also a compact subset of $K$ by the continuity of $\Phi$. Since $\phi_{[0,1]}\supp(\mu)$ is closed and $y \notin \phi_{[0,1]}\supp(\mu)$, there exists an open neighborhood $U$ of $y$ such that $U \cap \phi_{[0,1]}\supp(\mu) = \emptyset$, which implies $\phi_{-t}(U) \cap \supp(\mu) = \emptyset$ for any $t \in [0,1]$. Then $\mu\left(\phi_{-t}(U)\right) = 0$ for any $t \in [0,1]$. Thus, $(\tau\mu)(U) = \int_0^1 \mu(\phi_{-t}(U)) \, dt = \int_0^1 0 \, dt = 0$, which is a contradiction to $y \in \supp(\tau\mu)$. Therefore, $\supp(\tau\mu) \subseteq \phi_{[0,1]}\supp(\mu)$. Then item (\ref{item:4.1.5}) is proved.

    Next, we prove item (\ref{item:4.1.6}). For any $\mu \in \M\left(\phi_1, K\right)$, since $\tau$ is affine, we have
    \[h_{\tau\mu}\left(\phi_1\right) = \int_0^1 h_{\mu \circ \phi_{-t}}\left(\phi_1\right)  \dif t.\]
    It suffices to prove $h_{\mu \circ \phi_{-t}}\left(\phi_1\right) = h_{\mu}\left(\phi_1\right)$ for any $t \in (0,1)$. For any $t \in (0,1)$ and any finite measurable partition $\xi$ of $K$, we have
    \[H_{\mu \circ \phi_{-t}}(\xi) = \sum_{A \in \xi}-\mu\left(\phi_{-t}A\right)\log \mu\left(\phi_{-t}A\right) = H_{\mu}\left(\phi_{-t}\xi\right)\]
    where $\phi_{-t} \xi :=\left\{\phi_{-t}A:A\in\xi\right\}$ is a finite measurable partition of $K$. Then
    \begin{align*}
        h_{\mu \circ \phi_{-t}}\left(\phi_1,\xi\right) & = \lim_{n \to \infty} \frac{1}{n} H_{\mu \circ \phi_{-t}}\left(\bigvee_{i=0}^{n-1}\phi_{-i}\xi\right)         \\
       & = \lim_{n \to \infty} \frac{1}{n} H_{\mu} \left(\phi_{-t}\bigvee_{i=0}^{n-1}\phi_{-i}\xi\right)               \\
       & = \lim_{n \to \infty} \frac{1}{n} H_{\mu} \left(\bigvee_{i=0}^{n-1}\phi_{-i}\left(\phi_{-t}\xi\right) \right) \\
       & = h_{\mu}\left(\phi_1, \phi_{-t}\xi\right).
    \end{align*}
    On the one hand,
    \begin{equation}\label{eq:4.2}
        h_{\mu \circ \phi_{-t}}\left(\phi_1\right) = \sup_{\xi}h_{\mu \circ \phi_{-t}}\left(\phi_1,\xi\right) = \sup_{\xi} h_{\mu}\left(\phi_1, \phi_{-t}\xi\right) \leq \sup_{\xi} h_{\mu}\left(\phi_1, \xi\right) = h_{\mu}\left(\phi_1\right).
    \end{equation}
    On the other hand, since $\mu$ is $\phi_1$-invariant, we have $\mu = \left(\mu \circ \phi_{-t} \right) \circ \phi_{-(1-t)}$. Similar to (\ref{eq:4.2}), we have
    \[h_{\mu}\left(\phi_1\right) = h_{\left(\mu \circ \phi_{-t} \right) \circ \phi_{-(1-t)}}\left(\phi_1\right) \leq h_{\mu \circ \phi_{-t}}\left(\phi_1\right),\]
    which implies $h_{\mu \circ \phi_{-t}}\left(\phi_1\right) = h_{\mu}\left(\phi_1\right)$. Then item (\ref{item:4.1.6}) is proved.

    Next, we prove item (\ref{item:4.1.7}). $\tau(V_{\phi_1}(x))\subseteq V_\Phi(x)$ follows from the same calculation used in the proof of item (\ref{item:4.1.3}). 
    To prove the reverse inclusion, let $\nu \in V_\Phi(x)$. Then there exists an increasing sequence of positive real numbers $\left\{t_k:k \in \Nb^+\right\}$ tending to infinity such that
    \[\lim_{k \to \infty} \cE_{t_k}(x,\Phi) = \nu.\]
    By (\ref{eq:phi_to_Phi2}), we obtain
    \[\lim_{k \to \infty} \tau\cE_{\left\lfloor t_k\right\rfloor}\left(x, \phi_{1} \right) = \nu.\]
    Take a limit point $\mu'$ of $\left\{\cE_{\left\lfloor t_k\right\rfloor}\left(x,\phi_{1}\right): k \in \Nb^+\right\}$ and hence $\mu ' \in V_{\phi_1}(x)$. Since $\tau$ is continuous, $\tau\mu' = \nu$, which implies that $V_{\Phi}(x) \subseteq \tau\left(V_{\phi_1}(x)\right)$. Then item (\ref{item:4.1.7}) is proved.

    Next, we prove item (\ref{item:4.1.8}). Fix a nonempty compact connected $\mathcal{K} \subseteq \M(\phi_1,K)$. For any $x \in G_{\mathcal{K}}\left(\phi_1\right)$, we have $V_{\phi_1}(x) = \mathcal{K}$. By item (\ref{item:4.1.7}), $V_\Phi(x) = \tau \mathcal{K}$ and hence $x \in G_{\tau \mathcal{K}}(\Phi)$. Since $G_{\tau \mathcal{K}}(\Phi)$ is $\Phi$-invariant, we have $\phi_{[0,1]}G_{\mathcal{K}}\left(\phi_1\right) \subseteq G_{\tau \mathcal{K}}(\Phi)$. Then item (\ref{item:4.1.8}) is proved.


\end{proof}
\begin{remark*}
    \begin{enumerate}
        \item In item (\ref{item:4.1.3}), $\tau$ is not necessarily injective on $\M\left(\phi_1, K\right)$. In item (\ref{item:4.1.4}), measures in $\tau^{-1}\Merg(\Phi,K)$ are not necessarily $\phi_1$-ergodic. A closed orbit $\gamma$ with period $1$ gives a counterexample. Indeed, on such an orbit, $\phi_1$ is the identity on $\gamma$.  Hence, every probability measure supported on $\gamma$ is $\phi_1$-invariant, while the averaging map $\tau$ sends all of them to $m_\gamma=\int_0^1\delta_{\phi_t(p)}\,dt$,
         the normalized time measure on $\gamma$, which is $\Phi$-ergodic.
        \item This lemma can be generalized to the case of $\tau_t(\mu) = \frac{1}{t}\int_0^t \mu \circ \phi_{-s} \dif s$ for any $t \in \Rb^+$ with some adjustments of the parameters.
        \item The inclusion in item (\ref{item:4.1.8}) can be strict. Indeed, consider the suspension flow over the full shift \((\Sigma,\sigma)\) with constant roof function \(1\). Let \(\mu\in\mathcal M_{\mathrm{erg}}(\sigma,\Sigma)\) and \(K=\{\mu\otimes\mathrm{Leb}\}\). Then
        $$ G_K(\phi_1)=\varnothing, \qquad G_K(\Phi)=G_\mu(\sigma)\times[0,1)\neq\varnothing. $$
        Thus,
        $$ \phi_{[0,1]}G_K(\phi_1)=\varnothing \subsetneq G_{\tau K}(\Phi)=G_K(\Phi). $$
        The details are given in Appendix~\ref{app:suspension-example}.
    \end{enumerate}
\end{remark*}

\subsubsection{Proof of the reduction lemma}
We now prove Lemma~\ref{lem:saturated-contained-qr}.  By Lemma~\ref{lem:tau}, the accumulation measures of the flow are obtained by applying $\tau$ to the accumulation measures of the time-one map, and the map $\tau$ preserves the entropy of $\phi_1$-invariant measures. Thus a point in $G_{\mathcal K}(\Phi)$ must have a discrete-time accumulation measure whose entropy is arbitrarily close to the infimum of $h_\mu(\phi_1)$ over $\mathcal K$.
\begin{proof}[Proof of Lemma~\ref{lem:saturated-contained-qr}]
Fix $x \in G_{\mathcal{K}}(\Phi)$ and $\ve > 0$. By definition, $V_\Phi(x)=\mathcal{K}$. By Lemma \ref{lem:tau}\eqref{item:4.1.7}, $\tau\left(V_{\phi_1}(x)\right) = \mathcal{K}$. By Lemma \ref{lem:tau}\eqref{item:4.1.6},
    \[\inf\left\{h_\mu\left(\phi_1\right):\mu \in V_{\phi_1}(x) \right\} = \inf\left\{h_\mu\left(\phi_1\right):\mu \in \mathcal{K} \right\}.\]
    It suffices to prove there exists $\mu \in V_{\phi_1}(x)$ such that $h_{\mu}\left(\phi_1\right) \leq \inf\left\{h_\mu\left(\phi_1\right):\mu \in V_{\phi_1}(x) \right\} + \ve$. Such a measure exists by the definition of the infimum.
\end{proof}

\section{Lower bounds of topological entropy for saturated sets}
The purpose of this section is to give the lower bound for saturated sets in a form that can be applied uniformly to the horseshoe approximations constructed later. The argument has two independent ingredients. The first is an entropy distribution principle, formulated through measure-theoretical lower entropy, which allows us to estimate the entropy of a compact set from a measure supported on it. The second is the uniform separation property, which supplies many orbit segments whose empirical measures stay close to prescribed ergodic measures. These two ingredients are then combined with specification on the approximating sets $\Lambda_n$ to construct points whose flow empirical measures have exactly the prescribed limit set.

In this section, suppose that $\Phi = \left(\phi_t\right)_{t \in \Rb}$ is a continuous flow on a compact metric space $K$, and $\Lambda \subseteq K$ is a compact $\Phi$-invariant subset.
\begin{theorem}\label{thm:5.1}
Assume the following.
\begin{enumerate}[label=\textup{(H\arabic*)}]
    \item The restriction $\Phi|_\Lambda$ is topologically transitive, asymptotically entropy-expansive, and satisfies
    \[
        \htop(\Phi|_\Lambda)<\infty.
    \]

    \item There exists a non-decreasing sequence of compact $\Phi$-invariant subsets $\{\Lambda_n\subseteq\Lambda:n\in\Nb^+\}$
    such that $\overline{\bigcup_{n\in\Nb^+}\Lambda_n}=\Lambda$, and $(\Lambda_n,\Phi)$ satisfies the specification property for every $n\in\Nb^+$.
\end{enumerate}

Let $\mathcal K$ be a nonempty compact connected subset of
$\M(\Phi,\Lambda)$. Assume that there exists a sequence
$\{\mu_i\}_{i\ge1}\subseteq\Merg(\Phi,\Lambda)$ satisfying

\begin{enumerate}[label=\textup{(M\arabic*)}]
    \item
        $\bigcap_{j\in\Nb^+}
        \overline{\{\mu_i:i\ge j\}}
        =\mathcal K$;
    \item
    $D(\mu_i,\mu_{i+1})\longrightarrow0
        \quad\text{as }i\longrightarrow\infty$;
    \item $\mu_i$ is supported on $\Lambda_i$ for every $i\in\Nb^+$.
\end{enumerate}

Then, for every open set $U\subseteq K$ with
$U\cap\Lambda\neq\varnothing$, one has
\[
    \htop\left(
        \Phi,
        G_{\mathcal K}(\Phi)\cap U\cap\Trans(\Phi,\Lambda)
    \right)
    \geq
    \liminf_{i\to\infty}h_{\mu_i}(\phi_1).
\]
\end{theorem}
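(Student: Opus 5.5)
We follow the Pfister--Sullivan scheme, adapted to flows and to a nested family of specification subsystems. Fix $U$ and $0<\eta<\liminf_i h_{\mu_i}(\phi_1)$; we may assume this $\liminf$ is positive, since otherwise it suffices to produce one point and the construction below does so. The plan is to build a compact set $F\subseteq G_{\mathcal K}(\Phi)\cap U\cap\Trans(\Phi,\Lambda)$ carrying a probability measure $m$. We then show that $\liminf_n -\frac1n\log m(B_n(x,\ve,\phi_1))\ge \eta-\gamma(\ve)$ for $m$-a.e.\ $x\in F$, where $\gamma(\ve)\to0$. The entropy distribution principle, in the form of the variational principle for measure-theoretical lower entropy of \cite{feng2012variational}, then gives $\htop(\phi_1,F)\ge\eta-\gamma(\ve)$. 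Lemma \ref{lem:htop}(i) converts this to the flow. Asymptotic entropy-expansiveness in (H1) is used only to guarantee that a single scale $\ve$ sees almost all of $h_{\mu_i}(\phi_1)$; this is the role of the uniform separation property (Lemma \ref{lem:AEE-to-USP}).

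\textbf{Step 1 (separated blocks).} For each $i$, use ergodicity of $\mu_i$, Katok-type counting and the uniform separation property. This gives a fixed $\ve>0$ (independent of $i$, thanks to asymptotic entropy-expansiveness and $\htop<\infty$) and, for every $\delta_i\to0$ and all large $T$, a $(T,4\ve)$-separated set $S_i(T)\subseteq\Lambda_i$ of cardinality at least $e^{T(h_{\mu_i}(\phi_1)-\gamma)}$. Every $x\in S_i(T)$ satisfies $D(\mathcal E_T(x,\Phi),\mu_i)<\delta_i$. Since the measures are only $\Phi$-ergodic while separation is measured for $\phi_1$, the bridge is Lemma \ref{lem:tau}: pick a $\phi_1$-ergodic $\nu_i$ with $\tau\nu_i=\mu_i$ and $h_{\nu_i}=h_{\mu_i}$, and use \eqref{eq:phi_to_Phi}, \eqref{eq:phi_to_Phi2}.

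\textbf{Step 2 (concatenation).} Fix a transitive point $z\in\Lambda\cap U$ and let $\ve_k=\ve/2^{k+2}$. For stage $k$ we work in $\Lambda_{k}$. Specification with gap $T_{\ve_k}$ for $(\Lambda_k,\Phi)$ lets us glue, in order:
\begin{enumerate}
\item an initial segment forcing the point into $U$ (this needs $U\cap\Lambda_1\ne\emptyset$, which we arrange by reindexing and using $\overline{\bigcup\Lambda_n}=\Lambda$);
\item $N_k$ blocks from $S_k(T_k)$;
\item a segment of length $k$ of the orbit of $z$, approximated within $\Lambda_k$ by density of $\bigcup\Lambda_n$.
\end{enumerate}
Since the $\Lambda_n$ are nested, all earlier pieces live in $\Lambda_k$. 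By a nested-compact argument (shadowing the stage-$k$ point at scale $\ve_k$ by a stage-$(k+1)$ point, with $\sum\ve_k<\ve/2$), we get a limit point in the closed set $\Lambda$. The set $F$ is the closure of the limits over all choices of blocks. The $z$-segments make every point of $F$ transitive in $\Lambda$, and the initial segment puts $F$ inside $U$.

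We choose $N_k$ growing so fast that the total time up to stage $k$ dominates the time up to stage $k-1$, while the $z$-segments and gaps have zero density. By Lemma \ref{lem:D}, the empirical measures at the end of stage $k$ are then close to $\mu_k$. Because $D(\mu_k,\mu_{k+1})\to0$ (M2) and blocks have bounded length relative to the accumulated time, the empirical measures at intermediate times move slowly, staying within $o(1)$ of the segment $[\mu_k,\mu_{k+1}]$ of convex combinations. Combining this with (M1) and the connectedness of $\mathcal K$, we get $V_\Phi(x)=\mathcal K$ exactly. Each $x$ visits neighborhoods of every $\mu_i$, giving $\supseteq$. Every limit lies in $\bigcap_j\overline{\operatorname{conv}\text{-paths}}$, and these paths collapse to $\mathcal K$ since consecutive distances vanish, giving $\subseteq$.

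\textbf{Step 3 (entropy estimate).} Define $m$ as the weak* limit of equidistributed atoms on the stage-$k$ points. Distinct choices of blocks are $2\ve$-separated, because of the $4\ve$-separation and the $\ve/2$ total shadowing error. Hence any Bowen ball $B_n(x,\ve,\phi_1)$ meets at most the atoms sharing the first blocks up to time $n$. Its $m$-mass is therefore at most $\prod_{\text{completed blocks}}(\#S)^{-1}$, times a subexponential correction for the incomplete block and the gaps. This yields $\liminf_n -\frac1n\log m(B_n)\ge\liminf_i h_{\mu_i}-\gamma$.

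The main obstacle is Step 2's exact control $V_\Phi(x)=\mathcal K$ together with the entropy count. We must verify that all intermediate times, not just stage endpoints, have empirical measures near $\mathcal K$, despite the flow time-reparametrization in specification. In Step 3 the difficulty is that the incomplete block, the gap times and the transitive insertions must contribute negligibly, so the growth rates of $N_k$ and $T_k$ must be chosen carefully; I would fix $T_k$ first from Step 1 and then take $N_k\ge 2^k(T_1+\dots+T_{k+1}+k+T_{\ve_{k+1}})$.
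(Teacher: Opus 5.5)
Your proposal is correct and follows essentially the same route as the paper. The shared ingredients are: $\phi_1$-ergodic lifts $\nu_i$ with $\tau\nu_i=\mu_i$ feeding the uniform separation property inside $\supp\nu_i\subseteq\Lambda_i$; nested specification gluing in $\Lambda_k$, with an initial segment in $U$ and inserted transitivity segments; the equidistributed limit measure on the resulting tree; and the Feng--Huang lower-entropy principle. The differences are cosmetic: you draw transitivity from one transitive point $z$ of $\Lambda$ rather than from transitive points $x_k$ of $\Lambda_k$, and you track flow empirical measures of the blocks directly instead of applying $\tau$ at the end.

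Two details should be made explicit. First, a point of $\Lambda_k$ whose orbit approximates a length-$k$ piece of the orbit of $z$ need not exist for fixed $k$. You need the same re-indexing of the $\Lambda_n$ that the paper uses when it assumes $\Lambda\subseteq B(\Lambda_n,\ve_n)$, which is harmless under (M3), or else a more slowly growing segment length. Second, the gaps and block lengths should be integers, as the paper takes $T_i\in\Nb^+$, so that the $\phi_1$-separation of the blocks survives concatenation.
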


\subsection{Technical Preparations}
We first present the two tools used in the proof of Theorem \ref{thm:5.1}. The first one converts lower estimates on Bowen balls for an auxiliary measure into a topological entropy lower bound. The second one provides the separated orbit segments from which the saturated set will be assembled.

\subsubsection{The measure-theoretical lower entropy}
Following ideas of Brin and Katok \cite{brin1983local}, Feng and Huang \cite{feng2012variational} introduced the \emph{measure-theoretical lower entropy} in discrete-time dynamical systems. Let $f:K \to K$ be a continuous map.
\begin{definition}
    Let $\mu \in \M(K)$. Define
    \[\lh_{\mu}(f) = \int \lh_{\mu}(f,x)\dif\mu\]
    where
    \[\lh_\mu(f,x) = \lim_{\ve \to 0} \liminf_{n \to \infty} - \frac{1}{n}\log \mu\left(B_n(x,\ve,f)\right).\]
\end{definition}
Feng and Huang \cite{feng2012variational}*{Theorem 1.2} proved the following variational principle of measure-theoretical lower entropy. If $Z \subseteq K$ is nonempty and compact, then
\[\htop\left(f,Z\right) = \sup\left\{\lh_\mu(f):\mu\in\M(K),\mu(Z)=1\right\}.\]
By Lemma \ref{lem:htop}, we have the following lemma.
\begin{lemma}\label{lem:lower_entropy}
    Let $\Phi = \left(\phi_t\right)_{t \in \Rb}$ be a continuous flow on a compact metric space $K$. Then for any nonempty and compact set $Z \subseteq K$, we have
    \[\htop\left(\Phi,Z\right) = \sup\left\{\lh_\mu\left(\phi_1\right) :\mu\in\M(K),\mu(Z)=1\right\}.\]
\end{lemma}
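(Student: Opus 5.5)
The statement to prove is Lemma \ref{lem:lower_entropy}, the flow version of the Feng--Huang variational principle. The plan is to reduce it to the discrete-time statement for the time-one map $f=\phi_1$. Since $\phi_1:K\to K$ is a continuous map (indeed a homeomorphism) of the compact metric space $K$, the theorem of Feng and Huang \cite{feng2012variational}*{Theorem 1.2} applies directly to $f=\phi_1$. For every nonempty compact $Z\subseteq K$ it gives
\[
\htop(\phi_1,Z)=\sup\left\{\lh_\mu(\phi_1):\mu\in\M(K),\ \mu(Z)=1\right\}.
\]
The right-hand side is exactly the right-hand side of the statement: $\lh_\mu(\phi_1)$ is defined purely in terms of the discrete Bowen balls $B_n(x,\ve,\phi_1)$, and the supremum runs over all Borel probability measures giving full mass to $Z$. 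Neither depends on any further flow structure.

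It then only remains to identify the left-hand sides. By Lemma \ref{lem:htop}(i), quoted from \cite{pacifico2022bowen}*{Theorem A}, we have $\htop(\Phi,Y)=\htop(\phi_1,Y)$ for every subset $Y\subseteq K$, and in particular for $Y=Z$. Substituting this into the Feng--Huang identity gives the claimed equality.

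There is no genuine obstacle here. The only point needing care is that the two notions of Bowen topological entropy agree on arbitrary noncompact subsets. These are the Carathéodory construction with open covers for flows and Bowen's definition with Bowen balls for $\phi_1$ (Definition \ref{def:htop}). This agreement is exactly the content of Lemma \ref{lem:htop}(i), which we are allowed to assume. One should also check that Feng--Huang's $\htop$ uses the same Bowen-ball definition as Definition \ref{def:htop}, which it does.
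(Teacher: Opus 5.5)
Your proposal is correct and matches the paper's argument: apply Feng--Huang \cite{feng2012variational}*{Theorem 1.2} to the time-one map $\phi_1$, then replace $\htop(\phi_1,Z)$ by $\htop(\Phi,Z)$ using Lemma \ref{lem:htop}(i). Your check that the right-hand side involves only $\phi_1$-Bowen balls and that the Bowen-entropy definitions coincide is exactly what the one-line deduction in the paper relies on.
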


\subsubsection{The uniform separation property}
The preceding lemma will be applied to a measure carried by the set constructed through specification. To ensure that this measure has large lower entropy, we need many separated choices at each stage of the construction. This is provided by the uniform separation for the time-one map.

Let $f:K \to K$ be a continuous map, $n \in \Nb^+$ and $\ve > 0$. Two points $x,y \in K$ are called \emph{\((n,\ve)\)-separated} for $f$ if there is $0 \leq i \leq n-1$ such that $d\left(f^i x,f^i y\right) > \ve$. Let $0 < \delta \leq 1$. Two points $x,y \in K$ are called \emph{\((\delta,n,\ve)\)-separated} for $f$ if
\[  \left\lvert \left\{0 \leq i \leq n-1:d\left(f^i x,f^i y\right) > \ve \right\} \right\rvert \geq \delta n \]
where, for a finite set \(A\), \(|A|\) denotes its cardinality. A subset $E \subseteq K$ is called $(n,\ve)$-separated (respectively, $(\delta, n,\ve)$-separated) for $f$ if any two distinct points in $E$ are $(n,\ve)$-separated (respectively, $(\delta, n,\ve)$-separated) for $f$.

For any $F \subseteq \M(K)$, let
\[
    \cE_n^{-1}(F) := \left\{x \in K: \cE_n(x, f) \in F\right\}.
\]
Note that in this article, the empirical measures associated with $\cE_n^{-1}(F)$ are taken with respect to the continuous map $f$ or the time-1 map $\phi_1$ of the flow, and do not involve the empirical measures along the flow.
\begin{definition}
    We say $(K, f)$ has the \emph{uniform separation property} if for any $\eta > 0$, there exist $\delta>0$ and $\ve > 0$ such that for any $\mu \in \Merg(f,K)$ and any neighborhood $F$ of $\mu$ in $\M(K)$, there exists $N \in \Rb^+$ such that for any $n \geq N$, there exists a $\left(\delta, n, \ve\right)$-separated set $\Gamma_n \subset \cE_n^{-1}(F)$ for $f$ such that $\left\lvert \Gamma_n\right\rvert \geq e^{n\left(h_{\mu}\left(f\right) - \eta\right)}$. 
\end{definition}

Since $\delta>0$, any $\left(\delta, n, \ve\right)$-separated set for $f$ is also $\left(n, \ve\right)$-separated. By \cite{pfister2007topological}*{Theorem 3.1}, the uniform separation property is a consequence of asymptotic entropy-expansiveness. Furthermore, by \cite{hou2021entropy}*{Proposition 2.9}, the separated set $\Gamma_n$ can be chosen as a subset of $\supp(\mu)$ if the system is asymptotically entropy-expansive, which leads to the following lemma.

\begin{lemma}\label{lem:AEE-to-USP}
    If $(\Lambda,\Phi)$ is asymptotically entropy-expansive, then $\left(\Lambda,\phi_1\right) $ has the uniform separation property. Moreover, for any $\eta > 0$, there exists $\ve > 0$ such that for any $\mu \in \Merg(\phi_1,\Lambda)$ and any neighborhood $F$ of $\mu$ in $\M(\Lambda)$, there exists $N \in \Rb^+$ such that for any $n \geq N$, there exists a $\left(n, \ve\right)$-separated set $\Gamma_n \subset \cE_n^{-1}(F) \cap \supp(\mu)$ for $\phi_1$ such that $ \left\lvert \Gamma_n\right\rvert \geq e^{n\left(h_{\mu}\left(\phi_1\right) - \eta\right)}$.
\end{lemma}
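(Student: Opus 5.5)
The statement to prove is Lemma \ref{lem:AEE-to-USP}. I will derive it by combining the two cited results with the definitions for flows, after checking that every hypothesis transfers to the time-one map.

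\textbf{First assertion.} By definition, $(\Lambda,\Phi)$ asymptotically entropy-expansive means that $\phi_1|_\Lambda$ is asymptotically entropy-expansive as a continuous map of the compact metric space $\Lambda$. I then apply \cite{pfister2007topological}*{Theorem 3.1} verbatim to $f=\phi_1|_\Lambda$. That result states that asymptotic entropy-expansiveness implies the uniform separation property for $f$. The measures there range over $\Merg(\phi_1,\Lambda)$ and the empirical measures are the discrete ones $\cE_n(\cdot,\phi_1)$, which is exactly the definition stated above. This gives the first assertion.

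\textbf{Second assertion.} Fix $\eta>0$, and let $\delta,\ve$ be the constants provided by uniform separation. By \cite{hou2021entropy}*{Proposition 2.9}, under asymptotic entropy-expansiveness the separated sets can be taken inside $\supp(\mu)$. So for every ergodic $\mu$ and every neighborhood $F$ of $\mu$ there is an $N$ such that, for all $n\ge N$, there is a $(\delta,n,\ve)$-separated set $\Gamma_n\subset \cE_n^{-1}(F)\cap\supp(\mu)$ with $|\Gamma_n|\ge e^{n(h_\mu(\phi_1)-\eta)}$.

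Since $\delta>0$, any two distinct points of $\Gamma_n$ disagree by more than $\ve$ at no fewer than $\delta n\ge1$ indices $0\le i\le n-1$. Hence $\Gamma_n$ is $(n,\ve)$-separated. Dropping $\delta$ gives the stated form, in which $\ve$ depends only on $\eta$.

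\textbf{Checks needed.} I need to confirm that Proposition 2.9 of \cite{hou2021entropy} is stated in the same setting: a continuous map on a compact metric space, ergodic $\mu$, and arbitrary weak* neighborhoods $F$. I also need the constants $\ve$ (and $\delta$) to be uniform in $\mu$, as the definition requires.

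If the cited proposition only gives $\Gamma_n$ near $\supp(\mu)$ rather than inside it, I would argue directly instead. Apply the Katok-type construction to $\mu$ restricted to the compact invariant set $\supp(\mu)$: the subsystem $(\supp(\mu),\phi_1)$ is still asymptotically entropy-expansive, with the same $\ve$-function bound since $B_\infty$-sets only shrink. Then apply Pfister--Sullivan to this subsystem. Its empirical-measure neighborhoods are the restrictions of $F$, because $\M(\supp\mu)\subseteq\M(\Lambda)$ carries the induced topology.

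\textbf{Main obstacle.} The uniformity of $\ve$ over all ergodic $\mu$ is the delicate point, and it must come from the global tail-entropy bound $\sup_x\htop(\phi_1,B_\infty(x,\ve))$ on $\Lambda$. This quantity dominates the corresponding quantity on every subsystem, so the same $\ve$ works for every $\supp(\mu)$.
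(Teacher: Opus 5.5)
Your proposal is correct and follows the paper's argument: apply \cite{pfister2007topological}*{Theorem 3.1} to $\phi_1|_\Lambda$ to get the uniform separation property, use \cite{hou2021entropy}*{Proposition 2.9} to place $\Gamma_n$ inside $\supp(\mu)$, and drop $\delta$ because a $(\delta,n,\ve)$-separated set with $\delta>0$ is $(n,\ve)$-separated (the number of indices where the two orbits differ by more than $\ve$ is an integer at least $\delta n>0$, so no condition $\delta n\ge 1$ is needed). Your fallback through the subsystem $(\supp(\mu),\phi_1)$ is unnecessary, and as written it would also require checking, in the Pfister--Sullivan proof, that the constants depend only on the tail-entropy function and not on the particular subsystem; the statement of their theorem does not give this.
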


\subsection{Proof of Theorem \ref{thm:5.1}}
We now prove Theorem \ref{thm:5.1}. At the $i$-th stage, we construct a long block of orbit segments typical for $\mu_i$, whose lengths are chosen so that the transition times and the previous stages are negligible. The additional orbit segments of transitive points in the sets $\Lambda_i$ force the final points to be transitive in $\Lambda$, while the sequence of measures $\{\mu_i\}$ determines the limit set of empirical measures.

\begin{proof}[Proof of Theorem \ref{thm:5.1}]
    \textbf{Preliminaries.} Fix a nonempty open set $U \subseteq K$ with $U \cap \Lambda \ne \emptyset$. Since $\overline{\bigcup_{n \in \Nb^+} \Lambda_n} = \Lambda$ and $\{ \Lambda_{n} \subseteq \Lambda:n \in \mathbb{N}^{+} \}$ is non-decreasing, we have $U \cap \Lambda_n \ne \emptyset$ for $n \in \Nb^+$ large enough. Without loss of generality, assume $U \cap \Lambda_1 \ne \emptyset$. Fix $x \in U \cap \Lambda_1$ and $\ve_0 > 0$ such that $\overline{B\left(x,\ve_0\right)} \subseteq U$. We prove the case of $\liminf_{i \to \infty} h_{\mu_i}(\phi_1) > 0$ and give a brief explanation of the other cases at the end of the proof. It suffices to prove 
    \[\htop\left(\Phi, G_{\mathcal{K}}\left(\Phi\right) \cap \Trans(\Phi,\Lambda) \cap U\right) \geq \liminf_{i \to \infty} h_{\mu_i}(\phi_1) - 3\eta\]
    for any $\eta > 0$. Fix $0 < \eta < \liminf_{i \to \infty} h_{\mu_i}(\phi_1) / 3$ and $H = \liminf_{i \to \infty} h_{\mu_i}(\phi_1) - \eta$. Since $\bigcap_{j\in\Nb^+} \overline{\left\{\mu_i\right\}_{i=j}^\infty} = \mathcal{K}$, without loss of generality, assume $h_{\mu_i}(\phi_1) \geq H$ for any $i \in \Nb^+$ (otherwise, we take a subsequence by discarding the first finitely many terms). Let $\delta > 0$ be such that
    \begin{equation}\label{eq:eta}
        3\delta \leq \frac{\eta}{H-2\eta}.
    \end{equation}
    Since $(\Lambda,\Phi)$ is asymptotically entropy-expansive, $\left(\Lambda,\phi_1\right)$ satisfies the uniform separation property by Lemma \ref{lem:AEE-to-USP}. For this $\eta$, there exists $\ve^* > 0$ such that for any $\mu \in \Merg(\phi_1,\Lambda)$ and any neighborhood $F$ of $\mu$ in $\M(\Lambda)$, there exists $N \in \Nb^+$ such that for any $n \geq N$, there exists a $\left(n, \ve^*\right)$-separated set $\Gamma_n \subset \cE_n^{-1}(F) \cap \supp(\mu)$ for $\phi_1$ such that $\left\lvert \Gamma_n\right\rvert \geq e^{n\left(h_{\mu}\left(\phi_1\right)-\eta\right) }$.

    \medskip

    \textbf{Choice of parameters.} Take a sequence of real numbers $\left\{\ve_i\right\}_{i=1}^\infty$ satisfying
    \[0 < \ve_1 < \min\left\{\ve_0/2,\ve^*/6\right\}\]
    and
    \[\ve_{i+1} = \ve_i / 2\]
    for any $i \in \Nb^+$. Since $\overline{\bigcup_{n \in \Nb^+} \Lambda_n} = \Lambda$, assume $\Lambda \subseteq B(\Lambda_n, \ve_n)$ without loss of generality.

    Since $(\Lambda_n,\Phi)$ satisfies the specification property, there is a transitive point $x_n \in \Lambda_n$ of $(\Lambda_n,\Phi)$ and $R_n \in \Rb^+$ such that $\Lambda_n \subseteq B(\{\phi_{t}x_n:0 \leq t \leq R_n\}, \ve_n)$. Thus, we can take a sequence of positive integers $\left\{R_i\right\}_{i=1}^\infty$.

    Let $\tau:\M(\Lambda) \to \M(\Lambda)$ be the averaging map defined by
    \[\tau(\mu) = \int_{0}^{1}\mu \circ \phi_{-t} \dif t\]
    as in Lemma \ref{lem:tau}, which is continuous. Let $\delta_0 = \delta$. Since $\M(\Lambda)$ is compact, $\tau$ is uniformly continuous and hence for any $i \in \Nb^+$, there exists $0 < \delta_i < \min\left\{\ve_i, \delta_{i-1} / 2, \delta\right\}$ such that
    \begin{equation}\label{eq:tau}
        D\left(\tau\mu,\tau\nu\right) \leq \ve_i \text{\ for\ any\ } \mu,\nu \in \M(\Lambda) \text{\ with\ } D\left(\mu,\nu\right) \leq 7\delta_i.
    \end{equation}

    For any $i \in \Nb^+$, there exists $\nu_i \in \Merg\left(\phi_1,\Lambda\right)$ such that $\tau\nu_i = \mu_i$ and $\supp\left(\nu_i\right) \subseteq \supp\left(\mu_i\right) \subseteq \Lambda_i$ by Lemma \ref{lem:tau}.

    For any $i \in \Nb^+$, let $T_i$ be the transition time determined by the specification property of $\left(\Lambda_{i},\Phi\right)$ with scale $\delta_i$. By Definition \ref{def:specification}, we can assume $T_i \in \Nb^+$ for any $i \in \Nb^+$.

    Let $t_0 = 1$. For any $i \in \Nb^+$, by the uniform separation property of $\left(\Lambda,\phi_1\right) $ and Lemma \ref{lem:AEE-to-USP}, there exists a positive integer
    \[t_i > \max\left\{T_i/\delta_i,T_i/\delta, t_{i-1}, R_i, N^{\mathrm{USP}}_i\right\}\]
    and a $\left(t_i,\ve^*\right)$-separated set
    \begin{equation}\label{eq:Gamma_D}
        \Gamma_{t_i} \subseteq \cE_{t_i}^{-1}(B\left(\nu_i,\delta_i\right)) \cap \supp\left(\nu_i\right) \subseteq \cE_{t_i}^{-1}(B\left(\nu_i,\delta_i\right)) \cap \Lambda_i
    \end{equation}
    for $\phi_1$ such that
    \begin{equation}\label{eq:Gamma_cardianlity}
        \left\lvert \Gamma_{t_i}\right\rvert \geq e^{t_i\left(h_{\nu_i}\left(\phi_1\right) - \eta\right)}
    \end{equation}
    where $N^{\mathrm{USP}}_i$ is the positive integer determined by the uniform separation property with respect to $\nu_i$ and $B\left(\nu_i,\delta_i\right)$.

    Next, we define $\{N_i\}_{i\in\Nb^+}$ and $\{L_i\}_{i\in\Nb}$ recursively. Set $L_0:=0$. Once $L_{i-1}$ has been defined, choose $N_i\in\Nb^+$ satisfying that
    \begin{equation}\label{eq:N_D}
        N_i \geq \frac{L_{i-1}+T_i+2T_{i+1}+t_i+2t_{i+1}}{\delta_i t_i - T_i}.
    \end{equation}
    This choice is possible because $\delta_i t_i>T_i$. Let
    \[
        L_i =
        \begin{cases}
            \left(N_1+2\right)\left(t_1+T_1\right), & i=1,\\
            L_{i-1}+\left(N_i+1\right)\left(T_i+t_i\right), & i\geq 2.
        \end{cases}
    \]

    Let $\Gamma^{N_i}_{t_i} = \prod_{j=1}^{N_i} \Gamma_{t_i}$ for any $i \in \Nb^+$.

    \medskip

    \textbf{Construction of $G$.} We construct a compact subset $G$ of $G_{\mathcal{K}}(\Phi)$ inductively.

    \textit{Step(1):} Recall that we have fixed a point $x \in U \cap \Lambda_{1}$ and $x_{1}$ is a transitive point of $(\Lambda_{1},\Phi)$.  For any $\overline{y^1} = \left(y_1^1,\dots,y_{N_1}^1\right) \in \Gamma^{N_1}_{t_1}$ and any $\ve > 0$, let
    \[G_1\left(\overline{y^1},\ve\right) = B_{t_1}(x,\ve) \cap \phi_{-T_1-t_1} B_{t_1}(x_1,\ve) \cap \bigcap_{j=0}^{N_1-1} \phi_{-(j+2)\left(t_1+T_1\right)}B_{t_1}\left(y_{j+1}^1,\ve\right),\]
    denote the points that $\ve$-shadow the orbit segments
    \[\left(x,t_1\right)
        \overset{T_1}{\longrightarrow}\left(x_1,t_1\right)
        \overset{T_1}{\longrightarrow}\left(y^1_1,t_1\right)
        \overset{T_1}{\longrightarrow}\left(y^1_2,t_1\right)
        \overset{T_1}{\longrightarrow}\dots
        \overset{T_1}{\longrightarrow}\left(y^1_{N_1},t_1\right).\]
    By the specification property of $\left(\Lambda_1, \Phi\right)$ and choice of $T_{1}$ and $\delta_{1}$, the set $G_1\left(\overline{y^1},\delta_1\right) \cap \Lambda_1$ is nonempty. Let
    \[G_1 = \bigcup_{\overline{y^1} \in \Gamma_{t_1}^{N_1}} G_1\left(\overline{y^1}, 2\delta_1\right) \cap \Lambda.\]
    and hence $G_1$ is nonempty. For any $\overline{y^1} = \left(y_1^1,\dots,y_{N_1}^1\right) \in \Gamma^{N_1}_{t_1} \subseteq \Lambda_1$, fix $z_1\left(\overline{y^1}\right) \in G_1\left(\overline{y^1},\delta_1\right) \cap \Lambda_1$. 
    Let $F_1 = \left\{z_1\left(\overline{y^1}\right) : \overline{y^1} \in \Gamma_{t_1}^{N_1}\right\} \subseteq \Lambda_1$. Distinct choices of \(\overline y^{\,1}\) yield distinct points \(z_1(\overline y^{\,1})\), by the \((t_1,\varepsilon_*)\)-separation of \(\Gamma_{t_1}\) and the \(\delta_1\)-shadowing property, and hence $\left\lvert F_1\right\rvert = \left\lvert\Gamma_{t_1} \right\rvert^{N_1} $. Note $F_1 \subseteq G_1$. Since $\Gamma_{t_1}$ is $\left(t_1, \ve^*\right)$-separated for $\phi_1$, we have $F_1$ is $\left(L_1,\ve^*-2\delta_1\right)$-separated for $\phi_1$.

    \medskip

    \textit{Step($k$) for any $k \ge 2$:} For any $z_{k-1} = z_{k-1}\left(\overline{y^1},\dots, \overline{y^{k-1}}\right) \in F_{k-1} \subseteq \Lambda_{k-1}$, any $\overline{y^k} = \left(y^k_1,\dots,y^k_{N_k}\right) \in \Gamma_{t_k}^{N_k} \subseteq \Lambda_k$ and any $\ve > 0$, let
    \[G_k\left(z_{k-1},\overline{y^k},\ve\right) = B_{L_{k-1}}\left(z_{k-1},\ve\right) \cap   \phi_{-L_{k-1}-T_k}B_{t_k}\left(x_k,\ve\right) \cap \left(\bigcap_{j=0}^{N_k-1} \phi_{-L_{k-1}-T_{k}-(j+1)\left(t_k+T_k\right)}B_{t_k}\left(y_{j+1}^k,\ve\right)\right)\]
    denote the points that $\ve$-shadow the orbit segments
    \[\left(z_{k-1},L_{k-1}\right)
        \overset{T_k}{\longrightarrow}\left(x_k,t_k\right)
        \overset{T_k}{\longrightarrow}\left(y^k_1,t_k\right)
        \overset{T_k}{\longrightarrow}\left(y^k_2,t_k\right)
        \overset{T_k}{\longrightarrow}\dots
        \overset{T_k}{\longrightarrow}\left(y^k_{N_k},t_k\right).\]
    By the specification property of $\left(\Lambda_k, \Phi\right)$, the set $G_k\left(z_{k-1},\overline{y^k},\delta_k\right) \cap \Lambda_k$ is nonempty. Let
    \[G_k = \bigcup_{z_{k-1} \in F_{k-1}, \overline{y^k} \in \Gamma_{t_k}^{N_k}} G_k\left(z_{k-1},\overline{y^k}, 2\delta_k\right) \cap \Lambda\]
    and hence $G_k$ is nonempty. Note $G_k \subseteq G_{k-1}$. For any $\left( \overline{y^1},\dots,\overline{y^k}\right)  \in \Gamma^{N_1}_{t_1} \times \cdots \times \Gamma^{N_k}_{t_k}$, fix
    \[z_k\left(\overline{y^1},\dots,\overline{y^k}\right) \in G_k\left(z_{k-1}\left(\overline{y^1},\dots, \overline{y^{k-1}} \right) ,\overline{y^k},\delta_k\right) \cap \Lambda_k.\]
    Let
    \[F_k = \left\{z_k\left(\overline{y^1},\dots,\overline{y^k}\right) : \overline{y^i} \in \Gamma_{t_i}^{N_i}, \forall 1 \leq i \leq k\right\} \subseteq \Lambda_k \]
    and hence $\left\lvert F_k\right\rvert = \left\lvert F_{k-1}\right\rvert \left\lvert \Gamma_{t_k}^{N_k}\right\rvert = \prod_{i=1}^{k}\left\lvert \Gamma_{t_i}\right\rvert^{N_i} $. Note $F_k \subseteq G_k$. Since $F_{k-1}$ is $\left(L_{k-1},\ve^*-2\sum_{i=1}^{k-1}\delta_i\right)$-separated for $\phi_1$ and $\Gamma_{t_k}$ is $\left(t_k,\ve^*\right)$-separated for $\phi_1$, we have $F_k$ is $\left(L_k,\ve^*-2\sum_{i=1}^{k}\delta_i\right)$-separated for $\phi_1$.

    Then we have a sequence of nonempty subsets $\left\{G_i:i\in\Nb^+\right\}$. Let $G=\bigcap_{i=1}^\infty \overline{G_i} \subseteq \Lambda$. By the Cantor intersection theorem, $G$ is nonempty and compact. 

    Since \(G_i\) is a finite union of cylinders, every \(z\in G\) belongs to the closure of a cylinder at each level. The separation of the sets \(\Gamma_{t_i}\), together with \(4\delta_i<\varepsilon_*\), makes this cylinder unique. By nesting, these choices determine a compatible infinite itinerary \((\overline {y^{\,i}}(z))_{i\ge1}\) with \(\overline{y^{i}}(z)=\left(y^i_1,\dots,y^i_{N_i}\right) \in\Gamma_{t_i}^{N_i}\), such that the orbit of \(z\) shadows the prescribed segments successively, with accuracy \(2\delta_i\) at stage \(i\). That is,
    $z$  $2\delta_1$-shadows the orbit segments
    \[\left(x,t_1\right)
        \overset{T_1}{\longrightarrow}\left(x_1,t_1\right)
        \overset{T_1}{\longrightarrow}\left(y^1_1,t_1\right)
        \overset{T_1}{\longrightarrow}\left(y^1_2,t_1\right)
        \overset{T_1}{\longrightarrow}\dots
        \overset{T_1}{\longrightarrow}\left(y^1_{N_1},t_1\right),\]
    then $2\delta_2$-shadows the orbit segments
    \[
        \overset{T_2}{\longrightarrow}\left(x_2,t_2\right)
        \overset{T_2}{\longrightarrow}\left(y^2_1,t_2\right)
        \overset{T_2}{\longrightarrow}\left(y^2_2,t_2\right)
        \overset{T_2}{\longrightarrow}\dots
        \overset{T_2}{\longrightarrow}\left(y^2_{N_2},t_2\right),\]
    and so on for each $i \in \Nb^+$.

    \medskip

    \textbf{Properties of $G$.} We prove that $G \subseteq G_{\mathcal{K}}(\Phi) \cap \Trans(\Phi,\Lambda) \cap U$.

    \textit{Inclusion within the Transitive Domain.} First, we prove that $G \subseteq U \cap \Trans(\Phi,\Lambda)$. By $G \subseteq \overline{G_1}$ and the definition of $G_1$, we have $G \subseteq \overline{B(x,2\delta_1)} \subset \overline{B(x,2\ve_1)} \subset \overline{B(x,\ve_0)} \subset U$. For every $z \in G$, the orbit segment shadowing $(x_{n},t_{n})$ is $(2\delta_{n}+2\varepsilon_{n})$-dense in $\Lambda$. These segments occur at times tending to infinity, and their density scales tend to zero. Hence $\omega_{\Phi}(z)=\Lambda$, so $G \subseteq \Trans(\Phi,\Lambda)$.

    \medskip

    \textit{Inclusion within the Saturated Set.} 
     Next, we prove $G \subseteq G_{\mathcal{K}}(\Phi)$. It suffices to prove $V_{\Phi}(z)=\mathcal K$ for every $z\in G$. 
     Fix $z\in G$, and let $(\overline{y^i} )_{i \geq 1}$ be its compatible itinerary constructed above. For each $i\ge1$, write $z_i=z_i(\overline{y^1},\ldots,\overline{y^i})\in F_i$.
    Let $s\in\mathbb N^+$, and choose $k$ such that $L_k\le s<L_{k+1}$.

    If $L_{k} \leq s < L_{k} + 2T_{k+1} + 2t_{k+1}$, by Remark \ref{rmk:D}, \eqref{eq:Gamma_D} and \eqref{eq:N_D},
    \begin{align*}
        D\left(\cE_s(z,\phi_1),\nu_k\right) & \leq D\left(\cE_{s}(z,\phi_1), \frac{1}{N_k}\sum_{j=1}^{N_k}\cE_{t_k}\left(y^k_j,\phi_1\right)\right)+ D\left(\frac{1}{N_k}\sum_{j=1}^{N_k}\cE_{t_k}\left(y^k_j,\phi_1\right), \nu_k\right) \\
                                     & \leq 2\delta_k + 2\delta_k + \delta_k = 5\delta_k.
    \end{align*}
    By \eqref{eq:tau}, we have $D\left(\cE_s(z,\Phi), \mu_k\right)=D\left(\tau\cE_s(z,\phi_{1}), \tau\nu_k\right) \leq \ve_k$.

    If $L_{k} + 2T_{k+1} + 2t_{k+1} \leq s < L_{k+1}$, let
    \[m = \left\lfloor \frac{s-L_{k} - T_{k+1} - t_{k+1}}{t_{k+1}+T_{k+1}}\right\rfloor \geq 1\]
    where $\left\lfloor \cdot \right\rfloor $ denotes the maximal integer not larger than the given number. By Remark \ref{rmk:D}, \eqref{eq:Gamma_D} and \eqref{eq:N_D},
    \begin{align*}
         & \phantom{\leq} D\left(\cE_s(z,\phi_1),\frac{N_kt_k}{N_kt_k+mt_{k+1}}\nu_k + \frac{mt_{k+1}}{N_kt_k+mt_{k+1}}\nu_{k+1}\right)                                                                                                                             \\
         & \leq D\left(\cE_{s}(z,\phi_1), \frac{N_kt_k}{N_kt_k+mt_{k+1}}\left(\frac{1}{N_k}\sum_{j=1}^{N_k}\cE_{t_k}\left(y^k_j,\phi_1\right)\right)  + \frac{mt_{k+1}}{N_kt_k+mt_{k+1}}\left(\frac{1}{m}\sum_{j=1}^{m}\cE_{t_{k+1}}\left(y^{k+1}_j,\phi_1\right)\right)  \right) \\
         & + \frac{N_kt_k}{N_kt_k+mt_{k+1}}D\left(\frac{1}{N_k}\sum_{j=1}^{N_k}\cE_{t_k}\left(y^k_j,\phi_1\right), \nu_k\right)                                                                                                                                     \\
         & + \frac{mt_{k+1}}{N_kt_k+mt_{k+1}}D\left(\frac{1}{m}\sum_{j=1}^{m}\cE_{t_{k+1}}\left(y^{k+1}_j,\phi_1\right), \nu_{k+1}\right)                                                                                                                           \\
         & \leq 2\delta_k + 2\delta_k + \delta_k + \delta_{k+1} \leq 6\delta_k.
    \end{align*}
    By \eqref{eq:tau} and the affinity of $\tau$ from Lemma \ref{lem:tau}, we have
    \begin{align*}
         & \phantom{=} D\left(\cE_s(z,\Phi), \frac{N_kt_k}{N_kt_k+mt_{k+1}}\mu_k + \frac{mt_{k+1}}{N_kt_k+mt_{k+1}}\mu_{k+1}\right)             \\
         & = D\left(\tau\cE_s(z,\phi_{1}), \tau\left(\frac{N_kt_k}{N_kt_k+mt_{k+1}}\nu_k + \frac{mt_{k+1}}{N_kt_k+mt_{k+1}}\nu_{k+1}\right) \right) \\
         & \leq \ve_k.
    \end{align*}

    Combining both cases, we obtain $D\bigl(\mathcal{E}_s(z,\Phi),\mu_k\bigr)
    \leq \varepsilon_k+D(\mu_k,\mu_{k+1})$
    for every integer $s\in[L_k,L_{k+1})$.
    For a real number $s\in[L_k,L_{k+1})$, we have $\lfloor s\rfloor\in[L_k,L_{k+1})$, since the numbers $L_k$
    are integers. By Lemma~\ref{lem:D},
    \[
    D\bigl(\mathcal{E}_s(z,\Phi),
       \mathcal{E}_{\lfloor s\rfloor}(z,\Phi)\bigr)<\frac{1}{s}.
    \]
    Consequently,
    \[
    D\bigl(\mathcal{E}_s(z,\Phi),\mu_k\bigr)
    \leq \varepsilon_k+D(\mu_k,\mu_{k+1})+\frac{2}{s}.
    \]
    Since $\lim_{i \to \infty}\ve_i = 0$ and $\lim_{i \to \infty}D(\mu_i, \mu_{i+1}) = 0$, the set $\left\{\cE_s(z, \Phi):s \in \Rb^+\right\} $ has the same limit points as $\left\{\mu_i:i \in \Nb^+\right\}$. Since $\bigcap_{j\in\Nb^+} \overline{\left\{\mu_i\right\}_{i=j}^\infty} = \mathcal{K}$, we have $V_{\Phi}(z) = \mathcal{K}$. Therefore, $G \subseteq G_{\mathcal{K}}(\Phi)$.

    \medskip

    \textit{Lower bound of topological entropy.} Finally, we prove that
    \[\htop\left(\Phi,G\right) \geq H - 2\eta.\]
    Let
    \[\omega_k := \frac{1}{\left\lvert F_k\right\rvert}\sum_{z_k \in F_k}\delta_{z_k}\]
    supported on $\Lambda_k$. Take a converging subsequence $\{ \omega_{k_{n}} \}$ of $\{ \omega_{k} \}$ and let $\omega=\lim_{n \rightarrow \infty}\omega_{k_{n}}$. Hence, $\omega$ is supported on $\Lambda$. Given $p \in \Nb^+$, we have $F_{p+q} \subseteq G_{p + q}$ and hence $\omega_{p+q}\left(G_{p+q}\right) = 1$ for any $q \in \Nb^+$, which implies that $\omega_{p + q}\left(G_p\right) = 1$ for any $q \in \Nb^+$. Therefore, $\omega\left(\overline{G_p}\right) \geq \limsup_{n \to \infty}\omega_{k_n}\left(\overline{G_p}\right) = 1$ and hence $\omega\left(G\right) = \lim_{i \to \infty} \omega\left(\overline{G_{i}}\right) = 1$.

    Fix $z \in G$ and a positive integer $s \geq L_1$. There exists $k \in \Nb^+$ and $1 \leq j \leq N_{k+1}+1$ such that
    \[L_k + (j-1)\left(t_{k+1}+T_{k+1}\right) \leq s < L_{k} +j\left(t_{k+1}+T_{k+1}\right).\]
    Since $z \in G$, there exists $\left(\overline{y^1}',\dots,\overline{y^{k+1}}'\right) \in \Gamma_{t_1}^{N_1} \times \dots \times \Gamma_{t_{k+1}}^{N_{k+1}}$ such that
    \[z \in \overline{G_{k+1}\left(z_{k}\left(\overline{y^1}',\dots,\overline{y^{k}}'\right) ,\overline{y^{k+1}}',2\delta_{k+1}\right)}.\]
    For any $k_n \geq k + 1$ and any $\left(\overline{y^1},\dots,\overline{y^{k_n}}\right) \in \Gamma_{t_1}^{N_1} \times \dots \times \Gamma_{t_{k_n}}^{N_{k_n}}$, if
    \[\left(\overline{y^1},\dots,\overline{y^{k}},\left(y_1^{k+1},\dots,y_{j-2}^{k+1}\right) \right) \ne \left(\overline{y^1}',\dots,\overline{y^{k}}',\left({y_1^{k+1}}',\dots,{y_{j-2}^{k+1}}'\right)\right)\]
    where we denote $\left(y_1^{k+1},\dots,y_{j-2}^{k+1}\right) = \left({y_1^{k+1}}',\dots,{y_{j-2}^{k+1}}'\right) = \emptyset$ if $1 \leq j \leq 2$, then $z$ and $z_{k_n}\left(\overline{y^1},\dots,\overline{y^{k_n}}\right) $ are $\left(L_{k}+\left(j-1\right) \left(t_{k+1}+T_{k+1}\right),\ve^*-2\delta_1-\sum_{i=1}^{k_n}\delta_i\right)$-separated for $\phi_1$, hence $\left(L_{k}+\left(j-1\right) \left(t_{k+1}+T_{k+1}\right),\ve^*/3\right)$-separated for $\phi_1$. It implies that
    \[z_{k_n}\left(\overline{y^1},\dots,\overline{y^{k_n}}\right) \notin B_{L_{k}+\left(j-1\right) \left(t_{k+1}+T_{k+1}\right)}\left(z,\ve^*/3, \phi_1\right).\]
    By \eqref{eq:Gamma_cardianlity},
    \begin{align*}
        \omega_{k_n}\left(B_s\left(z,\ve^*/3, \phi_1\right) \right) & \leq \omega_{k_n}\left(B_{L_k + (j-1)\left(t_{k+1}+T_{k+1}\right)}\left(z,\ve^*/3, \phi_1\right)  \right)                                                                                                      \\
        & \leq \frac{\left\lvert \Gamma_{t_{k+1}}\right\rvert^{N_{k+1}-(j-2)_{+}} \cdot \prod_{i=k+2}^{k_n}\left\lvert \Gamma_{t_i}\right\rvert^{N_i}}{\prod_{i=1}^{k_n}\left\lvert \Gamma_{t_i}\right\rvert^{N_i} } \\
        & = \frac{1}{\left\lvert \Gamma_{t_{k+1}}\right\rvert^{(j-2)_{+}}\cdot\prod_{i=1}^{k}\left\lvert \Gamma_{t_i}\right\rvert^{N_i}}                                                                         \\
        & \leq e^{-t_{k+1}(j-2)_{+}\left(h_{\nu_{k+1}}\left(\phi_1\right) - \eta\right)-\sum_{i=1}^{k}t_iN_i\left(h_{\nu_i}\left(\phi_1\right) - \eta\right)}                                                    \\
        & \leq e^{-\left(t_{k+1}(j-2)_{+}+\sum_{i=1}^{k}t_iN_i\right)\left(H-\eta\right)}
    \end{align*}
    where $(a)_{+} := \max\left\{a,0\right\} $. By \eqref{eq:eta},\eqref{eq:N_D}
    \begin{align*}
        \frac{s-t_{k+1}(j-2)_{+}-\sum_{i=1}^{k}t_iN_i}{t_{k+1}(j-2)_{+}+\sum_{i=1}^{k}t_iN_i}
         & \leq \frac{t_1+T_1 + \sum_{i=1}^{k}\left(T_i\left(N_i+1\right) + t_i\right) + 2t_{k+1}+jT_{k+1}}{t_{k+1}(j-2)_{+}+\sum_{i=1}^{k}t_iN_i}                                             \\
         & \leq \frac{t_1+T_1}{t_{k+1}(j-2)_{+}+\sum_{i=1}^{k}t_iN_i} + \frac{\sum_{i=1}^{k}\left(T_i\left(N_i+1\right) + t_i\right)}{t_{k+1}(j-2)_{+}+\sum_{i=1}^{k}t_iN_i}                   \\
         & + \frac{2t_{k+1}+jT_{k+1}}{t_{k+1}(j-2)_{+}+\sum_{i=1}^{k}t_iN_i}                                                                                                                   \\
         & \leq \frac{t_1+T_1}{t_1N_1} + \frac{\sum_{i=1}^{k}\left(T_i\left(N_i+1\right) + t_i\right)}{\sum_{i=1}^{k}t_iN_i} + \frac{2t_{k+1}+jT_{k+1}}{t_{k+1}(j-2)_{+}+\sum_{i=1}^{k}t_iN_i} \\
         & \leq \delta + \delta + \delta = 3\delta \leq \frac{\eta}{H-2\eta}.
    \end{align*}
    Therefore,
    \[s(H-2\eta) \leq \left(t_{k+1}(j-2)_{+}+\sum_{i=1}^{k}t_iN_i\right)\left(H-\eta\right).\]
    Then
    \[\omega\left(B_s\left(z,r, \phi_1\right) \right) \leq \omega\left(B_s\left(z,\ve^*/3, \phi_1\right) \right) \leq \liminf_{n \to \infty}\omega_{k_n}\left(B_s\left(z,\ve^*/3, \phi_1\right) \right) \leq e^{-s(H-2\eta)}\]
    for any $0 < r < \ve^* / 3$, any $s \geq L_1$ and any $z \in G$. Therefore,
    \begin{align*}
        \lh_{\omega}\left(\phi_1,z\right) & = \lim_{r \to 0} \liminf_{n \to \infty} - \frac{1}{n}\log\omega\left(B_n\left(z,r, \phi_1\right) \right) \\
                                          & \geq H - 2\eta
    \end{align*}
    Then
    \[\lh_{\omega}\left(\phi_1\right) = \int\lh_{\omega}\left(\phi_1,z\right) \dif \omega(z) \geq H - 2\eta\]
    and hence $\htop\left(\Phi,G\right) \geq H - 2\eta$ by Lemma \ref{lem:lower_entropy}, which completes the proof by Lemma \ref{lem:htop} and the fact that $G \subseteq G_{\mathcal{K}}(\Phi) \cap \Trans(\Phi,\Lambda) \cap U$.

    For the case of $\liminf_{i \to \infty} h_{\mu_i}(\phi_1) = 0$, it suffices to prove that $G_{\mathcal{K}}\left(\Phi\right) \cap \Trans(\Phi,\Lambda) \cap U$ is nonempty. We can dispense with the restrictions on $\delta$ and $\eta$ as well as the quantitative entropy estimates at the end of this proof, and the construction establishing the non-emptiness of the saturated set still holds.
\end{proof}
\begin{remark}\label{rem:nontransitive-version}
    The assumption
    \[
        \overline{\bigcup_{n\in\Nb^+}\Lambda_n}=\Lambda
    \]
    is only used to ensure that the constructed points are transitive in \(\Lambda\) and to localize the construction in a prescribed nonempty relatively open subset of \(\Lambda\). Without this assumption, and without requiring transitivity or localization, all the remaining hypotheses of Theorem \ref{thm:5.1} still imply
    \[
        \htop\bigl(\Phi,G_{\mathcal K}(\Phi)\bigr)
        \geq
        \liminf_{i\to\infty} h_{\mu_i}(\phi_1).
    \]

    Indeed, one only needs to remove the initial orbit segment used to enter the given open set and the auxiliary reference blocks associated with the points $x_i$. The resulting tree construction consists solely of the  \(\Gamma_{t_i}\)-blocks. The estimates used in the proof of the entropy lower bound remain valid, with fewer error terms.
\end{remark}

\section{Proofs of Theorems \ref{thm:A} and \ref{thm:B} and applications}\label{sec:6}
In this section, we apply the abstract lower-bound criterion established in Theorem~\ref{thm:5.1} to geometric Lorenz attractors and singular hyperbolic attractors. The main task is to verify, in the corresponding generic settings, the hypotheses required there: horseshoe approximation, entropy-expansiveness, and the existence of non-decreasing mixing horseshoes. Combining this lower bound with the upper bound proved in Section~\ref{sec:3}, we obtain Theorems~\ref{thm:A} and~\ref{thm:B}. We then use these entropy formulae to recover the previous multifractal results on irregular sets and level sets.

\begin{lemma}\label{lem:lorenz_residual}
    For every $r \geq 2$, there exists a residual subset $\mathcal{R}^r \subset \cX^r\left(M^3\right)$ such that, if $\Lambda$ is a geometric Lorenz attractor of $X \in \mathcal{R}^r$, then the following statements hold.
    \begin{enumerate}
        \item \cite{shi2023multifractal}*{Corollary 4.14(1)} $\left(\Phi,\Lambda\right)$ satisfies the horseshoe approximation property.
        \item \cite{pacifico2025equilibrium}*{Theorem 2.17}\cite{liao2013entropy}*{Proposition 2.4} $\left(\Phi,\Lambda\right)$ is entropy-expansive, and hence satisfies the uniform separation property.
        \item \cite{shi2023multifractal}*{Proposition 2.6} $\Lambda$ is a homoclinic class. Moreover, any two periodic orbits contained in $\Lambda$ are homoclinically related.
        \item $\Lambda$ has two periodic points with incommensurate periods.
    \end{enumerate}
\end{lemma}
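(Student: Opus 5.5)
Items (1)--(3) are quoted from the literature as indicated: each of \cite{shi2023multifractal}*{Corollary 4.14(1)}, \cite{pacifico2025equilibrium}*{Theorem 2.17} together with \cite{liao2013entropy}*{Proposition 2.4}, and \cite{shi2023multifractal}*{Proposition 2.6} provides a residual set of vector fields on which the corresponding property holds for every geometric Lorenz attractor. The uniform separation part of (2) then follows from entropy-expansiveness via Lemma \ref{lem:AEE-to-USP}. Since a finite intersection of residual sets is residual, the only new work is to produce a residual set $\mathcal{R}_4$ on which (4) holds. We then take $\mathcal{R}^r$ to be the intersection of $\mathcal{R}_4$ with the three residual sets above.

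For (4) the plan is a Kupka--Smale type genericity argument for periods. Let $\mathcal{G}_{KS}\subset\cX^r(M^3)$ be the Kupka--Smale residual set, on which all critical elements are hyperbolic. For $n\in\Nb^+$, let $\mathcal{O}_n$ be the set of $X$ such that every pair of hyperbolic periodic orbits of $X$ with periods at most $n$, lying in a fixed neighbourhood of a geometric Lorenz attractor, has ratio of periods that is not a rational $p/q$ with $p,q\le n$. An alternative formulation of $\mathcal{O}_n$ is that $X$ admits two periodic orbits of period at most $n$ with irrational period ratio, or at least with a ratio avoiding these finitely many rationals. We will show $\mathcal{O}_n$ is open and dense in the appropriate sense. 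Openness comes from the continuous dependence of hyperbolic periodic orbits and their periods on $X$. Density uses a local perturbation: multiply $X$ by a function $1+\epsilon\rho$, with $\rho$ a bump supported in a small tube around one orbit $\gamma_1$ and away from the others. This keeps the orbit set and changes the period of $\gamma_1$ by an amount varying continuously and monotonically in $\epsilon$, so a rational coincidence can be destroyed. This reparametrization has the same $C^r$ regularity, so the perturbation stays in $\cX^r$.

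To make the argument robust in the Lorenz setting, I would instead use this cleaner route. Geometric Lorenz attractors form an open set, and each contains infinitely many periodic orbits, since by (3) it is a nontrivial homoclinic class. Fix $X_0$ with such an attractor and a continuation of two distinct hyperbolic periodic orbits $\gamma_1(X),\gamma_2(X)$ defined on a neighbourhood $\mathcal{N}$ of $X_0$. The set $\{X\in\mathcal{N}:\ \mathrm{per}(\gamma_1(X))/\mathrm{per}(\gamma_2(X))\notin\mathbb{Q}\}$ is a countable intersection over rationals $p/q$ of the open sets where the ratio differs from $p/q$. Each such set is dense by the bump reparametrization above, so the whole set is residual in $\mathcal{N}$. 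A standard countable-cover (Lindel\"of) argument over such neighbourhoods, together with the open complement of the closure of the Lorenz locus, globalizes this to a residual $\mathcal{R}_4\subset\cX^r(M^3)$. If there is concern that a generic $X$ has several attractors, we note that each continuation can be indexed by the countable covering, so each geometric Lorenz attractor of $X$ receives a pair.

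The main obstacle is the density step. We must ensure the reparametrizing bump changes the period of $\gamma_1$ without destroying $\gamma_2$ or the geometric Lorenz structure. This is handled by choosing the tube disjoint from $\gamma_2$ and from the singularity, and taking $\epsilon$ small so that the structure persists by openness. A secondary bookkeeping issue is to ensure that both orbits lie inside $\Lambda$ rather than merely near it. This holds because hyperbolic periodic orbits in the attracting neighbourhood of a geometric Lorenz attractor belong to the maximal invariant set, which is $\Lambda$.
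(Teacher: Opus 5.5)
Your proposal is correct in outline, and its key step is the same as the paper's. A positive time change $(1+\epsilon\rho)X$, with $\rho$ a bump near one orbit that vanishes near the other, keeps the orbit set, hyperbolicity and homoclinic relations, moves one period and fixes the other. The difference is in the genericity bookkeeping.

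\textbf{The paper's route.} The paper writes out Lemma~\ref{lem:sectional_residual}(4) and calls the Lorenz case similar. It lets $O_n$ enumerate finite unions of elements of a countable basis. It then defines the open set $\mathcal G_{i,j,p,q}$ of vector fields having two distinct, homoclinically related hyperbolic periodic orbits $\gamma_i\subset O_i$, $\gamma_j\subset O_j$ with $pT(\gamma_i)\neq qT(\gamma_j)$, and sets $\mathcal R_4=\bigcap_{i,j,p,q}\bigl(\mathcal G_{i,j,p,q}\cup\Int(\cX^r\setminus\mathcal G_{i,j,p,q})\bigr)$. Suppose two periodic orbits of $\Lambda$ had $pT(\gamma_i)=qT(\gamma_j)$. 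Choosing $O_i,O_j$ inside isolating neighbourhoods gives $X\notin\mathcal G_{i,j,p,q}$, so $X$ lies in the interior of the complement, and the time change contradicts this.

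\textbf{What each approach buys.} The paper's argument needs no continuation of the attractor and no covering argument. It also yields the stronger statement that all periodic orbits of $\Lambda$ are pairwise incommensurate. On the other hand, it relies on item (3) to know that every pair of periodic orbits in $\Lambda$ is hyperbolic and homoclinically related. Your route gives exactly one distinguished pair per attractor and does not really need item (3) (see below). In exchange it needs two inputs you only sketch:
\begin{enumerate}
\item the $C^r$-robustness of geometric Lorenz attractors, so that $\bigcap_{t>0}\phi^X_t(\overline U)$ is still the attractor containing $\gamma_1(X),\gamma_2(X)$;
\item a precise version of your remark on several attractors. Run Lindel\"of on the pairs $(X,\Lambda)$, e.g.\ inside $\cX^r(M^3)$ times the space of compact subsets of $M$ with the Hausdorff metric, rather than on vector fields alone, since one neighbourhood $\mathcal N_k$ of $X_0$ follows only one attractor. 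Then intersect the open dense sets $\mathcal A_{k,m}\cup(\cX^r\setminus\overline{\mathcal N_k})$, where $\bigcap_m\mathcal A_{k,m}$ is your residual subset of $\mathcal N_k$.
\end{enumerate}

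\textbf{Two small corrections.} First, your initial definition of $\mathcal O_n$ is imprecise: the neighbourhood is not specified, and the ``alternative formulation'' is an existential condition describing a different set. Since your second route supersedes it, simply drop it. Second, at an arbitrary $X_0$ in the Lorenz locus you cannot appeal to item (3), which holds only on a residual set. Use instead that every geometric Lorenz attractor contains infinitely many periodic orbits, coming from periodic points of the expanding quotient map $f$ together with the fibre contraction of $P$. All of these orbits are hyperbolic by singular hyperbolicity.
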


\begin{lemma}\label{lem:sectional_residual}
    There exists a residual subset $\mathcal{R} \subset \cX^1\left(M\right)$ such that, if $\Lambda$ is a non-trivial singular hyperbolic attractor of $X \in \mathcal{R}$, then the following statements hold.
    \begin{enumerate}
        \item \cite{shi2023multifractal}*{Corollary 4.14(2)} $\left(\Phi,\Lambda\right)$ satisfies the horseshoe approximation property.
        \item \cite{pacifico2025equilibrium}*{Theorem 2.17}\cite{liao2013entropy}*{Proposition 2.4} $\left(\Phi,\Lambda\right)$ is entropy-expansive, and hence satisfies the uniform separation property.
        \item \cite{crovisier2021robust}*{Theorem B} $\Lambda$ is a homoclinic class, any two periodic orbits contained in $\Lambda$ are hyperbolic and homoclinically related.
        \item $\Lambda$ has two periodic points with incommensurate periods.
    \end{enumerate}
\end{lemma}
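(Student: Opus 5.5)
Items (1)--(3) are quoted from the literature, so the plan is to take $\mathcal{R}$ as the intersection of the residual sets supplied by those references with one further residual set that gives item (4). For (1) and (3) we use \cite{shi2023multifractal}*{Corollary 4.14(2)} and \cite{crovisier2021robust}*{Theorem B}. For (2), \cite{pacifico2025equilibrium}*{Theorem 2.17} gives entropy-expansiveness (indeed for every singular hyperbolic attractor, not only generic ones). Lemma~\ref{lem:AEE-to-USP} then yields the uniform separation property, since entropy-expansive implies asymptotically entropy-expansive. A countable intersection of residual sets is residual, so the only new content is item (4).

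For item (4), we use a Kupka--Smale-type genericity argument. For $T>0$, let $\mathcal{U}_T$ be the set of $X\in\cX^1(M)$ such that every pair of hyperbolic periodic orbits of $X$ with periods at most $T$ that are homoclinically related has incommensurate periods. A better choice is the weaker condition that the period ratio is irrational, or at least that the two periods are not both in $p\Zb$ for the relevant $p$.

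Strictly, incommensurability of two given numbers is not an open condition. The plan is therefore to argue as follows. Fix $X$ in a Kupka--Smale residual set, so that all periodic orbits are hyperbolic and there are finitely many of period at most $T$. Each such orbit has a continuation on a neighborhood, and its period varies continuously, and non-trivially under perturbation, by a local time-change of $X$ supported near one orbit and away from the other. For a pair $(\gamma_1,\gamma_2)$, the set of $Y$ near $X$ with $\mathrm{per}(\gamma_1(Y))/\mathrm{per}(\gamma_2(Y))\notin\mathbb{Q}$ is the complement of countably many level sets $\{\text{ratio}=q\}$. Each level set is closed with empty interior, because multiplying $Y$ near $\gamma_1$ by a factor $(1+s)$ changes the ratio strictly monotonically in $s$.

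Hence, locally, the condition holds on a countable intersection of open dense sets. Covering $\cX^1(M)$ by countably many such neighborhoods, using second countability, and letting $T\in\Nb$ vary yields a residual set $\mathcal{R}_4$.

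Finally, take $X\in\mathcal{R}\cap\mathcal{R}_4$ and a non-trivial singular hyperbolic attractor $\Lambda$. By (3), $\Lambda$ is a non-trivial homoclinic class, so it contains at least two distinct periodic orbits. Both are hyperbolic and homoclinically related, and their periods are bounded by some $T\in\Nb$. By the definition of $\mathcal{R}_4$, their periods are incommensurate, which gives (4).

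The main obstacle is making the perturbation argument rigorous in $\cX^1$, and in $\cX^r$ for the Lorenz lemma. The time-change $Y=(1+s\rho)X$, with $\rho$ a bump function supported in a small tube around a point of $\gamma_1$, is $C^r$ small, preserves hyperbolicity, and keeps both orbits. It changes only the period of $\gamma_1$, and does so monotonically, provided the tube avoids $\gamma_2$, which is possible since the two orbits are distinct. The remaining care is to handle continuations uniformly on a neighborhood so that the countable union of closed nowhere dense sets is well defined.
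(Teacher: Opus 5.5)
Your treatment of (1)--(3) agrees with the paper: they are quoted, and entropy-expansiveness gives uniform separation via Lemma~\ref{lem:AEE-to-USP}. Your argument for (4) is correct. It uses the same key perturbation as the paper, a positive time change supported near one orbit and vanishing near the other, but it organizes the genericity differently.

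You work locally around Kupka--Smale fields. On a neighborhood where continuations exist, you show that each rational level set $\{\mathrm{per}(\gamma_1(Y))/\mathrm{per}(\gamma_2(Y))=q\}$ is closed and nowhere dense, and then you patch the local residual sets together with a countable (Lindel\"of) cover.

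The paper instead uses a dichotomy trick. Let $\{O_n\}$ be the finite unions of elements of a countable basis. The set $\mathcal{G}_{i,j,p,q}$ of fields having two distinct hyperbolic, homoclinically related periodic orbits $\gamma_i\subset O_i$, $\gamma_j\subset O_j$ with $pT(\gamma_i)\neq qT(\gamma_j)$ is open. Hence $\mathcal{G}_{i,j,p,q}\cup\operatorname{Int}(\mathcal{X}^1(M)\setminus\mathcal{G}_{i,j,p,q})$ is automatically open and dense. Now take $X$ in the intersection and suppose $pT(\gamma_i)=qT(\gamma_j)$. Isolating $\gamma_i,\gamma_j$ in suitable $O_i,O_j$ puts $X$ in the interior of the complement, and the time change $\rho_\varepsilon X$ contradicts this. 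The paper's route needs neither the Kupka--Smale theorem, nor continuation bookkeeping, nor a local-to-global residuality argument, and the perturbation is only performed at the given field.

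Your route yields a cleaner generic statement: all pairs of distinct periodic orbits have irrational period ratio, with no homoclinic relation required. Its cost is one point you must make explicit for your final deduction. Each neighborhood $\mathcal{V}$ of a center $X_k$ must be chosen so that every periodic orbit of period at most $T$ of every $Y\in\mathcal{V}$ is the continuation of one of the finitely many periodic orbits of $X_k$ of period at most $T+1$. This holds by hyperbolicity of those orbits and of the singularities, together with a compactness argument as in the Kupka--Smale proof. Without it, ``by the definition of $\mathcal{R}_4$'' does not reach periodic orbits of $X$ that are not continuations.

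Two smaller points. The countable cover is of an open dense set, not of all of $\mathcal{X}^1(M)$. Your first $\mathcal{U}_T$ paragraph can be dropped, since for two periods ``incommensurate'' already means irrational ratio.
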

\begin{remark}
Item (2) of each of Lemmas~\ref{lem:lorenz_residual} and~\ref{lem:sectional_residual} holds for every singular hyperbolic set with hyperbolic singularities, without any residual assumption.
\end{remark}

We only prove Lemma \ref{lem:sectional_residual}(4). The proof of Lemma \ref{lem:lorenz_residual}(4) is similar.

\begin{proof}[Proof of Lemma \ref{lem:sectional_residual}(4).]
 Choose a countable basis $\{U_n\}_{n\ge 1}$ of $M$, and let $\{O_n\}_{n\ge 1}$ be an enumeration of all finite unions of elements of this basis.

For $i,j,p,q\in\mathbb N^+$, let $\mathcal G_{i,j,p,q}$ be the set of vector fields $X\in\mathfrak X^1(M)$ such that $X$ has two distinct hyperbolic periodic orbits $\gamma_i,\gamma_j$ satisfying
$$
\gamma_i\subset O_i,\quad
\gamma_j\subset O_j,\quad
\text{$\gamma_i$ and $\gamma_j$ are homoclinically related},\quad
pT(\gamma_i)\neq qT(\gamma_j).
$$
The set $\mathcal G_{i,j,p,q}$ is open because
\begin{itemize}
    \item hyperbolic periodic orbits admit continuations under $C^1$ perturbation.
    \item the transverse homoclinic intersections persist under $C^1$ perturbation. 
    \item periods depend continuously on the vector field. Therefore the strict inequality $pT(\gamma_i(Y))\neq qT(\gamma_j(Y))$ is an open property.
\end{itemize}
Set
\begin{align*}
    \mathcal N_{i,j,p,q}
&=\operatorname{Int}\bigl(\mathfrak X^1(M)\setminus \mathcal G_{i,j,p,q}\bigr)  \\
&=\{ X| \text{$X$ has a $C^1$-neighborhood $\mathcal{U}$, such that $\mathcal{U} \cap \mathcal{G}_{i,j,p,q}=\emptyset$} \},
\end{align*}
where $\operatorname{Int}(\cdot)$ denotes the interior with
respect to the $C^1$ topology on $\mathfrak{X}^1(M)$. Then $\mathcal G_{i,j,p,q}\cup \mathcal N_{i,j,p,q}$ is open and dense in $\mathfrak X^1(M)$. Hence
$$
\mathcal R_4
=
\bigcap_{i,j,p,q\in\mathbb N^+}
\bigl(\mathcal G_{i,j,p,q}\cup \mathcal N_{i,j,p,q}\bigr)
$$
is residual.

Let $\mathcal R_{123}$ be a residual subset of $\mathfrak X^1(M)$ on which the first three assertions of Lemma~\ref{lem:sectional_residual} hold. Now set
$\mathcal R=\mathcal R_{123}\cap\mathcal R_4.$
Then $\mathcal R$ is residual. We claim that $\mathcal R$ is the desired residual subset.

\medskip
\noindent\textbf{Claim.}
    For every $X\in\mathcal R$ and every non-trivial singular hyperbolic attractor $\Lambda$ of $X$, any two distinct periodic orbits contained in $\Lambda$ have incommensurate periods.

\begin{proof}[Proof of the claim]
Let $X\in\mathcal R$, and let $\Lambda$ be a non-trivial singular hyperbolic attractor of $X$. By \cite{crovisier2021robust}*{Theorem B}, periodic points are dense in $\Lambda$. Since $\Lambda$ is non-trivial, let $\gamma_i,\gamma_j\subset\Lambda$ be any two distinct periodic orbits. Since $X\in\mathcal R_{123}$, the third assertion holds for $X$. Hence $\gamma_{i}$ and $\gamma_{j}$ are hyperbolic and homoclinically related.

Suppose, by contradiction, that their periods are commensurate. Then there exist $p,q\in\mathbb N^+$ such that $pT(\gamma_{i})=qT(\gamma_{j}).$
Since $\gamma_{i}$ and $\gamma_{j}$ are hyperbolic, choose small isolating neighborhoods $V_{\gamma_{i}},V_{\gamma_{j}}$ of them. Taking finite unions of basis elements, choose $O_{\gamma_{i}},O_{\gamma_{j}}$ such that
$$
\gamma_{i} \subset O_{\gamma_{i}}\subset V_{\gamma_{i}},\qquad
\gamma_{j}\subset O_{\gamma_{j}}\subset V_{\gamma_{j}}.
$$
Thus the only periodic orbit of $X$ contained in $O_{\gamma_{i}}$ is $\gamma_{i}$, and the only periodic orbit of $X$ contained in $O_{\gamma_{j}}$ is $\gamma_{j}$. Since $pT(\gamma_{i})=qT(\gamma_{j})$, we have
$X\notin\mathcal G_{i,j,p,q}$. Since $X\in\mathcal R_4$, it follows that
$X\in\mathcal N_{i,j,p,q}$.
Therefore there exists a $C^1$-neighborhood $\mathcal U$ of $X$ such that
$$
\mathcal U\cap\mathcal G_{i,j,p,q}=\varnothing.
$$

For any $\varepsilon>0$, by a bump function argument, we can choose a positive smooth function $\rho_{\varepsilon}$, such that $\rho_{\epsilon}=1+\varepsilon$ on $\gamma_{i}$ and $\rho_{\epsilon}=1$ on $\gamma_{j}$, and the $C^r$-distance between $\rho_{\varepsilon}$ and $1$ goes to $0$ as $\varepsilon \rightarrow 0$. Put
$$
Y_{\varepsilon}=\rho_{\varepsilon} X.
$$
We have $Y_{\varepsilon}\in\mathcal U$ when $\varepsilon$ is small enough, and $Y_{\varepsilon} \overset{C^r}{\rightarrow} X$ as $\varepsilon \rightarrow 0$. Since this is a positive time change, $X$ and $Y_{\varepsilon}$ have the same orbit sets. In particular, no new periodic orbit is created in the isolating neighborhoods $V_{\gamma_{i}}$ and $V_{\gamma_{j}}$. Moreover, the orbit sets of $\gamma_i$ and $\gamma_j$, their hyperbolicity, and their homoclinic relation are preserved. The only change is that the period of $\gamma_i$ is modified, while the period of $\gamma_j$ is unchanged:
$$
T_{Y_{\varepsilon}}(\gamma_{i})=\frac{T_X(\gamma_{i})}{1+\varepsilon},
\qquad
T_{Y_{\varepsilon}}(\gamma_{j})=T_X(\gamma_{j}).
$$
Choosing $\varepsilon\neq0$ small enough so that $pT_{Y_{\varepsilon}}(\gamma_{i})\neq qT_{Y_{\varepsilon}}(\gamma_{j})$, 
we obtain
$Y_{\varepsilon}\in\mathcal G_{i,j,p,q}\cap\mathcal U$, 
which is a contradiction.

Thus $T(\gamma_{i})/T(\gamma_{j})\notin\mathbb Q$. This proves the claim. 
\end{proof}

The claim immediately implies that every non-trivial singular hyperbolic attractor of every $X\in\mathcal R$ contains two periodic orbits with incommensurate periods. This completes the proof. 
\end{proof}

\begin{remark}
    The claim gives a slightly stronger conclusion than Lemma \ref{lem:sectional_residual}(4): for $X\in\mathcal R$, the periodic orbits contained in any non-trivial singular hyperbolic attractor of $X$ are pairwise incommensurate.
\end{remark} 
The homoclinic relatedness and incommensurate-period property established above yield the non-decreasing sequence of mixing horseshoes required below. Indeed, Lemma~\ref{lem:larger_horseshoe} allows us to merge homoclinically related horseshoes, while the presence of periodic orbits with incommensurate periods guarantees that each enlarged horseshoe is topologically mixing.

\subsection{Proofs of Theorem \ref{thm:A} and Theorem \ref{thm:B}}
\begin{proof}[Proofs of Theorem \ref{thm:A} and Theorem \ref{thm:B}]
    We only prove Theorem \ref{thm:A}; the proof of Theorem \ref{thm:B} is similar.
    By Theorem \ref{thm:upper_htop}, one has
    \[\htop\left(\Phi, G_{\mathcal{K}}(\Phi) \right) \leq \inf\left\{h_\mu\left(\phi_1\right) :\mu \in \mathcal{K}\right\}.\]

    Let $U \subseteq M$ be a nonempty open set with $U \cap \Lambda \ne \emptyset$. It suffices to show that
    \begin{equation}\label{eq:saturated_lower_bound}
        \htop\left(\Phi, G_{\mathcal{K}}(\Phi) \cap \Trans\left(\Phi, \Lambda\right) \cap U \right) \geq \inf\left\{h_\mu\left(\phi_1\right) :\mu \in \mathcal{K}\right\}.
    \end{equation}    
    Let $H =\inf \left\{h_\mu\left(\phi_1\right) : \mu \in \mathcal{K}\right\}$. Assume $H > 0$. Similar to the argument in \cite{pfister2007topological}*{Page 944}, there exists a sequence of measures $\left\{\mu_i \in \M(\Phi, \Lambda):i\in\Nb^+\right\}$ in $\mathcal{K}$ such that
    \begin{enumerate}
        \item $\overline{\left\{\mu_i\right\}_{i=j}^\infty} = \mathcal{K}$,
        \item $D\left(\mu_i, \mu_{i+1}\right) \to 0$ as $i \to \infty$,
        \item $\inf\left\{h_{\mu_i}\left(\phi_1\right):i \in \Nb^+\right\} = H$.
    \end{enumerate}
    For any $0 < \eta < H/5$, since $\Lambda$ has the horseshoe approximation property, there exist a sequence of horseshoes $\left\{\Lambda_i:i\in \Nb^+\right\}$ and a sequence of measures $\left\{\nu_i \in \M_{\mathrm{erg}}\left(\Phi, \Lambda_{i}\right) : i \in \Nb^+\right\}$ such that
    \begin{enumerate}
        \item $\bigcap_{j\in\Nb^+} \overline{\left\{\nu_i\right\}_{i=j}^\infty} = \mathcal{K}$,
        \item $D\left(\nu_i, \nu_{i+1}\right) \to 0$ as $i \to \infty$,
        \item $\inf\left\{h_{\nu_i}\left(\phi_1\right):i \in \Nb^+\right\} > H - \eta$.
    \end{enumerate}

    By \cite{arroyo2007dynamical}*{Theorem A}, the set of periodic points is dense in $\Lambda$. We choose a countable dense family of periodic orbits
    $ \{ p_{n} \}_{n \geq 1} \subset \Lambda$.
    By Lemma \ref{lem:lorenz_residual}, each pair of periodic orbits is homoclinically related and there exist two periodic points whose periods are incommensurate. Then, by applying Lemma \ref{lem:larger_horseshoe} inductively, we may enlarge $\left\{\Lambda_i:i\in \Nb^+\right\}$ to a non-decreasing sequence of horseshoes $\left\{\widetilde{\Lambda}_i:i\in \Nb^+\right\}$, such that each $\tilde{\Lambda_n}$ contains the previously chosen measure-support horseshoes, periodic points $p_{1}$ to $p_{n}$, and two periodic orbits with incommensurate periods. Moreover, this implies
    $$ \overline{\bigcup_{n=1}^{\infty}\widetilde{\Lambda}_{n}}=\Lambda.   $$

    By Theorem \ref{incommensurates_implies_mixing} and Theorem \ref{mixing_implies_specification}, each $\left(\widetilde{\Lambda}_i,\Phi\right)$ is topologically mixing and hence satisfies the specification property for any $i \in \Nb^+$. Then (\ref{eq:saturated_lower_bound}) follows from Theorem \ref{thm:5.1} by the arbitrariness of $\eta$.

    When $H=0$, the same approximation and construction, without the quantitative entropy restrictions, give $G_{\mathcal{K}}(\Phi) \cap \Trans(\Phi,\Lambda)\cap U \neq \emptyset$, as explained at the end of the proof of Theorem~\ref{thm:5.1}. The entropy equality then follows from Theorem~\ref{thm:upper_htop}.
    
\end{proof}

\subsection{A relative saturated entropy formula}

The proof above uses the horseshoe approximation property to approximate the prescribed invariant measures by ergodic measures supported on a non-decreasing family of mixing horseshoes. The same idea admits a more general formulation. Namely, suppose that a non-decreasing family of compact invariant subsystems has the specification property. We first show that every invariant measure giving full mass to their union can be approximated, both in the weak* topology and in entropy, by ergodic measures supported on one of these subsystems. This approximation result, combined with Theorem~\ref{thm:5.1}, yields a relative saturated entropy
formula.
\begin{lemma}\label{lem:relative_entropy_density}
    Let $\Phi=(\phi_t)_{t\in\Rb}$ be a continuous flow on a compact metric
    space $\Lambda$, and assume that
    \[
        h_{\mathrm{top}}(\phi_1,\Lambda)<+\infty.
    \]
    Let $\{\Lambda_n\}_{n\in\Nb^+}$ be a non-decreasing sequence of compact
    $\Phi$-invariant subsets of $\Lambda$ such that
    $(\Lambda_n,\Phi)$ satisfies the specification property for every $n\in\Nb^+$. For any $\mu \in \mathcal{M}(\Phi,\Lambda)$ satisfying $\mu(\cup_{n \geq 1}\Lambda_{n})=1$ and every
    $\ve>0$, there exist $n\in\Nb^+$ and
    $\nu\in\Merg(\Phi,\Lambda_n)$ such that
    \[
        D(\mu,\nu)<\ve
        \qquad\text{and}\qquad
        h_\nu(\phi_1)>h_\mu(\phi_1)-\ve.
    \]
\end{lemma}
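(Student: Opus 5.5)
The plan is to reduce to a single subsystem and then use the standard specification-plus-entropy construction there. Two facts about $\mu$ drive the reduction. Since the $\Lambda_n$ are non-decreasing and $\mu(\bigcup_n\Lambda_n)=1$, we have $\mu(\Lambda_n)\to1$. Each $\Lambda_n$ is $\Phi$-invariant, so in the ergodic decomposition $\mu=\int\mu_\xi\,d\kappa(\xi)$ the $\kappa$-measure of the components supported on $\Lambda_n$ equals $\mu(\Lambda_n)$.

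\textbf{Step 1: reduction to a finite convex combination on one $\Lambda_n$.} Let $\kappa_n$ be $\kappa$ restricted to the components supported on $\Lambda_n$, renormalized, and let $\mu^{(n)}$ be the barycenter of $\kappa_n$. Then $\mu^{(n)}\in\M(\Phi,\Lambda_n)$ and $D(\mu,\mu^{(n)})\le 2(1-\mu(\Lambda_n))\to0$.

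For entropy, entropy is affine over the ergodic decomposition, so
\[
h_\mu(\phi_1)=\mu(\Lambda_n)\,h_{\mu^{(n)}}(\phi_1)+(1-\mu(\Lambda_n))\,h_{\mu''}(\phi_1).
\]
The remainder $\mu''$ has entropy at most $\htop(\phi_1,\Lambda)<\infty$, so $h_{\mu^{(n)}}(\phi_1)\to h_\mu(\phi_1)$. This is where the finiteness of topological entropy is used.

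Fix $n$ so that both errors are below $\ve/3$. Next approximate $\mu^{(n)}$ by a finite combination $\sum_{j=1}^m a_j\nu_j$ with $\nu_j\in\Merg(\Phi,\Lambda_n)$ and rational $a_j$. To do this, partition the ergodic components into finitely many cells of small $D$-diameter and pick in each cell a component whose entropy is at least the cell's average entropy minus a small error. This gives a combination within $\ve/3$ in $D$ and with entropy at least $h_{\mu^{(n)}}(\phi_1)-\ve/3$.

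\textbf{Step 2: an ergodic measure near the combination, via specification on $\Lambda_n$.} Pass to $\phi_1$-ergodic lifts $\tilde\nu_j$ with $\tau\tilde\nu_j=\nu_j$, using Lemma~\ref{lem:tau}\eqref{item:4.1.4}, \eqref{item:4.1.5} and \eqref{item:4.1.6}.

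For each $j$, choose $(t,\ve')$-separated sets of orbit segments of length $a_jt$ lying in $\Lambda_n$, with empirical measures close to $\tilde\nu_j$ and cardinality about $e^{a_jt(h_{\nu_j}-\eta)}$. Here we use the Katok entropy formula for the ergodic $\tilde\nu_j$ restricted to $\Lambda_n$. No expansiveness is needed, because we need separated sets only for one ergodic measure at a time, not uniformly.

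Concatenate one segment from each family using the periodic specification of $(\Lambda_n,\Phi)$ with gap $T_{\ve'}$, which is negligible relative to $t$. This produces a set $P$ of periodic points in $\Lambda_n$, all of period close to $t+mT$. Their empirical measures lie within about $\ve/3$ of $\sum_j a_j\nu_j$, and they are pairwise $(t,\ve'/2)$-separated with $|P|\ge e^{t(\sum a_jh_{\nu_j}-2\eta)}$.

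\textbf{Step 3: producing the ergodic measure $\nu$.} This is the main obstacle: a single periodic orbit has zero entropy. Specification is only assumed, not periodic specification, so we cannot rely on a subshift of periodic points directly.

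The route I would try first is the standard one. Form the closed invariant set $Y\subset\Lambda_n$ of points whose orbits $\ve'/4$-shadow arbitrary infinite concatenations of the chosen blocks, each followed by a gap of length $T_{\ve'}$. Existence of such points follows from specification and a compactness/Cantor-intersection argument, as in the construction of $G$ in Theorem~\ref{thm:5.1}.

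Every invariant measure on $Y$ is within about $\ve/3+O(\ve')$ of $\sum a_j\nu_j$, because each long orbit piece in $Y$ is a concatenation of blocks with nearly the right statistics. The separation gives $\htop(\phi_1,Y)\ge \frac{1}{t+mT}\log|P|$.

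By the variational principle on $Y$ there is an ergodic $\nu$ on $Y\subset\Lambda_n$ with entropy at least $\sum a_jh_{\nu_j}-3\eta$. Note that $\phi_1$-ergodicity on $Y$ implies $\Phi$-ergodicity of $\tau\nu$, and $\tau\nu$ is still supported in $\Lambda_n$ by invariance. Collecting the three $\ve/3$ errors yields $D(\mu,\nu)<\ve$ and $h_\nu(\phi_1)>h_\mu(\phi_1)-\ve$.
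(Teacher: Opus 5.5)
Your argument is correct in outline but takes a genuinely different route from the paper after the first reduction. The first half of your Step 1 matches the paper. The paper simply sets $\mu_n=\mu|_{\Lambda_n}/\mu(\Lambda_n)$, which is already $\Phi$-invariant because $\Lambda_n$ is, so no ergodic decomposition is needed. It then gets the weak$^*$ and entropy estimates from affinity and $\htop(\phi_1,\Lambda)<\infty$, as you do.

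From there the paper constructs nothing. It notes that flow specification on $\Lambda_n$, with integer gaps $\lceil T_\rho\rceil$, gives specification and hence the approximate product property for $\phi_1|_{\Lambda_n}$. It then cites the Pfister--Sullivan entropy-density theorem to obtain a $\phi_1$-ergodic $\widetilde\nu$ on $\Lambda_n$ close to $\mu_n$ in $D$ and in entropy. Finally it pushes forward by $\tau$, using $\tau\mu_n=\mu_n$, continuity of $\tau$, and Lemma~\ref{lem:tau}.

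Your remaining steps are in effect a hands-on reproof of that entropy-density theorem. They go through finite rational convex combinations, Katok's formula for each $\phi_1$-ergodic lift, concatenation by specification, a closed $\phi_1$-invariant shadowing set $Y$, and the variational principle on $Y$. This is self-contained and correctly shows that no expansiveness is needed. The cost is considerably more bookkeeping, whereas the paper reduces the whole lemma to checking that specification passes from $\Phi$ to $\phi_1$.

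If you keep your route, three points need fixing.

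First, Step 2 invokes periodic specification, which is not assumed. Drop the periodic set $P$ entirely. Step 3 only needs ordinary specification plus compactness, and the count of admissible block words $\prod_j|\Gamma_j|$, where $\Gamma_j$ should be $(a_jt,\ve')$-separated.

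Second, the block statistics are close to the lifts $\tilde\nu_j$, not to $\nu_j$. A $\phi_1$-ergodic lift can be far from its $\tau$-image; for example, a Dirac mass on a period-one orbit is far from the orbit's normalized time measure. Consequently the $\phi_1$-invariant measures on $Y$ lie near $\sum_j a_j\tilde\nu_j$, not near $\sum_j a_j\nu_j$. Your output measure must be $\tau\nu$, not $\nu$. Smallness of $D(\tau\nu,\sum_j a_j\nu_j)$ follows only after you choose the block-closeness parameter from the modulus of uniform continuity of $\tau$, as in \eqref{eq:tau} and \eqref{eq:relative_tau_continuity}.

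Third, to make $Y$ genuinely $\phi_1$-invariant, take $Y=\bigcup_{k<\ell}\phi_1^kY_0$. Here $Y_0$ is the closed set of points shadowing infinite concatenations from time $0$, and $\ell$ is the integer length of a compound block.
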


\begin{proof}
     Fix $\ve>0$. Take $a_n:=\mu(\Lambda_n)$. Since $\Lambda_n$ is non-decreasing and
    $\mu(\cup_{n \geq 1}\Lambda_{n})=1$,
    we have $\lim_{n \to \infty}a_n=1$. We first consider the case \(a_n<1\) for every \(n\ge1\).

    For every $n \in \Nb^+$ with $a_n>0$, define
    \[
        \mu_n:=\frac{1}{a_n}\mu|_{\Lambda_n}.
    \]
    Since $\Lambda_n$ is $\Phi$-invariant, $\mu_n\in\M(\Phi,\Lambda_n)$.
     Define similarly
    \[
        \widehat\mu_n
        :=
        \frac{1}{1-a_n}\mu|_{\Lambda\setminus\Lambda_n}.
    \]
    Then
    \[
        \mu=a_n\mu_n+(1-a_n)\widehat\mu_n.
    \]
    Since $\diam\Lambda\le1$, the Wasserstein metric satisfies
    $D(\alpha,\beta)\le1$ for all $\alpha,\beta\in\M(\Lambda)$. Hence
    \[
        D(\mu,\mu_n)
        \le
        (1-a_n)D(\widehat\mu_n,\mu_n)
        \le 1-a_n,
    \]
    and therefore $\lim_{n \to \infty}\mu_n=\mu$.

    By the affinity of measure-theoretical entropy,
    \[
        h_\mu(\phi_{1})
        =
        a_nh_{\mu_n}(\phi_{1})
        +(1-a_n)h_{\widehat\mu_n}(\phi_{1}).
    \]
    Since $h_{\widehat\mu_n}(\phi_{1})\le h_{\mathrm{top}}(\phi_{1},\Lambda)$, we obtain
    \[
        h_{\mu_n}(\phi_{1})
        \ge
        \frac{h_\mu(\phi_{1})-(1-a_n)h_{\mathrm{top}}(\phi_{1},\Lambda)}{a_n}.
    \]
    Letting $n \rightarrow \infty$, we have
    
    $$\liminf_{n\to\infty}h_{\mu_n}(\phi_{1})\ge h_\mu(\phi_{1}).$$
     Thus, we can choose $n\in\Nb^+$ sufficiently large such that
    \begin{equation}\label{eq:relative_choice_n}
        D(\mu,\mu_n)<\frac{\ve}{3}
        \qquad\text{and}\qquad
        h_{\mu_n}(\phi_{1})>h_\mu(\phi_{1})-\frac{\ve}{3}.
    \end{equation}

    We claim that $(\Lambda_n,\phi_{1})$ satisfies the specification property.
    Indeed, fix $\rho>0$, and let $T_\rho>0$ be given by the specification
    property of $(\Lambda_n,\Phi)$. Put
    \[
        N_\rho:=\lceil T_\rho\rceil.
    \]
    Given finitely many $\phi_{1}$-orbit segments $(x_1,\ell_1),\ldots,(x_k,\ell_k)$, define
    \[
        s_1:=0,
        \qquad
        s_{i+1}:=s_i+\ell_i+N_\rho
        \quad (1\le i<k).
    \]
    Applying the specification property of $\Phi$ to the corresponding
    flow-orbit segments, we obtain $y\in\Lambda_n$ such that
    \[
        d\bigl(\phi_{1}^{s_i+j}(y),\phi_{1}^j(x_i)\bigr)<\rho
    \]
    for every $1\le i\le k$ and every $0\le j<\ell_i$.
    Hence $(\Lambda_n,\phi_{1})$ has the specification property.

    Consequently, $(\Lambda_n,\phi_{1})$ has the approximate product property.
    By the entropy-density theorem of Pfister--Sullivan
    \cite{pfister2005large}*{Theorem 2.1}, there exists $\widetilde\nu\in\Merg(\phi_{1},\Lambda_n)$
    such that
    \begin{equation}\label{eq:relative_entropy_dense}
        D(\widetilde\nu,\mu_n)<r
        \qquad\text{and}\qquad
        h_{\widetilde\nu}(\phi_1)>h_{\mu_n}(\phi_1)-\frac{\ve}{3},
    \end{equation}
    where $r>0$ will be chosen below.

    Since $\tau$ is continuous and $\tau\mu_n=\mu_n$, there exists $r>0$
    such that
    \begin{equation}\label{eq:relative_tau_continuity}
         D(\alpha,\mu_n)<r
        \quad\Longrightarrow\quad
        D(\tau\alpha,\mu_n)<\frac{\ve}{3}.
    \end{equation}
    Let $\nu:=\tau\widetilde\nu$.
    Applying Lemma \ref{lem:tau} to the compact invariant space
    $\Lambda_n$, we have
    \[
        \nu\in\Merg(\Phi,\Lambda_n)
        \qquad\text{and}\qquad
        h_\nu(\phi_1)=h_{\widetilde\nu}(\phi_1).
    \]
    By \eqref{eq:relative_choice_n}, \eqref{eq:relative_entropy_dense}, and
    \eqref{eq:relative_tau_continuity},
    \[
        D(\mu,\nu)
        \le
        D(\mu,\mu_n)+D(\mu_n,\tau\widetilde\nu)
        <
        \frac{2\ve}{3}
        <
        \ve.
    \]
    Moreover,
    \[
        h_\nu(\phi_1)
        =
        h_{\widetilde\nu}(\phi_1)
        >
        h_{\mu_n}(\phi_1)-\frac{\ve}{3}
        >
        h_\mu(\phi_1)-\frac{2\ve}{3}
        >
        h_\mu(\phi_1)-\ve.
    \]
    This completes the proof. If \(a_n=1\) for some \(n\), the conclusion similarly follows from the entropy-density theorem of Pfister–Sullivan applied to \(\phi_1|_{\Lambda_n}\), followed by the averaging map \(\tau\).
\end{proof}

\begin{corollary}\label{cor:relative_saturated}
    Let $(\Lambda,\Phi)$ be topologically transitive, asymptotically
    entropy-expansive, and have finite topological entropy. Suppose that
    $\{\Lambda_n\}_{n\geq 1}$ is a non-decreasing sequence of compact
    $\Phi$-invariant subsets of $\Lambda$ such that
    \[
        \overline{\bigcup_{n\geq 1}\Lambda_n}=\Lambda
    \]
    and $(\Lambda_n,\Phi)$ satisfies the specification property for every
    $n\geq 1$.

    Then, for every nonempty compact connected set
    \[
        \mathcal{K}\subseteq \{ \mu \in \mathcal{M}(\Phi,\Lambda): \mu \left(\bigcup_{n \geq 1} \Lambda_{n} \right)=1 \}
    \]
    and every nonempty relatively open set $U\subseteq\Lambda$, one has
    \[
        \htop\left(
        \Phi,
        G_{\mathcal{K}}(\Phi)\cap \Trans(\Phi,\Lambda)\cap U
        \right)
        =
        \inf\{h_\mu(\phi_1):\mu\in \mathcal{K}\}.
    \]

    In particular, suppose that $(\Lambda,\Phi)$ is asymptotically entropy-expansive,
    has finite topological entropy, and satisfies the specification property.
    Then, for every nonempty compact connected set
    $\mathcal{K}\subseteq\mathcal{M}(\Phi,\Lambda)$ and every nonempty relatively open set $U\subseteq\Lambda$,
    \[
        \htop\left(
        \Phi,
        G_{\mathcal{K}}(\Phi)\cap\Trans(\Phi,\Lambda)\cap U
        \right)
        =
         \inf\{h_\mu(\phi_1):\mu\in \mathcal{K}\}.
    \]
\end{corollary}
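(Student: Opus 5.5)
The upper bound needs no work. Theorem~\ref{thm:upper_htop}, applied with $K=\Lambda$, gives
\[
\htop\bigl(\Phi,G_{\mathcal K}(\Phi)\cap\Trans(\Phi,\Lambda)\cap U\bigr)\le\htop\bigl(\Phi,G_{\mathcal K}(\Phi)\bigr)\le\inf_{\mu\in\mathcal K}h_\mu(\phi_1),
\]
by monotonicity (Lemma~\ref{lem:htop}). The whole task is therefore the lower bound, which I would get by verifying the hypotheses of Theorem~\ref{thm:5.1}. Hypotheses (H1) and (H2) are exactly the standing assumptions of the corollary. A relatively open $U\subseteq\Lambda$ can be written as $U'\cap\Lambda$ with $U'$ open in the ambient space, and $G_{\mathcal K}(\Phi)\subseteq\Lambda$, so the two formulations of the set agree. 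What remains is to build a sequence $\{\mu_i\}$ satisfying (M1)--(M3) with $\liminf_i h_{\mu_i}(\phi_1)\ge H-\eta$, where $H=\inf_{\mathcal K}h_\mu(\phi_1)$ and $\eta>0$ is arbitrary. This will replace the horseshoe approximation used in the proof of Theorems~\ref{thm:A} and~\ref{thm:B}.

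\textbf{Building the sequence.} I would copy that proof. First, as in \cite{pfister2007topological}, choose a sequence $\{\alpha_j\}\subseteq\mathcal K$ whose tail closures equal $\mathcal K$ and with $D(\alpha_j,\alpha_{j+1})\to0$; this uses compactness and connectedness of $\mathcal K$. Each $\alpha_j$ lies in $\mathcal K$, so $\alpha_j(\bigcup_n\Lambda_n)=1$. Apply Lemma~\ref{lem:relative_entropy_density} to $\alpha_j$ with tolerance $\varepsilon_j\downarrow0$, $\varepsilon_j<\eta$. This produces $n_j$ and $\beta_j\in\Merg(\Phi,\Lambda_{n_j})$ with $D(\alpha_j,\beta_j)<\varepsilon_j$ and $h_{\beta_j}(\phi_1)>h_{\alpha_j}(\phi_1)-\varepsilon_j\ge H-\eta$. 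Then $D(\beta_j,\beta_{j+1})\to0$, and $\{\beta_j\}$ has the same tail limit set $\mathcal K$.

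\textbf{Matching the indices, the main obstacle.} Condition (M3) asks for $\mu_i$ supported on $\Lambda_i$, while $\beta_j$ lives on $\Lambda_{n_j}$ with $n_j$ arbitrary. Two steps fix this. First, replace $n_j$ by $\max\{n_1,\dots,n_j\}$, so the indices are nondecreasing; this is allowed because the $\Lambda_n$ are nested. Second, reindex through the subsequence $\widetilde\Lambda_j:=\Lambda_{n_j}$. This is still nondecreasing, each member still has specification, and its union is still dense in $\Lambda$ provided $n_j\to\infty$. To guarantee $n_j\to\infty$, enlarge $n_j$ to at least $j$. A repeated value of $n_j$ causes no trouble, because a repeated set in the sequence is harmless for Theorem~\ref{thm:5.1}. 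Taking $\mu_j=\beta_j$, conditions (M1)--(M3) hold for $\{\widetilde\Lambda_j\}$, and Theorem~\ref{thm:5.1} yields a lower bound of at least $H-\eta$. Letting $\eta\to0$ finishes the case $H>0$. When $H=0$, only non-emptiness is needed, and that follows from the same construction, as remarked at the end of the proof of Theorem~\ref{thm:5.1}.

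\textbf{The ``in particular'' statement.} Take $\Lambda_n=\Lambda$ for all $n$. Specification implies topological transitivity, and every invariant measure trivially gives full mass to $\Lambda$. The first part then applies with $\mathcal K\subseteq\M(\Phi,\Lambda)$ arbitrary.
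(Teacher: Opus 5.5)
Your proposal is correct and follows the paper's proof essentially step for step. You take the upper bound from Theorem~\ref{thm:upper_htop}, choose a Pfister--Sullivan sequence in $\mathcal K$, approximate it via Lemma~\ref{lem:relative_entropy_density} by ergodic measures on $\Lambda_{n_j}$, and reindex monotonically as $\widetilde\Lambda_j=\Lambda_{n_j}$ with $n_j\ge j$ before invoking Theorem~\ref{thm:5.1}. The paper makes the indices strictly increasing while you allow repeats, which is equally fine.

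Your handling of the ``in particular'' part via $\Lambda_n=\Lambda$ is the intended one. The auxiliary $\eta$ is unnecessary, since $\varepsilon_j\to0$ already gives $\liminf_j h_{\beta_j}(\phi_1)\ge H$.
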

\begin{proof}
    Let $H=\inf\{h_\mu(\phi_1):\mu\in \mathcal{K}\}$. Choose a sequence
    $\{\mu_i\}_{i\geq 1}\subseteq \mathcal{K}$ such that
    \[
        \bigcap_{j\geq 1}\overline{\{\mu_i:i\geq j\}}=\mathcal{K}
        \qquad\text{and}\qquad
        D(\mu_i,\mu_{i+1})\to 0.
    \]
    By Lemma \ref{lem:relative_entropy_density}, for every $i\geq 1$ there
    exist $n_i\geq i$ and
    $\nu_i\in\mathcal{M}_{\rm erg}(\Phi,\Lambda_{n_i})$ such that
    \[
        D(\mu_i,\nu_i)<\frac1i
        \qquad\text{and}\qquad
        h_{\nu_i}(\phi_1)>h_{\mu_i}(\phi_1)-\frac1i.
    \]
    Define recursively
$m_1:=n_1$, $m_{i+1}:=\max\{n_{i+1},m_i+1\}$ and set $\widetilde{\Lambda}_i:=\Lambda_{m_i}$. We have
\[
\nu_i\in\mathcal{M}_{\mathrm{erg}}(\Phi,\Lambda_{n_i})
\subseteq
\mathcal{M}_{\mathrm{erg}}(\Phi,\widetilde{\Lambda}_i).
\]
Moreover, $\{\widetilde{\Lambda}_i\}_{i\geq1}$ is non-decreasing and
\[
\overline{\bigcup_{i\geq1}\widetilde{\Lambda}_i}=\Lambda.
\]
   The sequences
    $\{\nu_i\}_{i\geq 1}$ and $\{\widetilde{\Lambda}_i\}_{i\geq1}$ satisfy the assumptions of Theorem
    \ref{thm:5.1}. Hence
    \[
        \htop\left(
        \Phi,
        G_{\mathcal{K}}(\Phi)\cap\Trans(\Phi,\Lambda)\cap U
        \right)\geq H.
    \]
    The reverse inequality follows from Theorem \ref{thm:upper_htop}.
\end{proof}

\subsection{Multifractal application} \label{subsec:5.2}
As an application of Theorems \ref{thm:A} and \ref{thm:B}, we provide an alternative proof of the multifractal results established in \cite{shi2023multifractal}*{Theorem A, Theorem B}.

Throughout this subsection, $\Phi=(\phi_t)_{t\in\mathbb R}$ denotes the flow generated by the vector field $X$. We use the notation $I_g$ and $R_g(a)$ introduced in the Introduction for the irregular set and the level set associated with
$g\in C(\Lambda,\mathbb{R})$, respectively.

\begin{theorem}[\cite{shi2023multifractal}*{Theorem A, Theorem B}]\label{thm:2.1}
    There exists a Baire residual subset $\mathcal{R}^r \subset \mathcal{X}^r(M^3)$,  $(r\in\mathbb{N}_{\geqslant 2})$, and a Baire residual subset $\mathcal{R} \subset \mathcal{X}^1(M)$ so that, if $\Lambda$ is a geometric Lorenz attractor of $X\in\mathcal{R}^r$ or a singular hyperbolic attractor of $X\in\mathcal{R}$, and $g\in C(\Lambda,\mathbb{R})$, then either:
    \begin{enumerate}
        \item $I_g$ is empty and $\int g \, d\mu = \int g \, d\nu$ for all $\mu,\nu\in\M(\Phi,\Lambda)$, or
        \item $I_g$ is a residual subset of $\Lambda$ and $h_{\mathrm{top}}(\Phi,I_g) = h_{\mathrm{top}}(\Phi,\Lambda)$.
    \end{enumerate}
    Moreover, if $I_g$ is non-empty then for any $a\in\mathbb{R}$ satisfying
    \[
        \inf_{\mu\in\M(\Phi,\Lambda)} \int g \, d\mu < a < \sup_{\mu\in\M(\Phi,\Lambda)} \int g \, d\mu,
    \]
    the level set $R_g(a)$ is dense in $\Lambda$ and
    \[
        h_{\mathrm{top}}(\Phi,R_g(a)) = \sup \left\{h_\mu(\phi_{1}) : \int g \, d\mu = a, \mu\in\M(\Phi,\Lambda)\right\}.
    \]
\end{theorem}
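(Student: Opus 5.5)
We take $\mathcal R^r$ and $\mathcal R$ to be the residual sets of Lemmas \ref{lem:lorenz_residual} and \ref{lem:sectional_residual}. On these sets Theorems \ref{thm:A} and \ref{thm:B} hold, together with horseshoe approximation, entropy-expansiveness and density of periodic orbits, and we deduce the statement from these facts. The easy half is the dichotomy. If all invariant measures give the same integral to $g$, then every empirical limit has that integral, so $I_g=\emptyset$. Otherwise pick $\mu_1,\mu_2\in\M(\Phi,\Lambda)$ with $\int g\,d\mu_1\neq\int g\,d\mu_2$.

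\textbf{Residuality of $I_g$.} Let $\mathcal K$ be the segment $\{s\mu_1+(1-s)\mu_2:s\in[0,1]\}$, which is compact and connected. Every point of $G_{\mathcal K}(\Phi)$ has oscillating Birkhoff averages, so $G_{\mathcal K}(\Phi)\cap\Trans(\Phi,\Lambda)\cap U\subseteq I_g$. By the remark after Theorem \ref{thm:B} this set is nonempty for every open $U$ meeting $\Lambda$, so $I_g$ is dense. Moreover
\[
I_g=\bigcup_{\varepsilon\in\mathbb Q^+}\bigcap_{N}\bigcup_{T,T'>N}\Bigl\{x:\Bigl|\tfrac1T\int_0^Tg\circ\phi_t(x)\,dt-\tfrac1{T'}\int_0^{T'}g\circ\phi_t(x)\,dt\Bigr|>\varepsilon\Bigr\}.
\]
For $\varepsilon$ below half the gap $|\int g\,d\mu_1-\int g\,d\mu_2|$, the $G_\delta$ set in this union contains the dense set above, so it is a dense $G_\delta$ and $I_g$ is residual.

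\textbf{Full entropy of $I_g$.} Fix $\eta>0$ and choose $\mu$ with $h_\mu(\phi_1)>\htop(\Phi,\Lambda)-\eta$; this uses the variational principle and Lemma \ref{lem:htop}. Choose $\mu'$ with $\int g\,d\mu'\neq\int g\,d\mu$, and set $\mathcal K=\{s\mu+(1-s)\mu':s\in[0,\theta]\}$ for small $\theta>0$. By affinity of entropy, $\inf_{\mathcal K}h\ge(1-\theta)h_\mu(\phi_1)$. Since $G_{\mathcal K}(\Phi)\subseteq I_g$, Theorem \ref{thm:A}/\ref{thm:B} gives $\htop(\Phi,I_g)\ge(1-\theta)(\htop-\eta)$. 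The reverse inequality is trivial.

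\textbf{Level sets.} Fix $a$ strictly between the infimum and supremum of $\int g\,d\mu$. For the upper bound, note that $R_g(a)\subseteq\{x: V_\Phi(x)\subseteq\{\mu:\int g\,d\mu=a\}\}$, and each such $x$ lies in $QR(S_a)$, where $S_a$ is the supremum on the right-hand side. By Lemma \ref{lem:tau} and Lemma \ref{lem:Bowen}, as in Lemma \ref{lem:saturated-contained-qr}, this gives $\htop(\Phi,R_g(a))\le S_a$.

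For the lower bound, take $\mu$ with $\int g\,d\mu=a$ and $h_\mu(\phi_1)>S_a-\eta$. Then $\mathcal K=\{\mu\}$ gives $G_\mu(\Phi)\subseteq R_g(a)$ and entropy at least $S_a-\eta$. Density of $R_g(a)$ follows from nonemptiness of $G_\mu(\Phi)\cap\Trans(\Phi,\Lambda)\cap U$ for every $U$. The existence of at least one measure with $\int g\,d\mu=a$ follows from convexity of $\M(\Phi,\Lambda)$.

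\textbf{Main obstacle.} The delicate step is the residuality of $I_g$: the dense subset we produce must sit inside a single $G_\delta$ piece of the decomposition above. Choosing $\varepsilon$ uniformly below half the gap $|\int g\,d\mu_1-\int g\,d\mu_2|$ handles this.
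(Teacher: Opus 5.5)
Your argument is correct in substance and is essentially the paper's proof. You feed segments of invariant measures into Theorems \ref{thm:A} and \ref{thm:B}, together with the nonemptiness remark, to get density, residuality and full entropy of $I_g$. For the upper estimate on $R_g(a)$ you use Bowen's bound on $QR(\cdot)$ via Lemma \ref{lem:tau}, and for the lower estimate you use $G_\mu(\Phi)\subseteq R_g(a)$.

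The differences are cosmetic, and both versions work:
\begin{itemize}
\item You split the dichotomy according to whether $\mu\mapsto\int g\,d\mu$ is constant on $\M(\Phi,\Lambda)$, so case (1) needs no saturated-set input. The paper instead argues by contradiction from $I_g=\emptyset$.
\item You obtain residuality from a single Cauchy-type $G_\delta$ set. The paper uses the sets $\mathcal{R}_n$.
\end{itemize}

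There is one slip to fix, in the full-entropy step. As written, $\mathcal K=\{s\mu+(1-s)\mu':s\in[0,\theta]\}$ is the short segment at the $\mu'$ end. Its entropy infimum can therefore be as small as $h_{\mu'}(\phi_1)$, and the claimed bound $(1-\theta)h_\mu(\phi_1)$ fails. You need the short segment at the high-entropy end, $\mathcal K=\{(1-s)\mu+s\mu':s\in[0,\theta]\}$. This is exactly the paper's choice $\mathcal K=[\eta,\nu]$, where $\nu$ is a small perturbation of the high-entropy measure $\eta$ toward $\mu_0$. With that correction the endpoints still have different $g$-integrals, so $G_{\mathcal K}(\Phi)\subseteq I_g$, and the rest of your argument goes through unchanged.
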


\begin{proof}
    We only prove the case of geometric Lorenz attractors; the case of singular hyperbolic attractors is similar. We take $\mathcal{R}^{r}$ as in Theorem \ref{thm:A} and assume $\Lambda$ is a  geometric Lorenz attractor of $X \in \mathcal{R}^{r}$.

    Suppose first that $I_g$ is empty. Suppose by contradiction, that there exist $\mu,\nu \in \M(\Phi,\Lambda)$ such that
    $$ \int g \, d\mu > \int g \, d\nu.  $$
    We construct $\mathcal{K}=[\mu,\nu]=\{  \lambda\mu+(1-\lambda)\nu:\lambda\in [0,1] \}$, which is a compact connected subset. By Theorem \ref{thm:A}, $G_{\mathcal{K}}(\Phi)$ is  nonempty. Take $x \in G_{\mathcal{K}}(\Phi)$. since
    $$  \frac{1}{t}\int_{0}^{t}g(\phi_{s}(x))ds=\int g d(\frac{1}{t}\int_{0}^{t}\delta_{\phi_{s}(x)}ds),  $$
    we have
    $$  \liminf_{t \rightarrow \infty}\frac{1}{t}\int_{0}^{t}g(\phi_{s}(x))ds \leq \int g \, d\nu<\int g \, d\mu \leq \limsup_{t \rightarrow \infty}\frac{1}{t}\int_{0}^{t}g(\phi_{s}(x))ds.$$
    This implies that $\lim\limits_{T\rightarrow \infty}\frac{1}{T}\int_{0}^{T}g(\phi_{t}(x)) dt$ does not exist. Hence, $x \in I_{g}$, which contradicts the assumption that $I_{g}$ is empty.

    Suppose now that $I_g$ is nonempty. We first prove that $h_{\mathrm{top}}(\Phi,I_g) = h_{\mathrm{top}}(\Phi,\Lambda)$. For $x \in I_{g}$, there exist $\mu_{0},\mu_{1}\in V_{\Phi}(x)$ such that
    $$ \int g \, d \mu_{0} \neq \int g \, d\mu_{1} .$$
    Fix $\varepsilon>0$. By the variational principle, there is $\eta \in \mathcal{M}(\Phi,\Lambda)$, such that
    $$ h_{\eta}(\phi_{1})>h_{\mathrm{top}}(\Phi,\Lambda)-\varepsilon.  $$
    Then there is $\mu_{0}$ or $\mu_{1}$, denoted by $\mu_{0}$ without loss of generality, such that
    $$\int g \, d \eta \neq \int g \, d \mu_{0}  $$
    Let $\nu=(1-a)\eta+a \mu_{0}$, where $a$ is small enough such that
    $$ h_{\nu}(\phi_{1})>h_{\mathrm{top}}(\Phi,\Lambda)-2\varepsilon.  $$
    Let
    $ \mathcal{K}=[\eta,\nu]=\{ (1-\lambda)\eta+\lambda\nu :0 \leq \lambda \leq 1 \} $, which is a compact connected subset. Since $\mu \mapsto \int g d\mu$ is not constant on $\mathcal{K}$, by an argument similar to the previous one, we can obtain  $G_{\mathcal{K}}(\Phi) \subset I_{g}$. By Theorem \ref{thm:A} and the affine property of entropy map,
    $$  h_{\mathrm{top}}(\Phi,I_{g}) \geq h_{\mathrm{top}}(\Phi,G_{\mathcal{K}}(\Phi)\cap \mathrm{Trans}(\Phi,\Lambda))=\inf_{\mu \in \mathcal{K}} h_{\mu}(\phi_{1}) \geq h_{\mathrm{top}}(\Phi,\Lambda)-2\varepsilon. $$
    Letting $\varepsilon \rightarrow 0$, we get $h_{\mathrm{top}}(\Phi,I_{g}) \geq h_{\mathrm{top}}(\Phi,\Lambda)$. On the other hand, $I_{g} \subset \Lambda$ implies $h_{\mathrm{top}}(\Phi,I_{g}) \leq h_{\mathrm{top}}(\Phi,\Lambda)$, which in turn implies
    $$   h_{\mathrm{top}}(\Phi,I_{g}) = h_{\mathrm{top}}(\Phi,\Lambda)$$

   Now we prove that $I_{g}$ is a residual set of $\Lambda$. Take $\{B(\mu_{0},\frac{1}{n}),n \in \mathbb{N}^{+}\}$ and $\{B(\mu_{1},\frac{1}{n}),n \in \mathbb{N}^{+}\}$ as two sequences of weak star neighborhoods that shrink to $\mu_{0},\mu_{1}$, respectively. When $n$ is large enough, it satisfies
    $$  \int g \, d \eta_{0} \neq \int g \, d \eta_{1}, \ for \ \eta_{0} \in B(\mu_{0},\frac{1}{n}),\  \eta_{1}\in B(\mu_{1},\frac{1}{n}). $$
    We define
    \begin{align*}
        \mathcal{R}_{n} 
        & =\{ x \in \Lambda:\mathcal{E}_{T}(x,\Phi) \in B(\mu_{0},\frac{1}{n}) \ \text{for arbitrarily large} \ T, \ \mathcal{E}_{T}(x,\Phi) \in B(\mu_{1},\frac{1}{n}) \ \text{for arbitrarily large} \ T \} \\
        & =\bigcap_{m=1}^{\infty}(\{ x:\exists T \geq m, \mathcal{E}_{T}(x,\Phi) \in B(\mu_{0},\frac{1}{n}) \} \cap \{ x:\exists T \geq m, \mathcal{E}_{T}(x,\Phi) \in B(\mu_{1},\frac{1}{n})   \}).
    \end{align*}
    It is clear that $\mathcal{R}_{n}$ is a $G_{\delta}$ set. Consider the compact connected set $\mathcal{K}=[\mu_{0},\mu_{1}] \subset \M(\Phi,\Lambda)$. By Theorem \ref{thm:A}, for any non-empty open set $U \subset \Lambda$,
    $$ G_{\mathcal{K}}(\Phi) \cap \mathrm{Trans}(\Phi,\Lambda) \cap U \neq \emptyset.  $$
    Since $G_{\mathcal{K}}(\Phi) \subset \mathcal{R}_{n}$,  $\mathcal{R}_{n}$ is dense in $\Lambda$, and consequently $\mathcal{R}_{n}$ is residual. We have
    $$  \mathcal{R}_{\mathrm{irr}}=\bigcap_{n=1}^{\infty} \mathcal{R}_{n}  $$
    is residual. For any $x \in \mathcal{R}_{\mathrm{irr}}$, its empirical measures along different subsequences approach measures with different g-integrals. Thus $\mathcal{R}_{\mathrm{irr}} \subset I_{g}$. Therefore, $I_{g}$ is residual.

    Finally, we consider the entropy of level sets.  Define
    \[  \mathcal{M}_{a}=\{  \mu \in \mathcal{M}(\Phi,\Lambda):\int g \, d\mu=a \}.  \]
    When $\inf_{\mu \in \M(\Phi,\Lambda)} \int g \, d\mu < a < \sup_{\mu\in \M(\Phi,\Lambda)} \int g \, d\mu$, $\mathcal{M}_{a}$ is non-empty.
    Take any $\mu \in \mathcal{M}_{a}$. Then $G_{\mu}(\Phi) \subset R_{g}(a)$.
    Applying Theorem \ref{thm:A} gives
    $$ h_{\mathrm{top}}(\Phi,R_{g}(a)) \geq  h_{\mathrm{top}}(\Phi,G_{\mu}(\Phi) \cap \mathrm{Trans}(\Phi,\Lambda))=h_{\mu}(\phi_{1}).  $$
    Taking the supremum over $\mu \in \mathcal{M}_{a}$ yields
    $$  h_{\mathrm{top}}(\Phi,R_{g}(a)) \geq \sup_{\mu \in \mathcal{M}_{a}}h_{\mu}(\phi_{1}).$$
    
    On the other hand, if $x \in R_{g}(a)$, we have $\int g \, d\nu=a$ for every $\nu \in V_{\Phi}(x)$ and hence $V_{\Phi}(x) \subset \mathcal{M}_{a}$. For every $\nu \in V_{\phi_{1}}(x)$, Lemma~\ref{lem:tau} gives $\tau \nu \in \mathcal{M}_{a}$ and 
    $$ h_{\nu}(\phi_{1})=h_{\tau \nu}(\phi_{1}) \leq \sup_{\mu \in \mathcal{M}_{a}} h_{\mu}(\phi_{1}).  $$
    Hence
    $$ R_{g}(a) \subseteq QR\left(\sup_{\mu \in \mathcal{M}_{a}}h_{\mu}(\phi_{1})\right).  $$
    The upper bound 
    $$  h_{\mathrm{top}}(\Phi,R_{g}(a)) \leq \sup_{\mu \in \mathcal{M}_{a}}h_{\mu}(\phi_{1})  $$
    follows from Lemmas~\ref{lem:Bowen} and \ref{lem:htop}.
    Therefore, $  h_{\mathrm{top}}(\Phi,R_{g}(a)) = \sup_{\mu \in \mathcal{M}_{a}}h_{\mu}(\phi_{1}).  $
    The density of the level sets follows directly from the local version of Theorem \ref{thm:A}.
\end{proof}

\section{Proofs of Theorem \ref{thm:C} and Theorem \ref{thm:D}}
We now prove Theorems~\ref{thm:C} and~\ref{thm:D}.
The common mechanism is a local mass inequality near a singularity.

\begin{lemma}\label{lem:star}
    Let $X\in\cX^1(M)$, let $\Phi=(\phi_t)_{t\in\Rb}$ be the flow generated by $X$, and let $\Lambda\subset M$ be a compact $\Phi$-invariant set. Assume that $\sigma\in\Sing(X)$ is hyperbolic with $\dim E^u(\sigma)=1$, and that $p\in\Lambda$ is a hyperbolic periodic point of $X$ such that  $W^u(\sigma) \backslash \left\{\sigma\right\} \subset W^s\left(\Orb(p)\right) $. Then there exists
    \begin{itemize}
        \item an open neighborhood $U^*$ of $\sigma$ and a closed tubular neighborhood $V^*$ of $\Orb(p)$ with $\overline{U^*}\cap V^* =\emptyset$;
        \item a constant $L=L(X,\sigma,p)>0$,
    \end{itemize}
    such that for any $x \in M$, either $V_{\Phi}(x) = \left\{\delta_\sigma\right\}$ or
    \begin{equation}\label{eq:star}
        \mu(V^*) \geq L \mu(U^*)
    \end{equation}
    for some $\mu \in V_\Phi(x)$.
\end{lemma}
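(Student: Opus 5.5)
The plan is to exploit the local geometry near $\sigma$: every orbit that spends a long time in a small neighborhood of $\sigma$ but does not converge to $\sigma$ must leave along $\Wuloc(\sigma)$. Since $W^u(\sigma)\setminus\{\sigma\}\subset W^s(\Orb(p))$, it is then carried close to $\Orb(p)$ and stays in $V^*$ for a time comparable to the time it spent near $\sigma$.

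\textbf{Setup.} Since $\dim E^u(\sigma)=1$, the set $W^u(\sigma)\setminus\{\sigma\}$ consists of two branches. Fix a small compact fundamental domain $\Delta\subset \Wuloc(\sigma)\setminus\{\sigma\}$ meeting each branch; it is contained in $W^s(\Orb(p))$. Choose $V^*$, a small closed tubular neighborhood of $\Orb(p)$ disjoint from $\sigma$. By compactness of $\Delta$ there is $\tau_0>0$ with $\phi_{\tau_0}(\Delta)$ in the interior of $V^*$. Moreover, since $\Orb(p)$ is hyperbolic, for points of $W^s(\Orb(p))$ near $\Orb(p)$ the forward orbit remains in $V^*$ forever.

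\textbf{Key estimate.} The main step is the following. If a point enters a small neighborhood $U^*$ of $\sigma$ and stays there for time $s$, then on leaving it passes within $\rho(s)$ of $\Delta$, where $\rho(s)\to 0$. This follows from Hartman--Grobman, or the $\lambda$-lemma, at the hyperbolic singularity $\sigma$: a long sojourn forces the point to be close to $W^s_{\mathrm{loc}}(\sigma)$ on entry and close to $\Wuloc(\sigma)$ on exit. By continuous dependence and the $\lambda$-lemma at $\Orb(p)$, the orbit then stays in $V^*$ for a time $\ge c\,s$. Here $c>0$ is uniform, and I would derive it from the ratio of the contraction exponent of $\sigma$ to the weakest contraction or expansion rates at $\Orb(p)$. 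Concretely, closeness $e^{-\lambda s}$ to $W^s(\Orb(p))$ buys a tracking time of about $\lambda s/\lambda_u(p)$ near $\Orb(p)$. Sojourns of bounded length $s\le s_0$ are handled separately: between two visits to $U^*$ the orbit must travel a definite distance, because $\overline{U^*}$ is small and the flow has speed bounded below outside $U^*$. So bounded sojourns contribute at most a fixed proportion, and they too force some time in $V^*$ after passing near $\Delta$. Alternatively, I would shrink $U^*$ so that only long sojourns matter, and absorb the rest into the constant $L$. This yields $L>0$ with
\[\mathrm{Leb}\{t\in[0,T]:\phi_t x\in V^*\}\ \ge\ L\,\mathrm{Leb}\{t\in[0,T]:\phi_t x\in U^*\}-C\]
for all $T$, up to the final unfinished sojourn.

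\textbf{Dichotomy.} Fix $x$. If $\phi_t x\to\sigma$, then $V_\Phi(x)=\{\delta_\sigma\}$. Otherwise the orbit leaves $U^*$ infinitely often, or never stays in $U^*$ eventually. Choose times $T_k\to\infty$ at which $\phi_{T_k}x\notin U^*$, so that no unfinished sojourn remains; more precisely, choose $T_k$ at exits, where the tracking near $\Orb(p)$ is still counted. This is the delicate point, and I would take $T_k$ at the end of the $V^*$-tracking phase after each exit. Dividing the displayed inequality by $T_k$ and passing to a weak* limit $\mu$ of $\cE_{T_k}(x,\Phi)$ gives, by the portmanteau theorem ($V^*$ closed, $U^*$ open), $\mu(V^*)\ge\limsup \cE_{T_k}(V^*)\ge L\liminf\cE_{T_k}(U^*)\ge L\mu(U^*)$. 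If the orbit eventually never enters $U^*$, the bound is trivial.

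\textbf{Main obstacle.} I expect the main obstacle to be the uniform linear comparison between sojourn time near $\sigma$ and tracking time in $V^*$, which is the $\lambda$-lemma estimate, together with selecting the times $T_k$ correctly.
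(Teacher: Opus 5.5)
Your proposal is correct and is essentially the paper's argument. The paper works in adapted coordinates at $\sigma$ and bounds each passage time near $\sigma$ above by $|\log\|x_w\||/|u_1|$, where $x_w$ is the exit point's coordinate transverse to $\Wuloc(\sigma)$. It bounds the subsequent stay in $V^*$ below by roughly $|\log\|x_w\||\,T_0/\log\kappa$, where $\kappa$ bounds the expansion per return at $\Orb(p)$; so it is the strongest rate there, not the weakest, that enters $L$. It then evaluates empirical measures at the ends $T_4^m$ of the $V^*$-sojourns and applies the portmanteau theorem exactly as you do.

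One simplification is worth noting. The paper takes $U^*=\Int U(\sigma,\delta_1)$ tiny inside a fixed linearizing box, so every visit to $U^*$, however short, exits that box within $\delta_1$ of $\Wuloc(\sigma)$. With both times measured through $\|x_w\|$, your separate treatment of short sojourns and additive constants becomes unnecessary.
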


We follow the construction of \cite{shi2025connecting}*{Section~4.3} and merely indicate how the inequality \eqref{eq:star} arises before invoking any limit measure.

\begin{proof}
    By the Hartman--Grobman/Shilnikov adapted-coordinate construction
    \cite{shilnikov1998methods}*{Theorems~2.4 and~2.5}, choose a $C^1$ chart on a neighborhood
    \[U(\sigma):=\left\{(x,y) = \left(x_{1},\dots,x_{d-1},y\right) \in \Rb^{d-1}\times \Rb:\left\lVert x\right\rVert \leq 1, \left\lVert y\right\rVert \leq 1 \right\}\]
    of $\sigma$ such that $\sigma=(0,0)\in\Rb^{d-1}\times\Rb$, $\Wsloc(\sigma)=\{y=0\}$ and $\Wuloc(\sigma)=\{x=0\}$ as \cite{shi2025connecting}*{Claim 4.7}. Write $u_1<0$ for the stable contraction exponent in the $x$-direction in these adapted coordinates.
    For any $0 < \delta < 1$, denote
    \[\Sigma^{\pm} = \left\{(x,\pm 1) \in U(\sigma):\left\lVert x\right\rVert \leq 1 \right\},\]
    \[\Sigma^{\pm}(\delta) = \left\{(x,\pm 1) \in \Sigma^{\pm}:\left\lVert x\right\rVert \leq \delta \right\},\]
    and $U(\sigma,\delta) = \left\{(x,y) \in U(\sigma): \left\lVert x\right\rVert \leq \delta, \left\lvert y\right\rvert \leq \delta \right\}$. For any $z = (x_z,y_z) \in U(\sigma,\delta)$ with $x_z \ne 0$, $y_z > 0$, there exist $T_1 < 0 < T_2$ such that $\phi_{T_1}(z) \in \partial U(\sigma) \setminus \Sigma^{\pm}$, $\phi_{T_2}(z) \in \Sigma^{+}$, and $\phi_t(z) \in \Int U(\sigma)$ for any $t \in (T_1,T_2)$ (see \cite{shi2025connecting}*{Claim 4.8}). Choose a $C^1$ cross-section
    \[\Sigma_p :=\left\{(r,s) \in \Rb^k \times \Rb^{d-1-k}: \left\lVert r\right\rVert \leq 1, \left\lVert s\right\rVert \leq 1 \right\} \]
    at $p$ and two constants $\kappa > 1$, $T_0 > 0$ such that the following statements hold (see \cite{shi2025connecting}*{Claim 4.9}).
    \begin{enumerate}
        \item The Poincar\'e return map
              \[R:\Sigma_p' := \left\{(r,s) \in \Sigma_p:\left\lVert s\right\rVert \leq \kappa^{-1}\right\} \to \Sigma_p \]
              is well-defined.
        \item For any $z \in \Sigma_p'$, let $T_z > 0$ be the first return time of $z$ to $\Sigma_p$. Then $T_z \geq T_0$ and
              \[\left(\bigcup_{z \in \Sigma_p'} \phi_{\left[0,T_z\right]}(z)\right) \cap U(\sigma) = \emptyset.\]
    \end{enumerate}
    Let $V^* = \bigcup_{z \in \Sigma_p'} \phi_{\left[0,T_z\right]}(z)$ which is a closed neighborhood of $\Orb(p)$. There exists $0 < \delta_0 < 1$ such that the Poincar\'e maps $P^\pm:\Sigma^\pm(\delta_0) \to \Sigma_p$ are well-defined. For any $z = (x_z,y_z) \in U(\sigma,\delta_0)$ with $x_z \ne 0, y_z > 0$, let $T_1 < 0 < T_2$ be defined as above and $w = (x_w, 1) = \phi_{T_2}(z) \in \Sigma^+(\delta_0)$. Then for the point $P^+(w)=(r_w,s_w) \in \Sigma_p'$, by the estimates in \cite{shi2025connecting}*{Claim 4.10}, there exists a constant $K>0$, independent of $w$, such that
    \[
        \|s_w\|\leq K\|x_w\|.
    \]
    After decreasing $\delta_0$ if necessary, assume that $K\|x_w\|<1$. The orbit of $P^+(w)$ remains inside $V^*$ for at least
    \[
        T_p\geq\Bigl(\tfrac{-\log(K\|x_w\|)}{\log\kappa}-2\Bigr)T_0
    \]
    units of time (see \cite{shi2025connecting}*{Claim 4.10}), while it spent at most
    \[
        T_2-T_1\leq\frac{\log\|x_w\|}{u_1}=\frac{|\log\|x_w\||}{|u_1|}
    \]
    units of time inside $U(\sigma,\delta_0)$. The same conclusion holds for $z = (x_z,y_z) \in U(\sigma,\delta_0)$ with $x_z \ne 0$ and $y_z < 0$. Choose $\delta_1\in(0,\delta_0]$ small enough such that
    \begin{equation}\label{eq:tratio}
        T_p\geq L\cdot(T_2-T_1),\qquad L:=\frac{-u_1 T_0}{2\log\kappa}>0.
    \end{equation}
    Let 
    $$U^* = \operatorname{Int}U(\sigma,\delta_1)=\{ (x,y) \in U(\sigma):\Vert x \Vert < \delta_{1},\Vert y \Vert <\delta_1  \}.  $$ 
    Then for any $z = (x_z,y_z) \in U^* \setminus \left\{\Wsloc(\sigma) \cup \Wuloc(\sigma)\right\}$, its orbit spends at most $T_2-T_1$ units of time inside $U^*$, wanders outside $U^*$ and enters $V^*$, and spends at least $T_p$ units of time inside $V^*$ until exiting from $V^*$ with $T_p\geq L\cdot(T_2-T_1)$.

    If $x \in \Wsloc(\sigma)$, then $x$ converges to $\sigma$ and hence $V_\Phi(x) = \left\{\delta_\sigma\right\}$. 
    If $x \notin \Wsloc(\sigma)$ and there exists $T>0$ such that
    $\phi_t(x) \notin U^*$ for all $t\geq T$, then, for any $\mu\in V_\Phi(x)$, choose $t_n\to+\infty$ such that $\mathcal E_{t_n}(x)\to\mu$. Since
    \[
    \mathcal E_{t_n}(x)(U^*)\leq \frac{T}{t_n}\longrightarrow 0,
    \]
    and $U^*$ is open, the Portmanteau theorem gives
    \[
    \mu(U^*)\leq\liminf_{n\to\infty}
    \mathcal E_{t_n}(x)(U^*)=0.
    \]
    Consequently, $\mu(U^*)=0$ and hence \eqref{eq:star} holds.  If $x \notin \Wsloc(\sigma)$ and $\phi_t(x) \in U^*$ for any $t \geq T$ for some $T$ large enough, then $\phi_T(x) \in W^s_{\mathrm{loc}}(\sigma)$ by \cite{shi2025connecting}*{Claim 4.7(2)}. Hence $x$ belongs to the global stable set of $\sigma$, and consequently $V_\Phi(x)=\{\delta_\sigma\}$. If $x \notin \Wsloc(\sigma)$ and $x$ enters $U^*$ and leaves from $U^*$ for infinitely many times, then decompose the forward orbit of $x$ into
    disjoint time intervals
    \[
        0\leq T_1^1<T_2^1<T_3^1<T_4^1<T_1^2<T_2^2<\cdots
    \]
    where $[T_1^m,T_2^m]$ is the $m$-th maximal sojourn of $\phi_t(x)$ in $U^*$ and
    $[T_3^m,T_4^m]$ is the first maximal sojourn in $V^*$ following it. By
    \eqref{eq:tratio},
    \[
        T_4^m-T_3^m\geq L\cdot(T_2^m-T_1^m)
    \]
    for any $m \in \Nb^+$. Let $\mu \in V_\Phi(x)$ be an accumulation point of $\left\{\cE_{T^m_4}(x):m \in \Nb^+\right\}$ and assume $\mu = \lim_{l \to \infty} \cE_{T^{m_l}_4}(x)$. Then
    \[
        \mu(U^*)\leq\liminf_{l\to\infty}\cE_{T_4^{m_l}}(x)(U^*)\leq\liminf_{l\to\infty}\frac{1}{T^{m_l}_4}\sum_{i=1}^{m_l}(T_2^i-T_1^i),
    \]
    \[
        \mu(V^*)\geq\limsup_{l\to\infty}\cE_{T_4^{m_l}}(x)(V^*)\geq\limsup_{l\to\infty}\frac{1}{T^{m_l}_4}\sum_{i=1}^{m_l}(T_4^i-T_3^i) \geq L \mu(U^*).
    \]
\end{proof}

\begin{proof}[Proof of Theorem \ref{thm:C} and Theorem \ref{thm:D}]
    By \cite{shi2025connecting}*{Proof of Theorem B}, there exists a dense subset $\mathcal{D}^r\subset\cX^r(M^3)$ such that, if $\Lambda$ is a non-trivial geometric Lorenz attractor of $X \in \mathcal{D}^r$, then the hypotheses of Lemma \ref{lem:star} hold.
    
    Fix such an $X$ and $\Lambda$. Since a geometric Lorenz attractor has positive topological entropy, the classical variational principle yields a measure $\mu \in \M(\Phi,\Lambda)$ with $h_{\mu}(\phi_{1})>0$. In particular, $\mu \ne \delta_\sigma$. 
    
    Let $U^*$,$V^*$, and $L$ be given by Lemma~\ref{lem:star}. Choose $\theta \in (0,1)$, such that, for 
    $$\mu_\theta:= \theta   \delta_\sigma + (1-\theta)\mu \neq \delta_{\sigma},$$
    one has
    $$\mu_\theta\left(U^*\right) > \frac{1}{1+L} \quad \text{and} \quad h_{\mu_\theta}(\phi_{1}) = (1-\theta)h_\mu(\phi_{1}) > 0 .$$
    
    We claim that $G_{\mu_\theta}(\Phi)=\emptyset$. Otherwise, take
    $x\in G_{\mu_\theta}(\Phi)$. Since
    $V_\Phi(x)=\{\mu_\theta\}\neq\{\delta_\sigma\}$, Lemma~\ref{lem:star}
    gives
    \[
        \mu_\theta(V^*)\geq L\mu_\theta(U^*)
        >\frac{L}{1+L}.
    \]
    As $U^*\cap V^*=\emptyset$, this implies
    \[
        1=\mu_\theta(M)
        \geq \mu_\theta(U^*)+\mu_\theta(V^*)
        >1,
    \]
    a contradiction. This proves Theorem~\ref{thm:C}.

   An analogous conclusion for singular hyperbolic attractors follows from the dense-part proof of \cite{shi2025connecting}*{Theorem B.1}(see also \cite{shi2025connecting}*{Remark 4.6}): there is a dense subset $\mathcal{D}\subset\cX^1(M)$ such that, if $\Lambda$ is a non-trivial singular hyperbolic attractor of $X \in \mathcal{D}$ satisfying $\Sing(\Lambda)\neq\emptyset$ and $\dim(E^{cu})=2$, then the hypothesis of Lemma \ref{lem:star} holds. Using an argument similar to the previous one, we can prove Theorem \ref{thm:D}.
\end{proof}

\appendix
\section{A suspension-flow example}
\label{app:suspension-example}
The purpose of this appendix is to show that the inclusion
\[
    \phi_{[0,1]}G_{\mathcal K}(\phi_1)
    \subseteq G_{\tau\mathcal K}(\Phi)
\]
in Lemma~\ref{lem:tau}(\ref{item:4.1.8}) can be strict. We demonstrate this using a constant-roof suspension flow. 

Let $(\Sigma,\sigma)$ be the two-sided full shift on two symbols,
and let $\Phi=(\varphi_t)_{t\in\mathbb{R}}$ be the suspension flow
with constant roof function $\rho\equiv1$. The suspension space is
\[
\Sigma^\rho=\Sigma\times[0,1]/\sim,
\]
where $\sim$ is the equivalence relation generated by
$(y,1)\sim(\sigma(y),0)$ for every $y\in\Sigma$.
The time-one map acts by
\[
\varphi_1(y,s)=(\sigma(y),s).
\]
We will exhibit a nonempty compact connected set
$\mathcal{K}\subseteq\mathcal{M}(\varphi_1,\Sigma^\rho)$
for which
\[
\varphi_{[0,1]}G_{\mathcal{K}}(\varphi_1)
\subsetneq G_{\tau\mathcal{K}}(\Phi).
\]
Although Lemma~\ref{lem:tau}(\ref{item:4.1.7}) gives the exact relation
\[
\tau\bigl(V_{\varphi_1}(x)\bigr)=V_\Phi(x)
\]
between the accumulation sets of discrete-time and continuous-time
empirical measures, it does not yield a corresponding equality
between their saturated sets. The example relies on the following
distinction: the time-one map preserves the fibre coordinate,
whereas the flow moves continuously through this coordinate.

\begin{example}[Strictness in Lemma \ref{lem:tau}(\ref{item:4.1.8})]
    Take a $\sigma$-ergodic measure $\mu \in\mathcal{M}_{\mathrm{erg}}(\sigma,\Sigma)$
    and set
    \[
        \mathcal{K} = \left\{\mu \otimes\mathrm{Leb}\right\},
    \]
    where $\mathrm{Leb}$ denotes the normalized Lebesgue measure on $[0,1)$. Then $\mathcal{K}$ is a nonempty compact connected subset of both $\mathcal{M}(\phi_{1},\Sigma_\rho)$ and $\mathcal{M}(\Phi,\Sigma_\rho)$.

    \medskip
    \noindent\textbf{Claim.}
    $G_{\tau \mathcal{K}}(\Phi)\neq\emptyset$, yet $\phi_{[0,1]}G_{\mathcal{K}}(\phi_1)=\emptyset$.
    
    \begin{proof}
        Since $\rho\equiv 1$, it is easy to check that
        \[
            V_{\phi_1}((y,s))
            = \left\{\nu\otimes\delta_s : \nu\in V_\sigma(y)\right\}
            = V_\sigma(y)\times\{\delta_s\}.
        \]

        The element in $\mathcal{K}$ has marginal $\mathrm{Leb}$ on the fibre coordinate, while $V_{\phi_1}((y,s))$ has marginal $\delta_s\neq\mathrm{Leb}$ for any $s\in[0,1)$.
        It follows that $V_{\phi_1}((y,s))\neq\mathcal K$ for every
        $(y,s)\in\Sigma_{\rho}$, and consequently
        \[
        G_{\mathcal K}(\phi_1)=\emptyset
        \quad\text{and}\quad
        \phi_{[0,1]}G_{\mathcal K}(\phi_1)=\emptyset .
        \]

        For the flow, one computes
        \[
            \cE_t((y,s),\Phi)
            = \frac{1}{t}\int_0^t \delta_{\phi_u((y,s))}\,du.
        \]
        As $t\to\infty$, the fibre marginal converges to $\mathrm{Leb}$, and
        the base marginal converges along the same subsequences as $\cE_{\lfloor t\rfloor}(y,\sigma)$. More precisely,
        \[
            V_\Phi(y,s)
            = \bigl\{\nu\otimes\mathrm{Leb} : \nu\in V_\sigma(y)\bigr\}
            = \tau\bigl(V_{\phi_1}(y,s)\bigr).
        \]
        Then
        \[
            G_{\mathcal{K}}(\Phi)
            = \{(y,s)\in\Sigma_\rho : V_\sigma(y)=\left\{\mu\right\} \}
            = G_{\mu}(\sigma)\times[0,1).
        \]
        By Birkhoff Ergodic Theorem, we have $G_{\mu}(\sigma)\neq\emptyset$. Therefore, $G_{\mathcal{K}}(\Phi)\neq\emptyset$. Note that $\mu \otimes\mathrm{Leb}$ is $\Phi$-invariant and satisfies $\tau(\mu \otimes\mathrm{Leb})=\mu \otimes\mathrm{Leb}$, so $\tau \mathcal{K}=\mathcal{K}$.
        We have shown
        \[
            \phi_{[0,1]}G_{\mathcal{K}}(\phi_1)
            = \emptyset \subsetneq G_{\mathcal{K}}(\Phi)
            = G_{\tau \mathcal{K}}(\Phi),
        \]
        which demonstrates that the inclusion in Lemma \ref{lem:tau}(\ref{item:4.1.8}) can be strict.
    \end{proof}
\end{example}

\subsection*{Acknowledgements}
We would like to thank Y. Shi for useful suggestions
and comments.

\bibliography{ref}

@book {shilnikov1998methods,
    AUTHOR = {Shilnikov, Leonid P. and Shilnikov, Andrey L. and Turaev,
              Dmitry V. and Chua, Leon O.},
     TITLE = {Methods of qualitative theory in nonlinear dynamics. {P}art
              {I}},
    SERIES = {World Scientific Series on Nonlinear Science. Series A:
              Monographs and Treatises},
    VOLUME = {4},
      NOTE = {With the collaboration of Sergey Gonchenko (Sections 3.7 and
              3.8), Oleg Sten\cprime kin (Section 3.9 and Appendix A) and
              Mikhail Shashkov (Sections 6.1 and 6.2)},
 PUBLISHER = {World Scientific Publishing Co., Inc., River Edge, NJ},
      YEAR = {1998},
     PAGES = {xxiv+392},
      ISBN = {981-02-3382-5},
   MRCLASS = {37-02 (34C23 34C28 34D45 37D45 37Gxx)},
  MRNUMBER = {1691840},
MRREVIEWER = {Stanis\l aw\ S\c edziwy},
       DOI = {10.1142/9789812798596},
       URL = {https://doi.org/10.1142/9789812798596},
}

@article {arroyo2007dynamical,
    AUTHOR = {Arroyo, Aubin and Pujals, Enrique R.},
     TITLE = {Dynamical properties of singular-hyperbolic attractors},
   JOURNAL = {Discrete Contin. Dyn. Syst.},
  FJOURNAL = {Discrete and Continuous Dynamical Systems},
    VOLUME = {19},
      YEAR = {2007},
    NUMBER = {1},
     PAGES = {67--87},
      ISSN = {1078-0947,1553-5231},
   MRCLASS = {37D30 (37C10 37C70 37D10)},
  MRNUMBER = {2318274},
MRREVIEWER = {Joan\ Carles\ Tatjer},
       DOI = {10.3934/dcds.2007.19.67},
       URL = {https://doi.org/10.3934/dcds.2007.19.67},
}

@book{hirsch1977invariant,
    AUTHOR = {Hirsch, M. W. and Pugh, C. C. and Shub, M.},
     TITLE = {Invariant manifolds},
    SERIES = {Lecture Notes in Mathematics},
    VOLUME = {583},
 PUBLISHER = {Springer-Verlag, Berlin-New York},
      YEAR = {1977},
     PAGES = {ii+149},
   MRCLASS = {58F15 (58F10)},
  MRNUMBER = {501173},
MRREVIEWER = {M.\ C.\ Irwin},
}

@article {crovisier2021robust,
    AUTHOR = {Crovisier, Sylvain and Yang, Dawei},
     TITLE = {Robust transitivity of singular hyperbolic attractors},
   JOURNAL = {Math. Z.},
  FJOURNAL = {Mathematische Zeitschrift},
    VOLUME = {298},
      YEAR = {2021},
    NUMBER = {1-2},
     PAGES = {469--488},
      ISSN = {0025-5874,1432-1823},
   MRCLASS = {37C70 (37C20 37D30)},
  MRNUMBER = {4257095},
MRREVIEWER = {C.\ M.\ Carballo},
       DOI = {10.1007/s00209-020-02618-1},
       URL = {https://doi.org/10.1007/s00209-020-02618-1},
}

@article{bowen1972entropy,
  author     = {Bowen, Rufus},
  title      = {Entropy-expansive maps},
  journal    = {Trans. Amer. Math. Soc.},
  fjournal   = {Transactions of the American Mathematical Society},
  volume     = {164},
  year       = {1972},
  pages      = {323--331},
  issn       = {0002-9947,1088-6850},
  mrclass    = {28.70 (54.00)},
  mrnumber   = {285689},
  mrreviewer = {Karl\ Sigmund},
  doi        = {10.2307/1995978},
  url        = {https://doi.org/10.2307/1995978}
}

@article{bowen1972expansive,
  author     = {Bowen, Rufus and Walters, Peter},
  title      = {Expansive one-parameter flows},
  journal    = {J. Differential Equations},
  fjournal   = {Journal of Differential Equations},
  volume     = {12},
  year       = {1972},
  pages      = {180--193},
  issn       = {0022-0396,1090-2732},
  mrclass    = {54H20},
  mrnumber   = {341451},
  mrreviewer = {L.\ King},
  doi        = {10.1016/0022-0396(72)90013-7},
  url        = {https://doi.org/10.1016/0022-0396(72)90013-7}
}

@article{bowen1973topological,
  author     = {Bowen, Rufus},
  title      = {Topological entropy for noncompact sets},
  journal    = {Trans. Amer. Math. Soc.},
  fjournal   = {Transactions of the American Mathematical Society},
  volume     = {184},
  year       = {1973},
  pages      = {125--136},
  issn       = {0002-9947,1088-6850},
  mrclass    = {28A65 (54H20)},
  mrnumber   = {338317},
  mrreviewer = {Karl\ Sigmund},
  doi        = {10.2307/1996403},
  url        = {https://doi.org/10.2307/1996403}
}

@incollection{brin1983local,
  author     = {Brin, M. and Katok, A.},
  title      = {On local entropy},
  booktitle  = {Geometric dynamics ({R}io de {J}aneiro, 1981)},
  series     = {Lecture Notes in Math.},
  volume     = {1007},
  pages      = {30--38},
  publisher  = {Springer, Berlin},
  year       = {1983},
  isbn       = {3-540-12336-9},
  mrclass    = {58F11 (28D20)},
  mrnumber   = {730261},
  mrreviewer = {R.\ Dil\~ao},
  doi        = {10.1007/BFb0061408},
  url        = {https://doi.org/10.1007/BFb0061408}
}

@article{feng2012variational,
  author     = {Feng, De-Jun and Huang, Wen},
  title      = {Variational principles for topological entropies of subsets},
  journal    = {J. Funct. Anal.},
  fjournal   = {Journal of Functional Analysis},
  volume     = {263},
  year       = {2012},
  number     = {8},
  pages      = {2228--2254},
  issn       = {0022-1236,1096-0783},
  mrclass    = {37B40 (28D20 54H20)},
  mrnumber   = {2964682},
  mrreviewer = {Vaughn\ Climenhaga},
  doi        = {10.1016/j.jfa.2012.07.010},
  url        = {https://doi.org/10.1016/j.jfa.2012.07.010}
}

@book{fisher2019hyperbolic,
  author     = {Fisher, Todd and Hasselblatt, Boris},
  title      = {Hyperbolic flows},
  series     = {Zurich Lectures in Advanced Mathematics},
  publisher  = {EMS Publishing House, Berlin},
  year       = {2019},
  pages      = {xiv+723},
  isbn       = {978-3-03719-200-9},
  mrclass    = {37-02 (37A30 37A35 37D20 37D40)},
  mrnumber   = {3972204},
  mrreviewer = {Miguel\ Paternain},
  doi        = {10.4171/200},
  url        = {https://doi.org/10.4171/200}
}

@article{guckenheimer1979structural,
  author     = {Guckenheimer, John and Williams, R. F.},
  title      = {Structural stability of {L}orenz attractors},
  journal    = {Inst. Hautes \'Etudes Sci. Publ. Math.},
  fjournal   = {Institut des Hautes \'Etudes Scientifiques. Publications
                Math\'ematiques},
  number     = {50},
  year       = {1979},
  pages      = {59--72},
  issn       = {0073-8301,1618-1913},
  mrclass    = {58F13 (58F25)},
  mrnumber   = {556582},
  mrreviewer = {Zbigniew\ Nitecki},
  url        = {http://www.numdam.org/item?id=PMIHES_1979__50__59_0}
}

@article{hou2021entropy,
  title         = {Entropy of irregular points that are not uniformly hyperbolic},
  author        = {Xiaobo Hou and Xueting Tian},
  year          = {2021},
  eprint        = {2111.08325},
  archiveprefix = {arXiv},
  primaryclass  = {math.DS},
  url           = {https://arxiv.org/abs/2111.08325}
}

@article{li2005robustly,
  author     = {Li, Ming and Gan, Shaobo and Wen, Lan},
  title      = {Robustly transitive singular sets via approach of an extended
                linear {P}oincar\'e{} flow},
  journal    = {Discrete Contin. Dyn. Syst.},
  fjournal   = {Discrete and Continuous Dynamical Systems. Series A},
  volume     = {13},
  year       = {2005},
  number     = {2},
  pages      = {239--269},
  issn       = {1078-0947,1553-5231},
  mrclass    = {37D30 (37C10)},
  mrnumber   = {2152388},
  mrreviewer = {Carlos\ A.\ Morales},
  doi        = {10.3934/dcds.2005.13.239},
  url        = {https://doi.org/10.3934/dcds.2005.13.239}
}

@article{liang2016variational,
  author     = {Liang, Chao and Liao, Gang and Sun, Wenxiang and Tian,
                Xueting},
  title      = {Variational equalities of entropy in nonuniformly hyperbolic
                systems},
  journal    = {Trans. Amer. Math. Soc.},
  fjournal   = {Transactions of the American Mathematical Society},
  volume     = {369},
  year       = {2017},
  number     = {5},
  pages      = {3127--3156},
  issn       = {0002-9947,1088-6850},
  mrclass    = {37B40 (37C40 37D25)},
  mrnumber   = {3605966},
  mrreviewer = {Serge\ E.\ Troubetzkoy},
  doi        = {10.1090/tran/6780},
  url        = {https://doi.org/10.1090/tran/6780}
}

@article{liao2013entropy,
  author     = {Liao, Gang and Viana, Marcelo and Yang, Jiagang},
  title      = {The entropy conjecture for diffeomorphisms away from
                tangencies},
  journal    = {J. Eur. Math. Soc. (JEMS)},
  fjournal   = {Journal of the European Mathematical Society (JEMS)},
  volume     = {15},
  year       = {2013},
  number     = {6},
  pages      = {2043--2060},
  issn       = {1435-9855,1435-9863},
  mrclass    = {37G25 (37B10 37B40 37C20)},
  mrnumber   = {3120734},
  mrreviewer = {Robert\ Sta\'nczy},
  doi        = {10.4171/JEMS/413},
  url        = {https://doi.org/10.4171/JEMS/413}
}

@article{metzger2008sectional,
  author   = {Metzger, R. and Morales, C.},
  title    = {Sectional-hyperbolic systems},
  journal  = {Ergodic Theory Dynam. Systems},
  fjournal = {Ergodic Theory and Dynamical Systems},
  volume   = {28},
  year     = {2008},
  number   = {5},
  pages    = {1587--1597},
  issn     = {0143-3857,1469-4417},
  mrclass  = {37D30 (37C10 37C27)},
  mrnumber = {2449545},
  doi      = {10.1017/S0143385707000995},
  url      = {https://doi.org/10.1017/S0143385707000995}
}

@article{morales1999singular,
  author     = {Morales, C. A. and Pacifico, M. J. and Pujals, E. R.},
  title      = {Singular hyperbolic systems},
  journal    = {Proc. Amer. Math. Soc.},
  fjournal   = {Proceedings of the American Mathematical Society},
  volume     = {127},
  year       = {1999},
  number     = {11},
  pages      = {3393--3401},
  issn       = {0002-9939,1088-6826},
  mrclass    = {37D05 (37C70)},
  mrnumber   = {1610761},
  mrreviewer = {Rodrigo\ Bamon},
  doi        = {10.1090/S0002-9939-99-04936-9},
  url        = {https://doi.org/10.1090/S0002-9939-99-04936-9}
}

@article{morales2004robust,
  author     = {Morales, C. A. and Pacifico, M. J. and Pujals, E. R.},
  title      = {Robust transitive singular sets for 3-flows are partially
                hyperbolic attractors or repellers},
  journal    = {Ann. of Math. (2)},
  fjournal   = {Annals of Mathematics. Second Series},
  volume     = {160},
  year       = {2004},
  number     = {2},
  pages      = {375--432},
  issn       = {0003-486X,1939-8980},
  mrclass    = {37D30 (37C10 37D45 37E99)},
  mrnumber   = {2123928},
  mrreviewer = {Ant\'onio\ Pumari\~no},
  doi        = {10.4007/annals.2004.160.375},
  url        = {https://doi.org/10.4007/annals.2004.160.375}
}

@article{pacifico2022bowen,
  author     = {Pacifico, Maria Jos\'{e} and Sanhueza, Diego},
  title      = {On {B}owen's entropy inequality and almost specification for
                flows},
  journal    = {J. Differential Equations},
  fjournal   = {Journal of Differential Equations},
  volume     = {335},
  year       = {2022},
  pages      = {43--68},
  issn       = {0022-0396,1090-2732},
  mrclass    = {37B40},
  mrnumber   = {4454276},
  mrreviewer = {Zeya\ Mi},
  doi        = {10.1016/j.jde.2022.06.028},
  url        = {https://doi.org/10.1016/j.jde.2022.06.028}
}

@article{pacifico2025equilibrium,
  author   = {Pacifico, Maria Jose and Yang, Fan and Yang, Jiagang},
  title    = {Equilibrium states for the classical {L}orenz attractor and
              sectional-hyperbolic attractors in higher dimensions},
  journal  = {Duke Math. J.},
  fjournal = {Duke Mathematical Journal},
  volume   = {174},
  year     = {2025},
  number   = {10},
  pages    = {1901--2010},
  issn     = {0012-7094,1547-7398},
  mrclass  = {37D35 (37C40)},
  mrnumber = {4941878},
  doi      = {10.1215/00127094-2024-0067},
  url      = {https://doi.org/10.1215/00127094-2024-0067}
}

@book{palis1982geometric,
  author     = {Palis, Jr., Jacob and de Melo, Welington},
  title      = {Geometric theory of dynamical systems},
  note       = {An introduction,
                Translated from the Portuguese by A. K. Manning},
  publisher  = {Springer-Verlag, New York-Berlin},
  year       = {1982},
  pages      = {xii+198},
  isbn       = {0-387-90668-1},
  mrclass    = {58-01 (34C35 58F09 58Fxx)},
  mrnumber   = {669541},
  mrreviewer = {Russell\ B.\ Walker}
}

@article{pfister2005large,
  author     = {Pfister, C.-E. and Sullivan, W. G.},
  title      = {Large deviations estimates for dynamical systems without the
                specification property. {A}pplications to the
                {$\beta$}-shifts},
  journal    = {Nonlinearity},
  fjournal   = {Nonlinearity},
  volume     = {18},
  year       = {2005},
  number     = {1},
  pages      = {237--261},
  issn       = {0951-7715,1361-6544},
  mrclass    = {37A45 (11K55 37A25 37A50 37C35 37D35 60F10)},
  mrnumber   = {2109476},
  mrreviewer = {Yves\ Lacroix},
  doi        = {10.1088/0951-7715/18/1/013},
  url        = {https://doi.org/10.1088/0951-7715/18/1/013}
}

@article{pfister2007topological,
  author     = {Pfister, C.-E. and Sullivan, W. G.},
  title      = {On the topological entropy of saturated sets},
  journal    = {Ergodic Theory Dynam. Systems},
  fjournal   = {Ergodic Theory and Dynamical Systems},
  volume     = {27},
  year       = {2007},
  number     = {3},
  pages      = {929--956},
  issn       = {0143-3857,1469-4417},
  mrclass    = {37B40},
  mrnumber   = {2322186},
  mrreviewer = {Godofredo\ Iommi},
  doi        = {10.1017/S0143385706000824},
  url        = {https://doi.org/10.1017/S0143385706000824}
}

@article{shi2023multifractal,
  author     = {Shi, Yi and Tian, Xueting and Varandas, Paulo and Wang,
                Xiaodong},
  title      = {On multifractal analysis and large deviations of singular
                hyperbolic attractors},
  journal    = {Nonlinearity},
  fjournal   = {Nonlinearity},
  volume     = {36},
  year       = {2023},
  number     = {10},
  pages      = {5216--5251},
  issn       = {0951-7715,1361-6544},
  mrclass    = {37D25 (37A50 37D30 37D35)},
  mrnumber   = {4646022},
  mrreviewer = {Yuan-Ling\ Ye},
  doi        = {10.1088/1361-6544/ace491},
  url        = {https://doi.org/10.1088/1361-6544/ace491}
}

@article{shi2025connecting,
  title   = {A {$C^r$}-CONNECTING LEMMA FOR {L}ORENZ ATTRACTORS AND ITS APPLICATION ON THE SPACE OF ERGODIC MEASURES},
  doi     = {10.1017/S147474802510114X},
  journal = {Journal of the Institute of Mathematics of Jussieu},
  author  = {Shi, Yi and Tian, Xueting and Wang, Xiaodong},
  year    = {2025},
  pages   = {1–44}
}

@book{villani2009optimal,
  author     = {Villani, C\'edric},
  title      = {Optimal transport},
  series     = {Grundlehren der mathematischen Wissenschaften [Fundamental
                Principles of Mathematical Sciences]},
  volume     = {338},
  note       = {Old and new},
  publisher  = {Springer-Verlag, Berlin},
  year       = {2009},
  pages      = {xxii+973},
  isbn       = {978-3-540-71049-3},
  mrclass    = {49-02 (28A75 37J50 49Q20 53C23 58E30)},
  mrnumber   = {2459454},
  mrreviewer = {Dario\ Cordero-Erausquin},
  doi        = {10.1007/978-3-540-71050-9},
  url        = {https://doi.org/10.1007/978-3-540-71050-9}
}

@article{williams1979structure,
  author     = {Williams, R. F.},
  title      = {The structure of {L}orenz attractors},
  journal    = {Inst. Hautes \'Etudes Sci. Publ. Math.},
  fjournal   = {Institut des Hautes \'Etudes Scientifiques. Publications
                Math\'ematiques},
  number     = {50},
  year       = {1979},
  pages      = {73--99},
  issn       = {0073-8301,1618-1913},
  mrclass    = {58F13 (58F25)},
  mrnumber   = {556583},
  mrreviewer = {Zbigniew\ Nitecki},
  url        = {http://www.numdam.org/item?id=PMIHES_1979__50__73_0}
}

@article {afraimovich1977origin,
    AUTHOR = {Afraimovich, V. S. and Bykov, V. V. and Shil'nikov, L. P.},
     TITLE = {The origin and structure of the {Lorenz} attractor},
   JOURNAL = {Dokl. Akad. Nauk SSSR},
  FJOURNAL = {Doklady Akademii Nauk SSSR},
    VOLUME = {234},
      YEAR = {1977},
    NUMBER = {2},
     PAGES = {336--339},
      NOTE = {(Russian)},
}

@incollection {guckenheimer1976strange,
    AUTHOR = {Guckenheimer, John},
     TITLE = {A strange, strange attractor},
 BOOKTITLE = {The {Hopf} Bifurcation and Its Applications},
    SERIES = {Applied Mathematical Sciences},
    VOLUME = {19},
 PUBLISHER = {Springer-Verlag},
   ADDRESS = {New York},
      YEAR = {1976},
     PAGES = {368--381},
       DOI = {10.1007/978-1-4612-6374-6_25},
       URL = {https://doi.org/10.1007/978-1-4612-6374-6_25},
}

@article {lorenz1963deterministic,
    AUTHOR = {Lorenz, Edward N.},
     TITLE = {Deterministic nonperiodic flow},
   JOURNAL = {J. Atmospheric Sci.},
  FJOURNAL = {Journal of the Atmospheric Sciences},
    VOLUME = {20},
      YEAR = {1963},
    NUMBER = {2},
     PAGES = {130--141},
      ISSN = {0022-4928,1520-0469},
       DOI = {10.1175/1520-0469(1963)020<0130:DNF>2.0.CO;2},
       URL = {https://doi.org/10.1175/1520-0469(1963)020<0130:DNF>2.0.CO;2},
}

@article {zhu2008indices,
    AUTHOR = {Zhu, Shengzhi and Gan, Shaobo and Wen, Lan},
     TITLE = {Indices of singularities of robustly transitive sets},
   JOURNAL = {Discrete Contin. Dyn. Syst.},
  FJOURNAL = {Discrete and Continuous Dynamical Systems},
    VOLUME = {21},
      YEAR = {2008},
    NUMBER = {3},
     PAGES = {945--957},
      ISSN = {1078-0947,1553-5231},
   MRCLASS = {37D30},
       DOI = {10.3934/dcds.2008.21.945},
       URL = {https://doi.org/10.3934/dcds.2008.21.945},
}

@article{huang2019transitively,
  author  = {Huang, Yu and Tian, Xueting and Wang, Xiaoyi},
  title   = {Transitively-saturated property, {Banach} recurrence and
             {Lyapunov} regularity},
  journal = {Nonlinearity},
  volume  = {32},
  year    = {2019},
  number  = {7},
  pages   = {2721--2757},
  doi     = {10.1088/1361-6544/ab090c}
}

@article{WenWen2020,
  author  = {Wen, Xiao and Wen, Lan},
  title   = {No-shadowing for singular hyperbolic sets with a singularity},
  journal = {Discrete and Continuous Dynamical Systems},
  volume  = {40},
  number  = {10},
  pages   = {6043--6059},
  year    = {2020},
  doi     = {10.3934/dcds.2020258},
  url     = {https://doi.org/10.3934/dcds.2020258}
}

@article{sumi2015specification,
  author  = {Sumi, Naoya and Varandas, Paulo and Yamamoto, Kenichiro},
  title   = {Specification and partial hyperbolicity for flows},
  journal = {Dynamical Systems},
  volume  = {30},
  number  = {4},
  pages   = {501--524},
  year    = {2015},
  doi     = {10.1080/14689367.2015.1081380},
  url     = {https://doi.org/10.1080/14689367.2015.1081380}
}
\end{document}